 \documentclass[leqno]{amsart}
\usepackage[T1]{fontenc}
\usepackage{amsfonts,amssymb,amsmath,amsgen,amsthm}
\usepackage{hyperref,color}
\usepackage{pdfsync}
\usepackage{mathtools}
\usepackage{orcidlink}
\makeatletter
\theoremstyle{plain}
\newtheorem{thm}{\protect\theoremname}
\theoremstyle{definition}
\newtheorem{defn}[thm]{\protect\definitionname}
\theoremstyle{remark}
\newtheorem{rem}[thm]{\protect\remarkname}
\theoremstyle{plain}
\newtheorem{lem}[thm]{\protect\lemmaname}
\theoremstyle{plain}
\newtheorem{prop}[thm]{\protect\propositionname}

\makeatother

\usepackage{babel}
\providecommand{\definitionname}{Definition}
\providecommand{\lemmaname}{Lemma}
\providecommand{\propositionname}{Proposition}
\providecommand{\remarkname}{Remark}
\providecommand{\theoremname}{Theorem}

\def\R{{\mathbf R}}
\def\T{{\mathbb T}}
\def\N{{\mathbf N}}
\def\Z{{\mathbf Z}}
\def\d{{\partial}}
\def\eps{\varepsilon}
\DeclareMathOperator{\RE}{Re}
\DeclareMathOperator{\IM}{Im}
\DeclareMathOperator{\diver}{div}

\numberwithin{equation}{section}

\date\today

\title{Large-Norm Solutions and the Relaxation-Time Limit for Quantum Hydrodynamics on the Two-Dimensional Torus}

\author[H. Zheng]{Hao Zheng\,\orcidlink{0000-0002-7098-7731}}
\address{Chinese Academy of Science, Zhongguancun Est. Rd., Haidian District, Beijing, China, 100190}
\email{zhenghao@amss.ac.cn}

\subjclass{Primary: 35Q81, 35Q35; Secondary: 35Q55, 76Y05.}
 \keywords{quantum hydrodynamics, relaxation-time limit}

\begin{document}
\begin{abstract}
This paper extends to the two-dimensional torus our previous analysis \cite{AMZ3} of weak solutions with large norms for the collisional quantum hydrodynamic (QHD) system in semiconductor modeling.

We first establish the global well-posedness of weak solutions with strictly positive density within the functional framework of generalized chemical potential (GCP) solutions introduced in \cite{AMZ1}. Two key ingredients of the analysis are a logarithmic Sobolev-type inequality controlling oscillations of the density and a functional combining a higher-order energy with the physical entropy. This combined functional yields a coercive dissipation mechanism that allows us to establish stability and exponential convergence for solutions with large initial data. As a byproduct of our approach, we also prove the global existence of $H^2$ solutions for a nonlinear Schr\"odinger--Langevin equation.

Finally, for GCP solutions with strictly positive density, we justify the relaxation-time limit and provide an explicit convergence rate. Our analysis relies on compactness techniques that do not require the existence or smoothness of solutions to the limiting system. Moreover, our results impose no well-preparedness assumptions on the initial data, thereby accommodating the possible formation of an initial layer.
\end{abstract}
\maketitle

\section{Introduction and main results}

In this paper, we focus on weak solutions with large norms for the quantum hydrodynamic (QHD) system arising in semiconductor device modeling on the two-dimensional torus, and on their relaxation-time limit. The quantum hydrodynamic system is given by the following hydrodynamic equations:
\begin{equation}\label{eq:QHD}
\begin{cases}
\d_{t}\rho+\diver J=0\\
\d_{t}J+\diver\left(\frac{J\otimes J}{\rho}\right)+\nabla p(\rho)+\rho\nabla V=\frac{1}{2}\rho\nabla\left(\frac{\triangle\sqrt\rho}{\sqrt\rho}\right)-\frac{1}{\tau}J\\
-\triangle V=\rho-\mathcal{C}(x),\quad (\rho,J)(0,x)=(\rho_0,J_0)(x),
\end{cases}
\end{equation}
for $(t,x)\in [0,\infty)\times \T^2$. Here $\T^2$ denotes the two-dimensional torus, which we assume, without loss of generality, to be $\T^2=\R^2/\Z^2$. The quantum term can be equivalently written as
\begin{equation}\label{eq:bohm}
\frac12\rho\nabla\left(\frac{\triangle\sqrt{\rho}}{\sqrt{\rho}}\right)=\frac14\diver(\rho\nabla^2\log\rho)=\frac14\nabla\triangle\rho-\diver(\nabla\sqrt{\rho}\otimes\nabla\sqrt{\rho}),
\end{equation}
where the second expression corresponds to a quantum stress tensor, and the last identity exhibits the dispersive effect of the quantum term together with the nonlinear effect of Fisher information \cite{HC}. 

The quantum hydrodynamic model \eqref{eq:QHD} finds application in semiconductor device modeling \cite{J}. In this context, the function $V$ represents a self-consistent electric potential governed by the Poisson equation, while $\mathcal{C}(x)$ denotes the density of stationary positive background charges. Regarding the origin of the collisional term in semiconductor physics, its derivation can proceed via a phenomenological approach, accounting for thermal interactions among charge carriers (see \cite{AI,AT,G} or Section 7 in \cite{AM_b}). Alternatively, it can be obtained from kinetic models as an approximation for intra-band collision processes \cite{BW}. For mathematical studies of quantum hydrodynamics in semiconductor modeling, we refer to \cite{Guo} and references therein. 

Such quantum fluid models are also applied to diverse physical systems, particularly in regimes where the thermal de Broglie wavelength is of the order of the interatomic distance. Relevant examples include superfluidity in helium II \cite{K}, Bose-Einstein condensates \cite{PS}, and quantum plasmas \cite{H}. Furthermore, the collisional model \eqref{eq:QHD} with $\tau>0$ serves as a toy model for examining interactions between quantum and classical fluids. This is conceptually related to the two-fluid model introduced by Landau and Tisza; see the discussions in \cite[Section 17]{K} and \cite[Chapter XVI]{LL}. 

The main purpose of this paper is to establish the global well-posedness of weak solutions of \eqref{eq:QHD} with large norms and strictly positive density $\rho$. Moreover, we will study the relaxation-time limit of the system \eqref{eq:QHD}. By introducing the scaling \cite{DM,MMS,MN}
\begin{equation}\label{eq:rs_intro}
t'=\tau t,\quad (\rho_\tau,J_\tau)(t',x)=\left(\rho,\frac{1}{\tau}J\right)\left(\frac{t'}{\tau},x\right),
\end{equation}
the system \eqref{eq:QHD} can be reformulated as
\begin{equation}\label{eq:QHD_rs_intro}
\left\{\begin{aligned}
&\d_{t'}\rho_\tau+\diver J_\tau=0\\
&\tau^2\d_{t'} J_\tau+\tau^2\diver\left(\frac{J_\tau\otimes J_\tau}{\rho_\tau}\right)+\nabla p(\rho_\tau)+\rho_\tau\nabla V_\tau=\frac{1}{2}\rho_\tau\nabla\left(\frac{\triangle\sqrt\rho_\tau}{\sqrt\rho_\tau}\right)-J_\tau\\
&-\triangle V_\tau=\rho_\tau-\mathcal{C}(x),\quad (\rho_\tau,J_\tau)(0,x)=(\rho_0,J_{\tau,0})(x),
\end{aligned}\right.
\end{equation}
As $\tau\to 0$, the system \eqref{eq:QHD_rs_intro} formally converges to following the quantum drift--diffusion (QDD) system
\begin{equation}\label{eq:qdde}
\begin{cases}
\d_{t'}\bar\rho+\diver\left[\frac{1}{2}\bar\rho\nabla\left(\frac{\triangle\sqrt{\bar\rho}}{\sqrt{\bar\rho}}\right)-\nabla p(\bar\rho)-\bar\rho\nabla\bar{V}\right]=0\\
-\triangle\bar V=\bar\rho-\mathcal{C}(x),\quad \bar\rho(0,x)=\rho_0(x),
\end{cases}
\end{equation}
see for instance \cite{J, R}. We also rigorously justify this limit in a suitable functional framework that will be described below.

A natural framework for studying the system \eqref{eq:QHD} is the class of solutions with finite mass and finite energy, characterised by the conserved total mass
\begin{equation*}
M(t)=\int_{\T^2} \rho(t)dx\equiv M_0,
\end{equation*}
and the total energy functional
\begin{equation}\label{eq:en}
E(t)=\int_{\T^2} \frac12|\nabla\sqrt\rho|^2+\frac{|J|^2}{2\rho}+f(\rho)+\frac12|\nabla V|^2dx.
\end{equation}
In the collisionless case ($\tau=\infty$), $E(t)$ is the conserved energy associated with the Hamiltonian QHD system, while in the case $0<\tau<\infty$, it satisfies the corresponding energy balance law: 
\begin{equation}\label{eq:en_disp_0}
E(t)+\frac{1}{\tau}\int_0^t\int_{\T^2} \frac{|J|^2}{\rho}dxds=E(0).
\end{equation}
The internal energy $f(\rho)$ is related to the pressure $p(\rho)$ via the relation
\begin{equation}\label{eq:pres}
p(\rho)=f'(\rho)\rho-f(\rho),
\end{equation}
or equivalently
\[
f(\rho)=\rho\int^\rho \frac{p(s)}{s^2}ds.
\]
In this paper, we consider the family of pressure laws generated by
\begin{equation}\label{eq:gamma-2n}
f(\rho)=\frac{1}{2n}(\rho-M_0)^{2n},\quad n\in \N_+.
\end{equation}
For the Poisson equation in \eqref{eq:QHD} governing the electric potential $V$, we also assume $\mathcal{C}(x)=M_0$, which guarantees its solvability. 

The existence of finite-energy weak solutions has been established by combining the results of \cite{AM1, AM2} and \cite{AMZ1}. However, the uniform bounds provided solely by the energy functional are not sufficient to establish a compactness framework for \eqref{eq:QHD_rs_intro}. Indeed, while the inertial term can be controlled by the scaled energy dissipation, the weak limit of the quadratic term $(\nabla\sqrt{\rho_\tau}\otimes \nabla\sqrt{\rho_\tau})$ (see \eqref{eq:bohm} above) may be affected by strong oscillations and concentration phenomena, which cannot be controlled by the energy bounds alone.

Therefore, in order to overcome this difficulty, we apply the compactness framework developed in the authors' previous work \cite{AMZ1} and introduce the notion of weak solutions with bounded generalized chemical potential (see Definition \ref{def:GCPsln} below), hereafter referred to as \emph{GCP solutions}. The chemical potential is formally defined as
\[
\mu=-\frac{\triangle\sqrt\rho}{2\sqrt\rho}+\frac12\frac{|J|^2}{\rho^2}+f'(\rho)+V,
\]
which corresponds formally to the first variation of the total energy functional with respect to the mass density,
\[
\mu=\frac{\delta E}{\delta \rho}.
\]
Indeed, by introducing $v=\frac{J}{\rho}$ and noticing that $J=\frac{\delta E}{\delta v}$, equations \eqref{eq:QHD} can be formally written as the following system (Hamiltonian system with a damping term) associated with the energy functional
\begin{equation}\label{eq:Ham_form}
\left\{\begin{aligned}
&\d_t\rho=-\diver \frac{\delta E}{\delta v}\\
&\d_t v=-\nabla \frac{\delta E}{\delta \rho}-\frac{1}{\tau}v
=-\nabla\mu-\frac1\tau v.
\end{aligned}\right.
\end{equation}
The preceding quantities allow us to define the GCP functional
\begin{equation}\label{eq:higher}
I(t)=\int_{\T^2} \frac{\rho}{2}(\mu^2+\sigma^2)dx,
\end{equation}
where 
\[
\sigma=\d_t\log\sqrt\rho=-\frac{\diver J}{2\rho}.
\]

The functional $I(t)$ characterizes higher-order regularity for solutions of \eqref{eq:QHD}. In \cite{AMZ1,AMZ2}, it is shown that $I(t)$ remains uniformly bounded on compact time intervals for the collisionless QHD system. Moreover, a compactness class for weak solutions to QHD is identified in the framework of $I(t)$. 
Following the terminology of \cite{AMZ1,AMZ2}, we refer to weak solutions with finite energy (see \eqref{eq:en}) and finite GCP functional (defined in \eqref{eq:higher}) as GCP solutions, see also Definition \ref{def:GCPsln}.
Unlike \cite{AMZ1,AMZ2} where vacuum regions are allowed, 
our analysis requires considering solutions whose density remains uniformly bounded away from vacuum.
Indeed, by using \eqref{eq:QHD}, the time derivative of $I(t)$ is formally given by
\begin{equation}\label{eq:dtI_intro}
\frac{d}{dt} I(t)+\frac{1}{\tau}\int_{\T^2}\rho \sigma^2 dx=\int_{\T^2} \mu\d_tp(\rho)dx+\int_{\T^2}\rho\mu \d_tVdx-\frac{1}{\tau}\int_{\T^2}\rho |v|^2\mu dx.
\end{equation}
In particular, the last term on the right-hand side of \eqref{eq:dtI_intro} appears due to the collision term. Strong density fluctuations near vacuum may cause the chemical potential to become highly singular and may lead to a loss of integrability. Therefore in this paper we restrict to strictly positive densities. A rigorous derivation of $\frac{d}{dt}I(t)$ will be given in Proposition \ref{prop:3.2}.

The GCP solution framework was successfully applied to \eqref{eq:QHD} in the one-dimensional case in \cite{AMZ3}, where the strict positivity of the mass density $\rho$ is ensured by combining the Poincar\'e inequality with the energy balance law \eqref{eq:en_disp_0}. In higher dimensions, however, the energy estimate alone proves insufficient to regulate the lower bound of $\rho$. To overcome this difficulty, we introduce a logarithmic Sobolev-type inequality (Lemma \ref{lem:logsob}), which controls the oscillation of $\rho$ in terms of $E(t)$ and $I(t)$. In order to maintain strict positivity of $\rho$, we impose the condition \eqref{eq:cond_2} on the initial energy $E(0)$ and the initial GCP functional $I(0)$. We emphasize, nevertheless, that this condition does not exclude initial data with large norms; see Remark \ref{rem:r1}.

\subsection{Main result: global well-posedness of GCP solutions and stability}

The first result of this paper concerns the global well-posedness of GCP weak solutions with strictly positive density. Let us first put this result in perspective with the existing smooth theory for QHD systems. Global smooth solutions and related stability results have been obtained for small perturbations of an equilibrium; see, for instance, \cite{RaHong,JLi,LiMarcati,LZZ} and the references therein. In these works, the smallness of the initial data plays a crucial role in controlling the nonlinear interactions. The restriction imposed in Theorem \ref{thm:glob2}, however, is of a different nature. Although condition \eqref{eq:cond_2} involves the initial energy $E_0$ and the initial GCP functional $I_0$, its right-hand side tends to infinity as the total mass $M_0$ tends to infinity, for every fixed $\delta>0$ (see Remark \ref{rem:r1}). Consequently, \eqref{eq:cond_2} does not impose a uniform smallness assumption on the initial norms. Rather, it allows initial data with arbitrarily large GCP norm, provided that the mass is sufficiently large. The key point is that our argument does not rely on a generic smallness condition to absorb the nonlinear terms. Instead, we identify a dissipative structure arising from the combined use of the higher-order energy and the physical entropy, which makes it possible to control the nonlinear interactions. The only remaining mechanism that may lead to a singularity is the formation of vacuum; this is precisely why the positivity condition encoded in \eqref{eq:cond_2} is imposed.

Recall that $M_0=\int_{\T^2}\rho_0dx<\infty$ is the conserved total mass, and denote by $E_0=E(0)$ and $I_0=I(0)$ the total energy and the higher-order functional at the initial time, defined by \eqref{eq:en} and \eqref{eq:higher}, respectively.

\begin{thm}[Global well-posedness of GCP weak solutions]\label{thm:glob2}
Let us consider a finite-energy initial datum $(\rho_0, J_0)$ satisfying the following conditions.
\begin{itemize}
\item The initial density satisfies $\inf_x\rho_0\geq \delta$ for some $0<\delta<M_0$. Let us denote by $v_0=J_0/\rho_0$ the initial velocity. We assume that  $\operatorname{curl}v_0=0$ and
\begin{equation}\label{eq:cond_1}
\int_\T v_0(y,x_2) dy=\int_\T v_0(x_1,y) dy=0
\end{equation}
for almost every $(x_1,x_2)\in\T^2$.

\item For a universal constant $\eps_0>0$, independent of the initial data, assume that
\begin{equation}\label{eq:cond_2}
e^{E_0}\cdot (1+I_0)\leq \eps_0\frac{e^{M_0-\delta}}{(M_0-\delta)^{2n}}.
\end{equation}
\end{itemize}
Then there exists $\tau^*>0$, depending on $(\delta,M_0,E_0,I_0)$, such that for $0<\tau\leq \tau^*$, the Cauchy problem for the QHD system \eqref{eq:QHD} has a unique global-in-time GCP solution $(\rho,J)$ with the following properties for every $0\leq t<\infty$:
\begin{itemize}
\item the density remains strictly positive with lower bound $\inf_{t,x}\rho\geq \delta$;
\item the total energy $E(t)$ satisfies the energy balance law, namely for every $t>0$,  
\begin{equation}\label{eq:en_disp_intro}
E(t)+\frac{1}{\tau}\int_0^t\int_{\T^2} \rho |v|^2 dxds=E_0;
\end{equation}
\item the higher-order functional $I(t)$ satisfies the bounds
\begin{equation}\label{eq:bd_I}
I(t)+\frac{1}{\tau}\int_0^t\int_{\T^2} [(\d_t\sqrt\rho)^2+\rho |v|^4 ]dxds\leq C(\delta,M_0,E_0,I_0).
\end{equation}
\end{itemize}
\end{thm}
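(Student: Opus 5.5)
The proof will be a continuity (bootstrap) argument built on a local well-posedness theory for GCP solutions whose density stays away from vacuum. Such a theory should be available in the irrotational, positive-density setting via the Madelung transform. Since $\operatorname{curl} v_0=0$, the zero-mean condition \eqref{eq:cond_1} lets us write $v_0=\nabla S_0$ with $S_0$ single-valued on $\T^2$. Then $\psi=\sqrt\rho\, e^{iS}$ solves a nonlinear Schr\"odinger--Langevin equation whose damping term is $\frac1\tau(\log\frac{\psi}{|\psi|})\psi$, suitably normalised. Finite $E$ and $I$ together with $\rho\ge\delta$ correspond to $\psi\in H^2$. Local $H^2$ existence and uniqueness for this equation, valid as long as $|\psi|$ stays bounded below, gives local existence and uniqueness of GCP solutions. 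The main task is therefore to propagate the a priori bounds
\[
\inf_x\rho(t)\ge\delta,\qquad I(t)\le C(\delta,M_0,E_0,I_0)
\]
for all time.

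The energy balance \eqref{eq:en_disp_intro} comes directly from the equation. For $I$, I will use the identity \eqref{eq:dtI_intro}, made rigorous in Proposition \ref{prop:3.2}, and estimate its right-hand side.

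The term $\frac1\tau\int\rho|v|^2\mu$ is the dangerous one. I plan to combine $I$ with the physical entropy, either $\int \rho\log(\rho/M_0)$ or the internal energy relative to $M_0$. Its time derivative produces $-\int\rho|v|^2$-type and $\frac1\tau\int\rho v\cdot\nabla(\cdots)$ terms, so that the combined functional $\mathcal F=I+\frac{\kappa}{\tau}(\text{entropy}+E)$ has a coercive dissipation
\[
\frac1\tau\int\bigl[(\d_t\sqrt\rho)^2+\rho|v|^4\bigr]\,dx .
\]
Concretely, I will rewrite $\rho|v|^2\mu$ by substituting $\mu$ from the momentum equation, $\nabla\mu=-\d_tv-\frac1\tau v$. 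After integrating by parts, this produces $-\frac1\tau\int\rho|v|^4$ plus time derivatives that are absorbed into $\mathcal F$. The quantum part of $\mu$ is handled through $\rho\ge\delta$. The pressure and Poisson terms $\int\mu\,\d_tp$ and $\int\rho\mu\,\d_tV$ are bounded by $C(\delta,M_0)\,I^{1/2}(E+I)^{1/2}$-type quantities, using $\d_t\rho=-2\rho\sigma$, elliptic estimates for $V$ on $\T^2$, and Gagliardo--Nirenberg. Choosing $\tau\le\tau^*$ small makes the $\frac1\tau$ dissipation dominate these $O(1)$ terms. This yields $\mathcal F(t)\le C$, hence \eqref{eq:bd_I}, and uniqueness follows from the $H^2$ theory.

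The main obstacle is keeping $\rho\ge\delta$, and this is where \eqref{eq:cond_2} enters. By Lemma \ref{lem:logsob}, the oscillation of $\rho$ is controlled by $E$ and $I$ in a logarithmic form. Roughly, $M_0-\inf\rho\lesssim$ something whose exponential is bounded by $e^{E}(1+I)(M_0-\delta)^{2n}$, where the factor $(M_0-\delta)^{2n}$ arises from $f$. Since $E(t)\le E_0$ and $I(t)$ stays bounded by a constant times $(1+I_0)$ under the dissipation, $\rho(t)$ could only reach the level $\delta$ if $e^{E(t)}(1+I(t))$ exceeded $\eps_0 e^{M_0-\delta}/(M_0-\delta)^{2n}$. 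I will set up the bootstrap with the weaker hypothesis $\rho\ge\delta/2$ and show it improves to $\rho\ge\delta$, choosing $\eps_0$ to absorb the universal constants from the lemma and the $I$-estimate. The delicate point is arranging that the constant in the $I$ bound under the bootstrap depends on the data only through a factor comparable to $1+I_0$, uniformly in small $\tau$, so that the smallness $\eps_0$ closes the argument. If that fails, I would restrict to $\tau\le\tau^*$ so that $I(t)\le 2I_0+1$ exactly.
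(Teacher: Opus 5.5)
Your outline (lifting to the Schr\"odinger--Langevin equation, local $H^2$ theory while $|\psi|$ stays away from zero, a functional combining $I$, $E$ and the entropy, positivity via Lemma \ref{lem:logsob}) matches the paper's. However, the central a priori estimate does not close as you describe it, for two reasons.

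First, the term $-\frac1\tau\int\rho|v|^2\mu\,dx$ cannot be treated by substituting $\nabla\mu=-\d_tv-\frac1\tau v$. Since $\int\rho|v|^2dx>0$, this integral changes under $\mu\mapsto\mu+c$, so no integration by parts expresses it through $\nabla\mu$ alone. The paper instead expands $\rho\mu=-\frac14\triangle\rho+\frac12|\nabla\sqrt\rho|^2+\frac12\rho|v|^2+(f'(\rho)+V)\rho$. The kinetic part of $\mu$ directly gives the good term $-\frac1{2\tau}\int\rho|v|^4dx$. The remaining indefinite pieces are also of size $1/\tau$, so no smallness of $\tau$ absorbs them. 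They are bounded by a small multiple of $\frac1\tau\int\rho\sigma^2dx$ plus $C(\delta,M_0,E_0)(1+I)\frac1\tau\int\rho|v|^2dx$, using:
\begin{itemize}
\item integration by parts;
\item $\operatorname{curl}v=0$ (Helmholtz), to control $\nabla(\sqrt\rho v)$ by $\sqrt\rho\sigma$ and $|\nabla\sqrt\rho||v|$;
\item Gagliardo--Nirenberg with $\rho\ge\delta$;
\item $\|\triangle\psi\|_{L^2_x}^2\le C_0(I+E)$ from the lifting.
\end{itemize}
The key point is that the coefficient $\frac1\tau\int\rho|v|^2dx$ has time integral at most $E_0$ by the energy law. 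Gronwall therefore costs only a factor $e^{C_0E_0}$, uniformly in $t$ and $\tau$ (Proposition \ref{prop:bdI}).

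Second, the pressure and Poisson terms, after Young's inequality against $\frac1{4\tau}\int\rho\sigma^2dx$, leave $C\tau\int\rho\mu^2dx$, which is of order $\tau I$. None of the dissipations you list controls it: $\sqrt\rho\sigma$ and $\rho|v|^4$ say nothing about the $\triangle\sqrt\rho/\sqrt\rho$ part of $\mu$. Even with the first point repaired, taking $\tau$ small yields at best $1+I(t)\le(1+I_0)e^{C_0E_0+C\tau t}$. The bound on $\|\rho-M_0\|_{L^\infty_x}$ from Lemma \ref{lem:logsob} then degrades as $t\to\infty$, so the argument cannot keep $\|\rho-M_0\|_{L^\infty_x}\le M_0-\delta$ for all time. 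No fixed $\tau>0$ helps, and neither does your fallback $I(t)\le 2I_0+1$.

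The missing ingredient is the actual dissipation of the entropy. Inserting the momentum equation for $J$ into $\frac{d}{dt}H=\int\nabla\rho\cdot v\,dx$ gives (Proposition \ref{prop:dtentr}) the DLSS-type dissipation $\tau\int\rho|\nabla^2\log\sqrt\rho|^2dx+4\tau\int p'(\rho)|\nabla\sqrt\rho|^2dx+\tau\int(\rho-M_0)^2dx$. This comes with errors $4\tau\int\rho\sigma^2dx+2\tau\int\rho|v|^4dx$, absorbed by the $\frac1\tau$ dissipation of $c_1I$ for small $\tau$, and a correction $\tau\int\log\rho\,\d_t\rho\,dx$ bounded by $2\tau E$.

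This dissipation, together with $\frac1\tau\int\rho|v|^4dx$, controls $\tau\int\rho\mu^2dx$. The tools are Lemma \ref{lem:H^2log}, the pointwise bound of $\rho\mu^2$ by a constant times $(\triangle\sqrt\rho)^2+\rho|v|^4+(f'(\rho)^2+V^2)\rho$, Gagliardo--Nirenberg for $\int f'(\rho)^2\rho\,dx$, and Poincar\'e for $V$. Combined with Lemma \ref{lem:logineq}, it also dominates $c_2\tau F$ for $F=H+E+c_1I$. That coercivity gives $F(t)\le F(0)\exp(C_0E_0-c_2\tau t)$, hence a time-uniform $H^2$ bound on $\psi$. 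Only then does \eqref{eq:cond_2} close the bootstrap on $\||\psi|^2-M_0\|_{L^\infty_{t,x}}\le M_0-\delta$. That bootstrap also supplies the upper bound on $\rho$ needed for $\|p'(\rho)\|_{L^\infty_x}$ and $\|f'(\rho)+V\|_{L^\infty_x}$, which an assumption $\rho\ge\delta/2$ alone does not.
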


\begin{rem}\label{rem:r1}
The details of the condition \eqref{eq:cond_2} and the constant $\eps_0>0$ are provided in Section \ref{sect:exist2d} through the proof of Theorem \ref{thm:glob2}; see, for example, \eqref{eq:dep_2d}. In particular, for fixed $\delta>0$, the right-hand side of \eqref{eq:cond_2} tends to infinity as $M_0\to\infty$.
\end{rem}

Our proof of Theorem \ref{thm:glob2} is motivated by the formal equivalence between system \eqref{eq:QHD} and the following Schr\"odinger--Langevin-type equation:
\begin{equation}\label{eq:NLS_intro}
i\d_t\psi+\frac12\triangle\psi=f'(|\psi|^2)\psi+\frac{1}{\tau}S\psi+V\psi,
\end{equation}
see also \cite{JMR}. Here $S$ denotes the phase associated with the Madelung representation $\psi=|\psi|e^{iS}$ and can be formally written as
\[
S=\frac{1}{i}\log\left(\frac{\psi}{|\psi|}\right).
\]
The hydrodynamic variables are recovered from $\psi$ by
\[
\rho=|\psi|^2,\qquad J=\IM(\bar\psi\nabla\psi).
\]
Then the hydrodynamic system \eqref{eq:QHD} can be formally recovered from \eqref{eq:NLS_intro} by computing the associated balance laws; see, for instance, \cite{AMZ1}. 
Moreover, for strictly positive densities, the GCP quantities are related to $\psi$ through
\[
\mu=-\frac12\RE\left(\frac{\triangle\psi}{\psi}\right)+f'(|\psi|^2)+V,
\qquad
\sigma=-\frac12\IM\left(\frac{\triangle\psi}{\psi}\right).
\]
Hence, one expects the energy functional \eqref{eq:en} and the GCP functional \eqref{eq:higher} to be comparable to the first- and second-order Sobolev norms of $\psi$, respectively:
\[
E(t)\sim \|\nabla\psi\|_{L^2_x}^2,
\qquad
I(t)\sim \|\nabla^2\psi\|_{L^2_x}^2.
\]
Later, we shall rigorously justify these equivalences and establish the correspondence between the GCP framework for \eqref{eq:QHD} and the classical Sobolev theory associated with \eqref{eq:NLS_intro}.

Equation \eqref{eq:NLS_intro} was also independently introduced, in the case $f=0$, by Kostin \cite{Kos} (see also \cite{Na} and \cite{Y}) to derive a Schr\"odinger equation that takes into account the interaction with Brownian particles. The multivalued nature of the complex logarithm prevents us from studying equation \eqref{eq:NLS_intro} with standard methods.

To prove Theorem \ref{thm:glob2}, we first show the well-posedness of the Cauchy problem for the Schr\"odinger--Langevin equation \eqref{eq:NLS_intro}. Then we rigorously establish the equivalence between the QHD system \eqref{eq:QHD} and equation \eqref{eq:NLS_intro} for solutions with strictly positive density, using the methods of wave function lifting \cite{AMZ1,AMZ2} (see also \cite{HaoMinE}) and polar factorization \cite{AM1} developed in the authors' previous works. As an auxiliary result, we also state the well-posedness of equation \eqref{eq:NLS_intro} as the following theorem.

\begin{thm}\label{thm:NLS}
Assume that the initial datum $\psi_0\in H^2_x(\T^2)$ for \eqref{eq:NLS_intro} satisfies 
\[
\inf_x|\psi_0(x)|\geq \delta^\frac12,\quad e^{C_0\|\nabla\psi_0\|_{L^2_x}^2}
\left(1+\|\nabla\psi_0\|_{H^1_x}^2\right)
\le
\eps_0\frac{e^{M_0-\delta}}{(M_0-\delta)^{2 n}}.
\]
Then there exists $\tau^*>0$ such that for $0<\tau\leq \tau^*$, the Cauchy problem associated with \eqref{eq:NLS_intro} with initial data $\psi_0$ has a unique global solution $\psi\in \mathcal{C}([0,\infty);H^2_x(\T^2))$. Moreover, $\inf_{t,x}|\psi|\geq \delta^\frac12$ and the hydrodynamic variables associated with $\psi$, given by
\[
\rho=|\psi|^2,\quad J=\IM(\bar\psi\nabla\psi)=\rho v,
\]
satisfy the bounds \eqref{eq:en_disp_intro} (with $E_0=E(\psi_0)$) and \eqref{eq:bd_I}.
\end{thm}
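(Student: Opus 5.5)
The plan is a local well-posedness result plus a continuation argument. What drives the continuation is an a priori estimate on the combined functional $E(t)+I(t)$, together with a lower bound on $|\psi|$ kept by bootstrap.

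\emph{Local theory.} As long as $\inf_x|\psi|\ge\delta^{1/2}/2$, the phase $S$ is a well-defined continuous function, hence in $H^2_x$. We fix the branch by continuity in time, normalizing $S$ so that $\nabla S=v$ with $v=J/\rho$. Since $\operatorname{curl}v_0=0$ and \eqref{eq:cond_1} holds, $v_0$ is a genuine gradient on $\T^2$: it has no harmonic (constant) component, so $S$ is single-valued. Because the equation preserves $\operatorname{curl}v=0$ and the zero-mean conditions, this persists for $t>0$.

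With this choice, the nonlinearity $\psi\mapsto f'(|\psi|^2)\psi+\tau^{-1}S\psi+V\psi$ is locally Lipschitz on the open set $\{\psi\in H^2_x:\inf|\psi|>\delta^{1/2}/2\}$, using that $H^2_x(\T^2)$ is an algebra. A standard Duhamel contraction in $\mathcal C([0,T];H^2_x)$ then gives a unique local solution. The existence time depends only on $\|\psi_0\|_{H^2_x}$ and $\delta$. By Sobolev embedding, $\inf|\psi|$ moves continuously in time, so the solution persists as long as the $H^2_x$ norm stays bounded and $|\psi|$ stays away from $0$.

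\emph{A priori estimates.} On the maximal interval where $\inf|\psi|\ge\delta^{1/2}$, I would first derive the energy balance \eqref{eq:en_disp_intro} directly from the equation. The term $\tau^{-1}\int S\,\partial_t\rho$ integrates by parts to $\tau^{-1}\int\rho|v|^2$. Next I would use the identity \eqref{eq:dtI_intro}, justified rigorously through Proposition \ref{prop:3.2} or directly at the $H^2_x$ level. Combined with the equivalences $E\sim\|\nabla\psi\|_{L^2_x}^2$ and $I\sim\|\nabla^2\psi\|^2_{L^2_x}$, which hold with constants depending on $\delta$ and $M_0$, this reduces $H^2_x$ control to control of $I$.

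\emph{Main obstacle.} The hard step is closing the estimate for $I$. The dangerous term on the right of \eqref{eq:dtI_intro} is $-\tau^{-1}\int\rho|v|^2\mu$, which is cubic and carries a large prefactor. I would add a multiple of the physical entropy $\int\rho\log(\rho/M_0)$ and its time derivative, or equivalently the cross term $\int J\cdot\nabla\log\rho$. Its dissipation produces $\tau^{-1}\int|\nabla\sqrt\rho|^2$-type and $\int\rho\mu^2$-type coercive terms, which give a functional $\mathcal F\sim E+I$ satisfying $\frac{d}{dt}\mathcal F+c\tau^{-1}(\text{dissipation})\le C\tau^{-1}(\text{higher-order terms})\cdot(\ldots)$.

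The quantities $\rho|v|^2\mu$, $\mu\,\partial_tp(\rho)$ and $\rho\mu\,\partial_tV$ would be bounded with Gagliardo--Nirenberg and Brezis--Gallouët-type inequalities on $\T^2$, and the resulting cubic contributions absorbed into the dissipation. The integral $\int\rho|v|^4$ appearing in \eqref{eq:bd_I} should come out of this same balance. Once $\tau\le\tau^*$ is small, this gives exponential decay of $\mathcal F$ and so the uniform bound \eqref{eq:bd_I}.

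\emph{Closing the bootstrap.} The lower bound on the density should come from the logarithmic Sobolev-type inequality, Lemma \ref{lem:logsob}. It bounds $\operatorname{osc}\rho$ in terms of $E$ and $\log(1+I)$, so $\inf\rho\ge M_0-\operatorname{osc}\rho$. Under \eqref{eq:cond_2}, with $E(t)\le E_0$ and $I(t)$ bounded by its initial size up to constants, this yields $\inf\rho\ge\delta$. The factors $e^{E_0}$ and $(M_0-\delta)^{2n}$ are exactly the ones that inequality produces. Hence the lower bound $\inf\rho\ge\delta$ persists, the $H^2_x$ norm stays bounded, and the continuation criterion gives the global solution.

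Uniqueness comes from the local Lipschitz estimate. The hydrodynamic bounds follow by the identification $\rho=|\psi|^2$, $J=\IM(\bar\psi\nabla\psi)$.
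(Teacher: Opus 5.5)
Your architecture matches the paper's: local $H^2_x$ theory with the phase fixed by continuity, a functional combining entropy, energy and $I$ together with the correction $\int J\cdot\nabla\log\rho$, and closure of $\inf\rho\geq\delta$ via Lemma \ref{lem:logsob}. Getting short-time positivity by continuity instead of through \eqref{eq:ini_1} is a harmless variant. The gap is in the step you flag as hard: the mechanism you describe would not close.

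The entropy does not give $\tau^{-1}$-order coercivity. By Proposition \ref{prop:dtentr}, its dissipation $\tau\int\rho|\nabla^2\log\sqrt\rho|^2+4\tau\int p'(\rho)|\nabla\sqrt\rho|^2+\tau\int(\rho-M_0)^2$ carries a factor $\tau$, reflecting the parabolic scaling $t'=\tau t$. So the $\tau^{-1}$-weighted terms coming from $-\tau^{-1}\int\rho|v|^2\mu$ cannot be absorbed into it, and shrinking $\tau$ only makes the mismatch worse. In the paper the entropy does two things only. It absorbs the $O(\tau)$ term $\tau\int\rho\mu^2$ that appears when $\int\mu\,\d_tp(\rho)$ and $\int\rho\mu\,\d_tV$ are split by Young's inequality against the $\tau^{-1}\int\rho\sigma^2$ dissipation of $I$. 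It also supplies the coercivity $c_2\tau F$. Small $\tau$ (roughly $\tau^2\lesssim c_1$) is needed only so that the $c_1\tau^{-1}$ dissipation of $c_1I$ beats the entropy's own error terms $4\tau\int\rho\sigma^2+2\tau\int\rho|v|^4$.

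The missing idea is how the cubic term is actually handled. Expanding $\rho\mu=-\frac14\triangle\rho+\frac12|\nabla\sqrt\rho|^2+\frac12\rho|v|^2+(f'(\rho)+V)\rho$ shows that $-\tau^{-1}\int\rho|v|^2\mu$ contains the good-sign piece $-\frac{1}{2\tau}\int\rho|v|^4$. This is where the $|v|^4$ dissipation in \eqref{eq:bd_I} comes from. The remainder is treated by integration by parts and the Helmholtz bound for $\nabla(\sqrt\rho v)$, which uses $\operatorname{curl}v=0$. Gagliardo--Nirenberg in the form $\|\sqrt\rho v\|_{L^4_x}^2\lesssim\|\sqrt\rho v\|_{L^2_x}\|\nabla(\sqrt\rho v)\|_{L^2_x}+\|\sqrt\rho v\|_{L^2_x}^2$ then makes a factor $\int\rho|v|^2$ appear. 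The result is the bound $\frac{1}{8\tau}\int\rho\sigma^2+C\delta^{-1}E_0[M_0+|f'(\delta)|+I(t)]\,\tau^{-1}\int\rho|v|^2$ of Proposition \ref{prop:bdI}.

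That last term is not absorbed. One obtains $\frac{d}{dt}[F+\tau\int\log\rho\,\d_t\rho]+c_2\tau F\leq C_0F\,\tau^{-1}\int\rho|v|^2$ and closes by Gronwall, because the energy balance \eqref{eq:en_disp_intro} gives $\tau^{-1}\int_0^\infty\int\rho|v|^2\leq E_0$. Hence $F(t)\leq F(0)e^{C_0E_0}$ with no smallness of $F(0)$, as long as $\rho$ stays in $[\delta,2M_0-\delta]$.

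This also corrects your account of where the factors in the hypothesis come from. The Gronwall factor produces the exponential in $\|\nabla\psi_0\|_{L^2_x}^2$, and $(M_0-\delta)^{2n}$ comes from $c_1$. Lemma \ref{lem:logsob} only makes $\|\rho-M_0\|_{L^\infty_x}$ depend logarithmically on these quantities, which yields the $e^{M_0-\delta}$ on the right-hand side.

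Absorbing the term into the energy dissipation instead would require $c_1\delta^{-1}E_0I(t)\lesssim 1$, i.e.\ $I$ at most polynomial in $(M_0,E_0)$. For large $M_0$ the hypothesis allows $I_0$ exponentially large in $M_0$, so your route would at best prove a genuinely small-data result.
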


Additionally, by combining the functionals $E(t)$ and $I(t)$ with the physical entropy
\begin{equation}\label{eq:entr_intro}
H(\rho)=\int_{\T^2}\rho\log\left(\frac{\rho}{M_0}\right)dx,
\end{equation}
we further prove the exponential decay of GCP solutions. This property was also proved for the following fourth-order parabolic equation \cite{DGJ,GST,JM} in the one-dimensional case,
\begin{equation}\label{eq:DLSS}
\d_{t'}\bar\rho+\frac14\diver\cdot\diver(\bar\rho\nabla^2\log\bar\rho)=0,
\end{equation}
which can be viewed as a reduced QDD equation \eqref{eq:qdde} without pressure and electric potential.  Equation \eqref{eq:DLSS} also appears independently in various areas of the mathematical physics literature. It was first derived by Derrida, Lebowitz, Speer, and Spohn in \cite{DLSS1,DLSS2}; therefore, we refer to it as the \emph{DLSS equation}. A similar equation was also obtained in  \cite{Ber, BP1} as a model for thin films. The next result shows that the exponential decay is true also for the hydrodynamic model.

\begin{thm}[Dissipation for small $\tau$]\label{thm:disp}
Let $(\rho,J)$ be a GCP solution to the QHD system \eqref{eq:QHD} provided by Theorem \ref{thm:glob2}. Then there exist constants $C_0>0$ and $c_1,c_2>0$ such that, for $0<\tau\leq\tau^*$, the functional 
\[
F(t)=H(\rho)+E(t)+c_1 I(t)
\]
satisfies
\begin{equation}
F(t)\leq F(0)\exp\left(\frac{C_0}{\tau}\int_0^{t}\int_{\T^2}\rho |v|^2dxds-c_2\tau t\right).
\end{equation}
In particular, $H(\rho)$, $E(t)$ and $I(t)$ decay exponentially for $t$ large.
\end{thm}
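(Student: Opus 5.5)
We will prove a differential inequality of the form
\[
\frac{d}{dt}F(t)+c_2\tau F(t)\le \frac{C_0}{\tau}\Big(\int_{\T^2}\rho|v|^2dx\Big)F(t)
\]
and conclude by Gr\"onwall's lemma. We may work with the smooth solutions furnished by Theorem \ref{thm:NLS}, for which all quantities are regular and $\rho\ge\delta$, and then pass to GCP solutions by the stability argument.

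\textbf{Step 1: the three dissipation identities.} The energy identity \eqref{eq:en_disp_intro} gives $\frac{d}{dt}E=-\frac1\tau\int\rho|v|^2$. For the entropy, $\frac{d}{dt}H(\rho)=\int \nabla\log\rho\cdot J\,dx$. We write $J=\rho v$ and decompose $v$ via the momentum equation. In the relaxation regime, $v=-\tau\nabla\mu-\tau\d_tv$ from \eqref{eq:Ham_form}, so
\[
\frac{d}{dt}H=-\tau\int\nabla\rho\cdot\nabla\mu\,dx-\tau\frac{d}{dt}\int\nabla\rho\cdot v\,dx+\tau\int \d_t\nabla\rho\cdot v\,dx .
\]
The term $\int\nabla\rho\cdot\nabla\mu=\int\nabla\rho\cdot\nabla(-\tfrac{\triangle\sqrt\rho}{2\sqrt\rho}+f'(\rho)+V)+\dots$ yields the coercive DLSS-type dissipation $\tfrac14\int\rho|\nabla^2\log\rho|^2\gtrsim\int|\nabla^2\sqrt\rho|^2$, plus $\int f''(\rho)|\nabla\rho|^2\ge0$ and $\int\rho(\rho-M_0)\ge0$ from the Poisson term. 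This is the reason for adding the modified entropy $\tilde H=H+\tau\int\nabla\rho\cdot v$, which we absorb into $F$ up to comparability when $\tau$ is small. For $I$ we use Proposition \ref{prop:3.2}, i.e.\ \eqref{eq:dtI_intro}, which provides $-\frac1\tau\int\rho\sigma^2$ and error terms.

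\textbf{Step 2: coercivity.} We show that the total dissipation $D=\frac1\tau\int\rho|v|^2+c\tau\int|\nabla^2\sqrt\rho|^2+\frac{c_1}{\tau}\int\rho\sigma^2$ dominates $c_2\tau F$. Using $\rho\ge\delta$, the Poincar\'e inequality on $\T^2$ together with the zero-mean condition \eqref{eq:cond_1} and $\operatorname{curl}v=0$ (so that $v=\nabla S$ with $S$ periodic), and the log-Sobolev bound on $H$ in terms of $\|\nabla\sqrt\rho\|_{L^2}$, we obtain $H+E\lesssim D/\tau$. Since $\mu$ is controlled by $\nabla^2\sqrt\rho$, $|v|^2$, $f'(\rho)$ and $V$, and $\sigma$ is already dissipated, we also get $I\lesssim \tau^{-1}D$ plus terms involving $\int\rho|v|^4$.

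\textbf{Step 3 (main obstacle): the error terms in $\frac{d}{dt}I$.} The terms $\int\mu\,\d_tp(\rho)$ and $\int\rho\mu\,\d_tV$ are bounded by $C\|\sqrt\rho\sigma\|_{L^2}\|\mu\|\cdots$; we absorb them into $\frac1\tau\int\rho\sigma^2$ using the uniform bounds of Theorem \ref{thm:glob2} (with $\rho\ge\delta$, $\|\rho\|_{L^\infty}$ bounded via Lemma \ref{lem:logsob}) together with smallness of $\tau$. The collision term $\frac1\tau\int\rho|v|^2\mu$ cannot be absorbed. Instead we estimate it by $\frac{C}{\tau}\|\sqrt\rho v\|_{L^2}\,\|\sqrt\rho|v|\mu\|_{L^2}$, using Gagliardo–Nirenberg in two dimensions ($\|v\|_{L^\infty}^2\lesssim\|v\|_{L^2}\|v\|_{H^2}$ type bounds, via the $H^2$ equivalence $I\sim\|\nabla^2\psi\|^2$) to bound it by $\frac{C_0}{\tau}\big(\int\rho|v|^2\big)F$ plus absorbable pieces. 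This produces the exponential factor in the statement. Choosing $c_1$ small and $\tau\le\tau^*$ so that the cross terms are absorbed, we arrive at the desired differential inequality and apply Gr\"onwall. Since $\frac1\tau\int_0^\infty\int\rho|v|^2\le E_0$, the exponent is bounded by $C_0E_0-c_2\tau t$, which gives exponential decay of $H$, $E$ and $I$.
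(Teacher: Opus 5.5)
Your overall architecture matches the paper's: the combination $H+E+c_1I$, with the entropy corrected by $\tau\int\nabla\rho\cdot v\,dx=\tau\int\log\rho\,\d_t\rho\,dx$, closed by Gr\"onwall. The genuine gap is in Step 3, at the collision term $-\frac1\tau\int\rho|v|^2\mu\,dx$ in $\frac{d}{dt}I$.

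The bound you invoke, $\|v\|_{L^\infty}^2\lesssim\|v\|_{L^2}\|v\|_{H^2}$, is not available. Since $I\sim\|\nabla\psi\|_{H^1}^2$, it only places $v=\IM(\nabla\psi/\psi)$ in $H^1$, which does not embed into $L^\infty$ in two dimensions. Controlling $\|v\|_{H^2}$ would require $\psi\in H^3$, and neither $F$ nor any dissipation term gives that. Even granting it, your Cauchy--Schwarz split leaves something of order $\frac1\tau\|\sqrt\rho v\|_{L^2}^{3/2}$ times higher norms, not $\frac{C_0}{\tau}\bigl(\int\rho|v|^2dx\bigr)F$. Only the latter, quadratic in $\|\sqrt\rho v\|_{L^2}$, has a time integral bounded by $C_0E_0$ through the energy identity. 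There is also no order-$1/\tau$ dissipation available to absorb the remainder.

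The missing idea is to use the structure of the collision term rather than bound it crudely. Write $\rho\mu=-\frac14\triangle\rho+\frac12|\nabla\sqrt\rho|^2+\frac12\rho|v|^2+(f'(\rho)+V)\rho$. Then:
\begin{itemize}
\item the $\frac12\rho|v|^2$ piece contributes the good-sign dissipation $-\frac1{2\tau}\int\rho|v|^4$;
\item the $(f'(\rho)+V)$ piece is absorbed by the energy dissipation once $c_1$ is small;
\item the $\triangle\rho$ piece is integrated by parts and estimated with the div--curl bound $\|\nabla(\sqrt\rho v)\|_{L^2}\lesssim\|\sqrt\rho\sigma\|_{L^2}+\||\nabla\sqrt\rho||v|\|_{L^2}$ (using $\operatorname{curl}v=0$) and Ladyzhenskaya's inequality. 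Young's inequality against $\frac1\tau\int\rho\sigma^2$ then converts everything linear in $\|\sqrt\rho v\|_{L^2}$ into $\frac{C}{\tau}\|\sqrt\rho v\|_{L^2}^2E_0(1+I)$.
\end{itemize}
This is where the exponent in the theorem comes from.

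The gained dissipation $\frac{c_1}{2\tau}\int\rho|v|^4$ is also what absorbs two other terms. One is the $2\tau\int\rho|v|^4$ error in the entropy inequality. The other is the $|v|^4$ part of $\int\rho\mu^2$ in the coercivity step. These are the ``terms involving $\int\rho|v|^4$'' that your Step 2 leaves unaccounted for.

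A related point concerns $\int\mu\,\d_tp(\rho)$ and $\int\rho\mu\,\d_tV$. Absorbing them into $\frac1{4\tau}\int\rho\sigma^2$ leaves $C\tau\int\rho\mu^2$. The uniform bounds of Theorem \ref{thm:glob2} cannot handle this, since they would produce a non-decaying forcing term. Smallness of $\tau$ does not help either, because this term has the same order $\tau$ as the entropy dissipation. It must be absorbed by the entropy dissipation, which controls $\tau\int\rho\mu^2$ through \eqref{eq:logH2} and the pressure and Poisson terms, apart from its $|v|^4$ part. This is precisely why $c_1$ must be chosen small, and it is the core of the combined-functional mechanism.
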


\begin{rem}
The precise choices of $C_0>0$ and $c_1,c_2,\tau^*>0$ are given in Proposition \ref{prop:F_2d}. In particular, $c_1,c_2$ and $\tau^*$ are small constants depending on $(\delta,M_0,E_0,I_0)$, while $C_0$ is a universal constant.
\end{rem}

\subsection{Main result: relaxation-time limit}

We now present the result on the relaxation-time limit of the QHD system \eqref{eq:QHD} as $\tau\to 0$. Recall the scaling \eqref{eq:rs_intro}:
\[
t'=\tau t,\quad (\rho_\tau,J_\tau)(t',x)=\left(\rho,\frac{1}{\tau}J\right)\left(\frac{t'}{\tau},x\right),
\]
so that the QHD system \eqref{eq:QHD} can be rewritten as \eqref{eq:QHD_rs_intro}, which formally converges to the QDD equation \eqref{eq:qdde} as $\tau\to 0$. 

The first rigorous justification of the relaxation-time limit for small, smooth perturbations near a constant state was established in \cite{JLM}, and was subsequently extended to the bipolar system (modeling electrons and ions) in \cite{LZZ}. The relaxation-time limit for finite-energy weak solutions was treated in \cite{LT}; there the analysis requires the initial data to be well-prepared, as described below, and imposes sufficient smoothness on solutions to the limiting equation \eqref{eq:qdde}. Similar relaxation-time limit studies have also been carried out in the context of classical hydrodynamic equations; see, for instance, \cite{HL,HP,JP}. See also \cite{ACLS}, where the relaxation-time limit was established for general weak solutions to quantum Navier-Stokes equations.

A critical aspect of the relaxation-time limit is the appearance of an initial layer. The rescaled QHD system \eqref{eq:QHD_rs_intro} requires two pieces of initial data: the density $\rho_0$ and the momentum density $J_{\tau,0}$. By contrast, the QDD equation \eqref{eq:qdde} only prescribes the initial density $\rho_0$, whereas the limiting momentum density must satisfy the constitutive relation
\begin{equation}\label{eq:barJ}
\bar{J}=\frac{1}{2}\bar\rho\nabla\left(\frac{\triangle\sqrt{\bar\rho}}{\sqrt{\bar\rho}}\right)-\nabla p(\bar\rho)-\bar\rho\nabla\bar{V},
\end{equation}
which can be derived formally by letting $\tau\to 0$ in the second equation of \eqref{eq:QHD_rs_intro}.
Initial data for \eqref{eq:QHD_rs_intro} are said to be well-prepared if
\[
J_{\tau,0}=\bar J_0,
\]
where $\bar J_0$ denotes the vector field obtained by substituting
$\bar\rho=\rho_0$ into \eqref{eq:barJ}. 
In the present work, we do not assume the initial data to be well-prepared; consequently, an initial layer emerges naturally. As a result, convergence of the momentum density holds only for $t'>0$.
An additional contribution compared to existing literature is that we show an explicit rate of convergence with respect to the parameter $\tau$.

The proof of Theorem \ref{thm:rlxlimit} relies on deriving suitable a priori bounds for solutions to the hydrodynamic system, in order to pass to the limit (at least in the distributional sense) all terms in equations \eqref{eq:QHD_rs_intro}. These estimates are obtained by analyzing the physical entropy $H(\rho_\tau)$ defined by \eqref{eq:entr_intro}, which provides us with the $L^2_{t'}H^2_x$ bound of $\sqrt\rho_\tau$ uniformly with respect to $\tau$. Moreover, we obtain an explicit convergence rate of this limit with respect to $\tau$ by considering the relative entropy
\begin{align*}
H(\rho_\tau|\bar\rho)(t')=\int_{\T^2} g(\rho_\tau)-g(\bar\rho)-g'(\bar\rho)(\rho_\tau-\bar\rho)dx,
\end{align*}
where $g(s)=s\log s$ and $\bar\rho$ is the limiting solution of the QDD equation \eqref{eq:qdde}.

\begin{thm}[Relaxation-time limit]\label{thm:rlxlimit}
Let $\{(\rho_{\tau},J_{\tau})\}_{\tau>0}$ be a sequence of GCP solutions of the rescaled equation \eqref{eq:QHD_rs_intro}, such that
\begin{itemize}
\item $\inf_{t',x}\rho_{\tau}\geq \delta$ for a uniform constant $\delta>0$,
\item $(\rho_{\tau},J_{\tau})$ satisfy the uniform bounds
\[
E_\tau(t')+\int_0^{t'}\int_{\T^2}\rho_\tau|v_\tau|^2dxds\leq C
\]
and
\[
I_\tau(t')+\int_0^{t'}\int_{\T^2}[(\d_{t'}\sqrt{\rho_\tau})^2+\tau^2\rho_\tau |v_\tau|^4]dxds\leq 
C
\]
for some $0<C<\infty$. 
\end{itemize}
Then the relaxation-time limit holds, namely $\{\rho_{\tau}\}$ converges to a limiting density $\bar\rho$ in the form
\[
\|\rho_\tau-\bar\rho\|_{L^\infty_{t'}L^2_x}+\|\sqrt\rho_\tau (\nabla^2\log\sqrt\rho_\tau-\nabla^2\log\sqrt{\bar\rho})\|_{L^2_{t',x}}\leq C\tau.
\]
Moreover, $\bar\rho$ is a weak solution of \eqref{eq:qdde} in the sense of Definition \ref{def:qdde_ws}. 
\end{thm}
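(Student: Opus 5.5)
The argument has two parts. First I use the uniform bounds to extract a limit $\bar\rho$ by compactness and identify it as a weak solution of \eqref{eq:qdde}. Then I obtain the rate from a relative-entropy estimate between $\rho_\tau$ and $\bar\rho$.

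\emph{Uniform bounds.} I begin with the physical entropy. Using the continuity equation and the momentum equation of \eqref{eq:QHD_rs_intro}, I compute $\frac{d}{dt'}H(\rho_\tau)=-\int \nabla\log\rho_\tau\cdot J_\tau\,dx$. I substitute $J_\tau$ from the momentum equation, so that $J_\tau$ equals the QDD flux minus $\tau^2(\d_{t'}J_\tau+\diver(J_\tau\otimes J_\tau/\rho_\tau))$. The quantum term then produces the DLSS dissipation $\frac14\int\rho_\tau|\nabla^2\log\rho_\tau|^2dx$, which controls $\|\sqrt{\rho_\tau}\|_{L^2_{t'}H^2_x}$ thanks to $\rho_\tau\ge\delta$. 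The pressure and Poisson terms are nonnegative or bounded; for the Poisson term I use $\int\nabla\rho\cdot\nabla V=\int\rho(\rho-M_0)$. The inertial remainder I handle by writing $\tau^2\int\nabla\log\rho_\tau\cdot\d_{t'}J_\tau$ as a time derivative plus a term containing $\d_{t'}\nabla\log\rho_\tau$, which is $O(\tau^2)$ by the $I_\tau$-bound and the bound on $\tau^2\rho_\tau|v_\tau|^4$. This gives uniform bounds for $\sqrt{\rho_\tau}$ in $L^\infty_{t'}H^1\cap L^2_{t'}H^2$ and for $\d_{t'}\sqrt{\rho_\tau}$ in $L^2_{t',x}$. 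Aubin--Lions then yields strong convergence of $\rho_\tau$ to some $\bar\rho\ge\delta$ in $C_{t'}L^2_x$ and of $\nabla\sqrt{\rho_\tau}$ in $L^2_{t',x}$. Consequently the quadratic term $\nabla\sqrt\rho\otimes\nabla\sqrt\rho$ passes to the limit. The inertial terms tend to zero in distributions because $\tau^2 J_\tau$ and $\tau^2 J_\tau\otimes J_\tau/\rho_\tau$ are $O(\tau)$ in $L^2$, using $\int\rho|v|^2\le C$ and $\tau^2\int\rho|v|^4\le C$. Therefore $J_\tau\rightharpoonup\bar J$ given by \eqref{eq:barJ}, and $\bar\rho$ is a weak solution in the sense of Definition \ref{def:qdde_ws}.

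\emph{Rate.} I differentiate $H(\rho_\tau|\bar\rho)$, which gives
\[
\frac{d}{dt'}H(\rho_\tau|\bar\rho)=-\int (\nabla\log\rho_\tau-\nabla\log\bar\rho)\cdot J_\tau\,dx-\int\frac{\rho_\tau-\bar\rho}{\bar\rho}\,\d_{t'}\bar\rho\,dx.
\]
I rewrite this as $-\int(\nabla\log\rho_\tau-\nabla\log\bar\rho)\cdot(J_\tau-\frac{\rho_\tau}{\bar\rho}\bar J)\,dx$ and insert both constitutive laws. The quantum parts give, after the standard DLSS relative-entropy computation (Bregman-type structure as in \cite{JM}), the coercive term $\frac14\int\rho_\tau|\nabla^2\log\rho_\tau-\nabla^2\log\bar\rho|^2$ plus cubic error terms. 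The lower bound $\delta$ and the $H^2$ regularity of both densities let me absorb these errors into the dissipation and a Gronwall factor times $H(\rho_\tau|\bar\rho)$. The pressure and Poisson contributions are monotone or bounded by $C\|\rho_\tau-\bar\rho\|_{L^2}^2\lesssim CH(\rho_\tau|\bar\rho)$, again using the bounds above and below on the densities. The remainder $\tau^2\int(\nabla\log\rho_\tau-\nabla\log\bar\rho)\cdot(\d_{t'}J_\tau+\diver(J_\tau\otimes J_\tau/\rho_\tau))$ I estimate after integrating the time derivative by parts. The boundary terms are $\tau^2\|J_\tau\|_{L^2}\|\nabla\log\rho_\tau\|$, and by the energy bound $\|\sqrt{\rho_\tau}v_\tau\|_{L^2}$ is bounded, so these are $O(\tau^2)$. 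The bulk terms need $\d_{t'}\nabla\log\bar\rho$, which I control by the regularity of the limit. Young's inequality then gives a squared-rate contribution $C\tau^2$. Gronwall yields $H(\rho_\tau|\bar\rho)+\int\!\!\int\rho_\tau|\nabla^2\log\rho_\tau-\nabla^2\log\bar\rho|^2\le C\tau^2$, using $H(\rho_\tau|\bar\rho)(0)=0$ since both start at $\rho_0$. The claimed bound then follows from $H(\rho_\tau|\bar\rho)\gtrsim\|\rho_\tau-\bar\rho\|_{L^2}^2$ on bounded densities.

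\emph{Main obstacle.} I expect the hardest step to be the inertial remainder. No well-preparedness is assumed, so $\tau^2\d_{t'}J_\tau$ is only bounded through $I_\tau$ and $\tau^2\rho|v|^4$, and it must be paired against second derivatives of $\log\bar\rho$ including their time derivative. This requires enough regularity of $\bar\rho$, which I would inherit from the uniform bounds by lower semicontinuity. My first attempt is to move $\diver$ onto the test function and use $\d_{t'}\sqrt{\rho}\in L^2$. A secondary difficulty is closing the cubic DLSS error terms with only $L^2_{t'}H^2$ control; for this I plan to use the two-dimensional Gagliardo--Nirenberg inequality together with the positive lower bound $\delta$.
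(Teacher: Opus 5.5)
\textbf{Gap: the endpoint term in the rate estimate.} Your overall architecture matches the paper's: an entropy inequality giving uniform $L^2_{t'}H^2_x$ bounds, then Aubin--Lions and passage to the limit, then a relative-entropy Gronwall argument with $\tau^2$ remainders. The compactness part is fine. The problem is the endpoint term you get when you integrate $\tau^2\d_{t'}J_\tau$ by parts in time. You bound $\tau^2\int(\nabla\log\rho_\tau-\nabla\log\bar\rho)\cdot J_\tau\,dx$ at time $t'$ by $\tau^2\|J_\tau\|_{L^2_x}\|\nabla\log\rho_\tau\|_{L^2_x}$, and you assert that the energy bound makes $\|\sqrt{\rho_\tau}v_\tau\|_{L^2_x}$ bounded uniformly in $\tau$. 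It does not. The rescaled energy density \eqref{eq:endens_rs} carries $\frac{\tau^2}{2}\rho_\tau|v_\tau|^2$, so at a fixed time you only know $\tau\|\sqrt{\rho_\tau}v_\tau\|_{L^2_x}\le C$. The $\tau$-uniform control of $\rho_\tau|v_\tau|^2$ holds only in $L^1_{t',x}$, through the dissipation. This is exactly the initial layer: under the scaling \eqref{eq:rs_intro} with fixed data, $J_\tau(0)=J_0/\tau$.

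With the correct bound your endpoint term is only $O(\tau)$. Because it contains $\nabla(\log\rho_\tau-\log\bar\rho)$, it cannot be absorbed into $H(\rho_\tau|\bar\rho)(t')$ at a fixed time, since the dissipation is only integrated in time. Gronwall then gives $H(\rho_\tau|\bar\rho)\lesssim\tau$, i.e.\ a rate $\sqrt\tau$ for $\|\rho_\tau-\bar\rho\|_{L^\infty_{t'}L^2_x}$. Even interpolating with $H^2_x$ bounds only improves this to $\tau^{2/3}$, not the claimed $C\tau$.

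\textbf{The fix.} Integrate by parts in $x$ before estimating. Since $\diver J_\tau=-\d_{t'}\rho_\tau$, you have $\tau^2\int\nabla(\log\rho_\tau-\log\bar\rho)\cdot J_\tau\,dx=2\tau^2\int\sqrt{\rho_\tau}(\log\rho_\tau-\log\bar\rho)\,\sqrt{\rho_\tau}\sigma_\tau\,dx$. The $\tau^2\sigma_\tau^2$ part of $I_\tau$ gives $\tau^2\|\sqrt{\rho_\tau}\sigma_\tau\|_{L^2_x}^2\le 2C$ \emph{pointwise} in $t'$. The upper and lower bounds on the densities give $\|\sqrt{\rho_\tau}(\log\rho_\tau-\log\bar\rho)\|_{L^2_x}^2\sim H(\rho_\tau|\bar\rho)$. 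Young's inequality then bounds the endpoint term by $C\tau^2+\frac12H(\rho_\tau|\bar\rho)(t')$, which is absorbed on the left; this is what the paper does.

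Your instinct in the last paragraph, moving the divergence and using $\d_{t'}\sqrt\rho$, is also the right treatment of the bulk term. Indeed $\tau^2\int\d_{t'}\nabla\log\bar\rho\cdot J_\tau\,dx=\tau^2\int\d_{t'}\log\bar\rho\,\d_{t'}\rho_\tau\,dx$ is $O(\tau^2)$ after time integration, by the $L^2_{t',x}$ bounds on $\d_{t'}\sqrt{\rho_\tau}$ and $\d_{t'}\sqrt{\bar\rho}$. By contrast, $\d_{t'}\nabla\log\bar\rho$ itself is not controlled by the available regularity of the limit: through the QDD equation it costs about five spatial derivatives of $\bar\rho$.

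Finally, $L^2_{t'}H^2_x$ alone will not close the cubic DLSS error terms. The paper uses $\sqrt{\rho_\tau}\in L^\infty_{t'}H^2_x$, which comes from the $I_\tau$ bound. It also uses higher regularity of $\bar\rho$, obtained from $\bar J\in L^2_{t',x}$ through the constitutive relation.
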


We emphasize that our result applies to general initial data $(\rho_0,J_0)$, without requiring well-prepared initial data. On the other hand, sharper convergence rates can be obtained by combining the relative energy method with the assumption of well-prepared initial data; see for instance \cite{LT}.

Our paper is structured as follows. Definitions and preliminary results are presented in Section 2. Section \ref{sect:dt} is devoted to the computation of the time derivatives of the main functionals using the dynamics of \eqref{eq:QHD} and \eqref{eq:NLS_intro}, while Section \ref{sect:exist2d} is devoted to the proof of the global well-posedness of GCP solutions (Theorem \ref{thm:glob2}) and their exponential dissipation (Theorem \ref{thm:disp}). The equivalence between \eqref{eq:NLS_intro} and \eqref{eq:QHD} is rigorously established through the methods of wave function lifting and polar factorization. Finally, Section \ref{sect:rs} contains the proof of the relaxation-time limit.

\section{Definitions and preliminaries}\label{sect:def}

In this section, we introduce the notation and definitions that will be used throughout this paper.

The Lebesgue and Sobolev norms on $\T^2$ are defined by
\[
\|f\|_{L_{x}^{p}}\coloneqq\left(\int_{\T^2}|f(x)|^{p}dx\right)^{\frac{1}{p}},
\]
\[
\|f\|_{W_{x}^{k,p}}\coloneqq\sum_{|\alpha|\leq k}\|\partial_{x}^{\alpha}f\|_{L_{x}^{p}},
\]
and $H_{x}^{k}\coloneqq H^{k}(\T^2)$ denotes the Sobolev space $W^{k,2}(\T^2)$. 
Given a time interval $I\subset[0,\infty)$, the mixed space-time Lebesgue norm of a function $f:I\to L^r(\T^2)$ is defined by
\[
\|f\|_{L_{t}^{q}L_{x}^{r}}\coloneqq\left(\int_{I}||f(t)||_{L_{x}^{r}}^{q}dt\right)^{\frac{1}{q}}=\left(\int_{I}\left(\int_{\T^2}|f(t,x)|^{r}dx\right)^{\frac{q}{r}}dt\right)^{\frac{1}{q}}.
\]
Similarly, the mixed Sobolev norm $L_{t}^{q}W_{x}^{k,r}$ is defined. We use $C_0>0$ to denote a generic constant that may change from line to line, and $C(A)$ indicates dependence on the quantity $A$.

Assuming $\rho>0$ and defining the velocity by
$v=\frac{J}{\rho}$,
system \eqref{eq:QHD} can be rewritten as
\begin{equation}\label{eq:QHD_md}
\left\{\begin{aligned}
&\d_t\rho+\diver(\rho v)=0\\
&\d_t v+(v\cdot\nabla)v+\nabla f'(\rho)+\nabla V=\frac12\nabla\left(\frac{\triangle\sqrt\rho}{\sqrt\rho}\right)-\frac{1}{\tau} v.
\end{aligned}\right.
\end{equation}
Let us also recall that the total mass and total energy are respectively given by
\begin{equation}\label{eq:mass}
M(t)=\int_{\T^2}\rho(t, x)\,dx,
\end{equation}
and
\begin{equation}\label{eq:en_md}
E(t)=\int_{\T^2}\frac12\rho |v|^2+\frac12|\nabla\sqrt{\rho}|^2+f(\rho)+\frac12|\nabla V|^2dx.
\end{equation}
We also recall the generalized chemical potential associated with GCP solutions:
\begin{equation}\label{eq:chem}
\mu=-\frac{\triangle\sqrt\rho}{2\sqrt\rho}+\frac12|v|^2+f'(\rho)+V,\quad \sigma=\d_t\log\sqrt{\rho}=-\frac{\diver J}{2\rho}.
\end{equation}
The $L^2$ norms of $\mu$ and $\sigma$ with respect to the measure $\rho\,dx$ defines the GCP functional
\begin{equation}\label{eq:GCP}
I(t)=\int_{\T^2}\frac12\rho(\mu^2+\sigma^2)dx.
\end{equation}
Let us remark that this definition may apply to a more general class of weak solutions, which includes states with vacuum regions; see \cite{AMZ1,AMZ2} for more details.
The definition of weak solutions with bounded generalized chemical potential is given below.

\begin{defn}[GCP solutions]\label{def:GCPsln}
We say that $(\rho, v)$ is a GCP solution to the Cauchy problem for the QHD system \eqref{eq:QHD_md} on $[0,T]\times\T^2$ if
\[
(\sqrt{\rho},\sqrt{\rho}v)
\in L^1([0,T);H^1(\T^2)\times L^2(\T^2))
\]
satisfies system \eqref{eq:QHD_md} in the sense of distributions, and the following estimates hold:
\begin{equation*}
\begin{aligned}
\|\sqrt{\rho}\|_{L^\infty(0, T; H^1(\T^2))}+\|\sqrt\rho v\|_{L^\infty(0, T; L^2(\T^2))}\leq& C_0,\\
\|\d_t\sqrt{\rho}\|_{L^\infty(0, T; L^2(\T^2))}+\|\sqrt\rho\mu\|_{L^\infty(0, T; L^2(\T^2))}\leq& C_0.
\end{aligned}
\end{equation*}
Moreover, if $T=\infty$, we call $(\rho, v)$ a global solution.
\end{defn}

Next, we introduce the physical entropy $H(\rho)$ of a density function $\rho\geq0$ on $[0,T)\times\T^2$, defined by
\begin{equation}\label{eq:entr}
H(\rho)=\int_{\T^2}\rho\log\left(\frac{\rho}{M_0}\right)dx,
\end{equation}
where $M_0= M(t)$ is the conserved total mass of $\rho$ given by \eqref{eq:mass}. Since we assume $|\T^2|=1$, $M_0$ can also be viewed as the average of $\rho$. Therefore, by convexity of the function $g(s)=s\log s$, we have 
\[
H(\rho)=\int_{\T^2} \rho\log\rho dx-M_0\log M_0\geq 0.
\]
For later use in the analysis of the relaxation-time limit, we now give the definition and dissipative property of weak solutions to the quantum drift-diffusion equation \eqref{eq:qdde}.

\begin{defn}\label{def:qdde_ws}
We say that $\bar\rho$ is a weak solution to the quantum drift-diffusion equation \eqref{eq:qdde} with initial data $\bar\rho(0)=\rho_0\in L^1(\T^2)$ on $[0,T)\times\T^2$, if $\sqrt{\bar\rho}\in L^2_{\mathrm{loc}}([0,T);H^1(\T^2))$
and satisfies equation \eqref{eq:qdde} in the sense of distributions. 

Moreover, $\bar\rho$ is called a dissipative solution if 
\begin{equation}\label{eq:disp_entrp_1}
H(\bar\rho)(t)+\int_0^t\int_{\T^2}|\nabla^2\sqrt{\bar\rho}|^2+|\nabla(\bar\rho^\frac14)|^4dxds\leq H(\rho_0)
\end{equation}
for any $0\leq t<T$.
\end{defn}

As discussed in the Introduction, there is a formal analogy between system \eqref{eq:QHD_md} and the Schr\"odinger--Langevin type equation 
\begin{equation}\label{eq:NLS}
i\d_t\psi+\frac12\triangle\psi=f'(|\psi|^2)\psi+\frac{1}{\tau}S\psi+V\psi,
\end{equation}
through the so-called Madelung transformation \cite{Mad}. Indeed, this can be seen by writing $\psi$ in polar form 
\begin{equation}\label{eq:mad}
\psi=\sqrt{\rho}e^{iS}.
\end{equation}
Substituting \eqref{eq:mad} into \eqref{eq:NLS} and separating the real and imaginary parts of the resulting equation, one obtains the system for $\rho$ and $S$: 
\begin{equation}\label{eq:qHJ}
\left\{\begin{aligned}
&\d_t\rho+\diver(\rho\nabla S)=0\\
&\d_tS+\frac12|\nabla S|^2+f'(\rho)+V+\frac1\tau S=\frac12\frac{\triangle\sqrt{\rho}}{\sqrt{\rho}}.
\end{aligned}\right.
\end{equation}
If we further differentiate the second equation in \eqref{eq:qHJ} with respect to $x$, we obtain an Euler-type equation for the velocity field $v=\nabla S$, namely
\begin{equation}\label{eq:qvel}
\operatorname{curl}v=0,\;\d_tv+\nabla\left(\frac12|v|^2+f'(\rho)+V-\frac12\frac{\triangle\sqrt{\rho}}{\sqrt{\rho}}\right)=-\frac1\tau v.
\end{equation}
By recalling the definition of the chemical potential in \eqref{eq:chem}, we see that the evolution equation in \eqref{eq:qvel} coincides with the second equation in \eqref{eq:Ham_form}.
The momentum equation in the QHD system \eqref{eq:QHD} is then derived by multiplying \eqref{eq:qvel} by $\rho$ and by using the continuity equation. 

Following the discussion above, we say that the hydrodynamic variables associated with $\psi\in H_x^s$, $s\ge1$, are given by the pair $(\rho,\rho v)$ if
\begin{equation}\label{eq:assoc}
(\rho,\rho v)=(|\psi|^2,\IM(\bar\psi\nabla\psi)).
\end{equation}
If one further has $\inf|\psi|>0$, then the velocity can be directly written as
\begin{equation}\label{eq:veloc}
v=\IM\left(\frac{\nabla\psi}{\psi}\right).
\end{equation}

Conversely, one may recover a complex wave function $\psi$ from given hydrodynamic variables $(\rho,v)$ provided that $\rho\geq\delta>0$. Motivated by \eqref{eq:qHJ}, we define
\begin{equation}\label{eq:gradS}
\begin{cases}
\nabla S =v,\quad\d_t S=-\mu-\frac{1}{\tau}S \\
S(0,0)=S_*.
\end{cases}
\end{equation}
The equation \eqref{eq:qvel} for the velocity field may be interpreted as the "irrotationality" condition for the gradient equation \eqref{eq:gradS}, namely
\begin{equation}\label{eq:qvel_irrot}
\operatorname{curl}v=0,\quad\d_tv+\nabla(\mu+\frac1\tau S)=0.
\end{equation}
Thus we can formally write the phase function $S$ in terms of the hydrodynamic variables as
\begin{equation}\label{eq:defS_pt}
S(t,x)=e^{-\frac{t}{\tau}} \left(S_*+\int_{l(0,x_*)}v_0(y)\cdot d\vec{l}(y)\right)-\int_0^t e^{\frac{s-t}{\tau}}\mu(s,x_*)ds+\int_{l(x_*,x)}v(t,y)\cdot d\vec{l}(y),
\end{equation}
where $x_*\in \T^2$ is arbitrary and $l(x,y)$ is a piecewise smooth curve connecting $x$ and $y$. For example, we can choose $l(x,y)$ to be the collection of piecewise straight lines parallel to each axis, namely for $x=(x_1,x_2)$, $y=(y_1,y_2)$, we have
\begin{equation}\label{eq:l}
l(x,y)=\{(s,x_2);s\textrm{ from }x_1\text{ to }y_1\}\cup\{(y_1,s);s\text{ from }x_2\textrm{ to }y_2\}.
\end{equation}
Due to the "irrotationality" condition \eqref{eq:qvel_irrot}, the choice of point $x_*$ has no influence on the definition. However, for weak solutions $(\rho,v)$, the chemical potential $\mu$ may only belong to a Lebesgue space, or even be a distribution. Since the right-hand side of \eqref{eq:defS_pt} should be independent of the choice of $x_*$, we can take the average of \eqref{eq:defS_pt} with respect to $x_*$, thereby allowing us to extend the hydrodynamic definition of the phase function $S$ to more general cases.

\begin{defn}[Phase function]\label{def:phase}
Let $(\rho,v)$ be a solution of \eqref{eq:QHD_md} on $[0,T)\times\T^2$ such that the velocity field $v=(v_1,v_2)^t$ and the chemical potential $\mu$ given by \eqref{eq:chem} satisfy:
\begin{equation}\label{eq:condphase}
v\in L^\infty_tL^1_x,\quad\mu\in \mathcal{D}'([0,T)\times\T^2),
\end{equation}
and
\begin{equation}\label{eq:avrg0}
\operatorname{curl} v=0,\quad\int_\T v_2(t,x_1,y) dy=\int_\T v_1(t,y,x_2) dy=0,
\end{equation}
for almost every $t\in[0,T)$ and $(x_1,x_2)\in\T^2$. 
Then we define the phase function $S$ to be the solution of the gradient equation \eqref{eq:gradS}, which can be explicitly written as
\begin{equation}\label{eq:defS}
\begin{aligned}
S(t,x)=&e^{-\frac{t}{\tau}} \left(S_*+\int_{\T^2}\int_{l(0,x_*)}v_0(y)\cdot d\vec{l}(y)dx_*\right)\\
&-\int_0^t e^{\frac{s-t}{\tau}} \int_{\T^2} \mu(s,x_*)dx_*ds+\int_{\T^2}\int_{l(x_*,x)}v(t,y)\cdot d\vec{l}(y)dx_*.
\end{aligned}
\end{equation}
\end{defn}

\begin{rem}\label{rem:avrg0}
The zero-average condition in \eqref{eq:avrg0} is needed to ensure that $S$ is periodic on $\T^2$; however, Definition \ref{def:phase} remains valid without this requirement. Moreover, condition \eqref{eq:avrg0} is preserved by the dynamics of \eqref{eq:QHD_md}. Indeed, if the initial velocity $v_0$ satisfies condition \eqref{eq:avrg0}, then by taking, respectively, the curl and the directional averages of the second equation in \eqref{eq:QHD_md}, one can show that \eqref{eq:avrg0} also holds for the solution $v(t)$.
\end{rem}

\begin{rem}\label{rem:equivS}
Let us remark that the assumption $|\psi|\geq\sqrt{\delta}$ implies that a continuous branch of the logarithm $S=\frac{1}{i}\log\left(\frac{\psi}{|\psi|}\right)$ is well defined. Consequently, by the uniqueness of equation \eqref{eq:gradS}, it coincides with \eqref{eq:defS}, for a suitable choice of $S_*$.
\end{rem}

Using the definition \eqref{eq:chem} of $\mu$, after taking the average with respect to $x_*$, we have
\[
\int_{\T^2} \mu(s,x_*)dx_*=\int_{\T^2}\left(-\frac{\triangle\sqrt\rho}{2\sqrt\rho}+\frac12|v|^2+f'(\rho)+V\right)dx_*.
\]
If, in addition, $\inf\rho>0$, by integrating by parts it follows that
\[
-\int_{\T^2}\frac{\triangle\sqrt\rho}{2\sqrt\rho}dx_*=-\int_{\T^2}\frac{|\nabla\sqrt\rho|^2}{2\rho}dx_*.
\]
Thus the averaged chemical potential can be expressed without second-order derivatives of $\rho$. Here we write this formulation as a lemma.

\begin{lem}\label{lem:phase2} 
Let $(\rho,v)$ be a GCP solution in the sense of Definition \ref{def:GCPsln}. If $\inf_{t,x}\rho>0$, then 
\begin{equation}\label{eq:phase2} 
\int_{\T^2} \mu(s,x_*)dx_* =\int_{\T^2} -\frac{|\nabla\sqrt{\rho}|^2}{2\rho}+\frac12|v|^2+f'(\rho)+V\,dx_*. 
\end{equation} 
\end{lem}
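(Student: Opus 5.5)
The identity reduces to showing
\[
\int_{\T^2}\frac{\triangle\sqrt\rho}{\sqrt\rho}\,dx=\int_{\T^2}\frac{|\nabla\sqrt\rho|^2}{\rho}\,dx
\]
for a.e. $t$, together with integrability of the remaining terms. I will integrate by parts against the test function $1/\sqrt\rho$ on $\T^2$, where there are no boundary terms. Formally, $\nabla(1/\sqrt\rho)=-\nabla\sqrt\rho/\rho$, so
\[
\int\frac{\triangle\sqrt\rho}{\sqrt\rho}=-\int\nabla\sqrt\rho\cdot\nabla\Big(\frac1{\sqrt\rho}\Big)=\int\frac{|\nabla\sqrt\rho|^2}{\rho}.
\]
The factor $-\tfrac12$ then gives the claimed formula. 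The remaining terms $\tfrac12|v|^2+f'(\rho)+V$ are integrated unchanged.

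The justification goes as follows. Fix a time $t$ at which the bounds of Definition \ref{def:GCPsln} hold, and write $\rho\ge\delta>0$.

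First, I check that the right-hand side is integrable. We have $|\nabla\sqrt\rho|^2/\rho\le\delta^{-1}|\nabla\sqrt\rho|^2\in L^1$ and $|v|^2\le\delta^{-1}\rho|v|^2\in L^1$. Next, $\sqrt\rho\in H^1(\T^2)$, so $\rho\in L^p$ for all $p<\infty$, and hence $f'(\rho)=(\rho-M_0)^{2n-1}\in L^1$. Finally, $V\in H^2$ by elliptic regularity for the Poisson equation with right-hand side $\rho-M_0\in L^2$.

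Second, I need regularity of $\sqrt\rho$. From $\sqrt\rho\,\mu\in L^2$ and the definition of $\mu$, we get $\triangle\sqrt\rho=2\rho\big(\tfrac12|v|^2+f'(\rho)+V\big)-2\rho\mu$ in the distributional sense. The terms $\rho\mu=\sqrt\rho\cdot\sqrt\rho\,\mu$ and $\rho|v|^2=\sqrt\rho\cdot\sqrt\rho|v|^2$ are both in $L^1$, since $\sqrt\rho\in L^\infty_tL^2_x$ at least. Hence $\triangle\sqrt\rho\in L^1$, and $\triangle\sqrt\rho/\sqrt\rho\in L^1$ because $\sqrt\rho\ge\delta^{1/2}$. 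Strictly, the identity defining $\mu$ should be read this way: for GCP solutions, $\mu$ is defined through $\sqrt\rho\,\mu$, with $\sqrt\rho\triangle\sqrt\rho$ interpreted in distributions. Positivity of $\rho$ is what lets us divide.

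Third, I justify the integration by parts. Here $1/\sqrt\rho\in H^1\cap L^\infty$, with $\nabla(1/\sqrt\rho)=-\nabla\sqrt\rho/\rho$ by the chain rule, since $s\mapsto 1/s$ is Lipschitz on $[\delta^{1/2},\infty)$. I mollify, setting $w_\eps=\sqrt\rho*\eta_\eps$. Then $w_\eps\ge\delta^{1/2}$, and $w_\eps\to\sqrt\rho$ in $H^1$ and a.e. For the smooth functions the identity $\int\triangle w_\eps/w_\eps=\int|\nabla w_\eps|^2/w_\eps^2$ holds. The right side converges by $H^1$ convergence and the uniform lower bound.

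The main obstacle is passing to the limit on the left side, since $\triangle\sqrt\rho$ is only known to be in $L^1$ while $1/w_\eps$ converges only boundedly a.e. I will use $\triangle w_\eps=(\triangle\sqrt\rho)*\eta_\eps\to\triangle\sqrt\rho$ in $L^1$, combined with $1/w_\eps\to1/\sqrt\rho$ a.e. and $|1/w_\eps|\le\delta^{-1/2}$. Splitting the product gives convergence: the term $(\triangle w_\eps-\triangle\sqrt\rho)/w_\eps$ tends to zero in $L^1$, and the term $\triangle\sqrt\rho\,(1/w_\eps-1/\sqrt\rho)$ tends to zero by dominated convergence. This yields \eqref{eq:phase2}.
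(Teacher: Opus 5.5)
Your proposal is correct and is the same argument as the paper's, which simply asserts the identity by integrating $\triangle\sqrt\rho$ against $1/\sqrt\rho$ using $\rho\ge\delta$; your mollification argument supplies the justification the paper omits. One slip to fix: the displayed identity should read $\sqrt\rho\,\triangle\sqrt\rho=2\rho(\tfrac12|v|^2+f'(\rho)+V)-2\rho\mu$ (equivalently $\triangle\sqrt\rho=2\sqrt\rho(\tfrac12|v|^2+f'(\rho)+V)-2\sqrt\rho\,\mu$), and dividing by $\sqrt\rho$ to get $\triangle\sqrt\rho\in L^1$ needs a short chain-rule check, although you could skip the $L^1$ detour entirely by pairing $\triangle\sqrt\rho\in H^{-1}$ directly with $1/\sqrt\rho\in H^1$.
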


Now assume that $(\rho,v)$ is a GCP solution in the sense of Definition \ref{def:GCPsln}, with $\rho\geq\delta>0$, and that $v$ satisfies conditions \eqref{eq:condphase} and \eqref{eq:avrg0}. Then, by Definition \ref{def:phase}, we can construct a wave function $\psi=\sqrt{\rho}e^{iS}$ associated with $(\rho,v)$ in the sense of \eqref{eq:assoc}. Moreover, by the wave function lifting method established in the authors' previous work \cite{HaoMinE}, $\psi$ is shown to be a solution to the NLS-type equation \eqref{eq:NLS}, where the potential $S$ is also given by Definition \ref{def:phase}. The result in \cite{HaoMinE} is stated on $\R^2$, but with minor modifications it is also valid on $\T^2$.

\begin{prop}[Wave function lifting]\label{prop:lift2}
Let $\delta>0$.

\begin{enumerate}
\item Let $(\rho_0,v_0)(x)$ satisfy the assumptions of Theorem \ref{thm:glob2}. Then there exists a wave function $\psi_0\in H^2_x$, unique up to the choice of a constant phase, such that
\[
\rho_0=|\psi_0|^2,\qquad \rho_0v_0=\IM(\overline{\psi_0}\nabla\psi_0).
\]
Moreover, there exists a constant $C_0>0$, depending only on $\delta$ and on the fixed parameters of the system, such that
\[
I_0+C_0^{-1}E_0
\leq \frac12\|\nabla\psi_0\|_{H^1_x}^2
\leq I_0+C_0E_0.
\]

\item Let $(\rho,v)(t,x)$ be a GCP solution to the QHD system \eqref{eq:QHD_md} on $[0,T)\times\T^2$ in the sense of Definition \ref{def:GCPsln}, with strictly positive density $\inf_{t,x}\rho\geq\delta>0$.
Moreover, assume $v$ satisfies conditions \eqref{eq:condphase} and \eqref{eq:avrg0}. 
Then there exists a wave function $\psi\in L^\infty_tH^2_x$  
associated with $(\rho,v)$, namely
\[
\rho=|\psi|^2,\qquad \rho v=\IM(\bar\psi\nabla\psi),
\]
for almost every $t\in[0,T)$. Moreover, for almost every $t\in[0,T)$,
\begin{equation}\label{eq:H2_lift}
I(t)+C_0^{-1}E(t)
\leq \frac12\|\nabla\psi\|_{H^1_x}^2(t)
\leq I(t)+C_0E(t).
\end{equation}
Furthermore, $\psi$ is a solution to the NLS-type equation \eqref{eq:NLS} in the sense of distributions on $(0,T)\times\T^2$.
\end{enumerate}
\end{prop}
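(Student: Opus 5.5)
The plan is to build $\psi=\sqrt\rho\,e^{iS}$, with $S$ the phase of Definition \ref{def:phase}, and then obtain every claim from pointwise identities that hold because $\rho\ge\delta$.

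\emph{Part (1): the initial datum.} Since $\operatorname{curl}v_0=0$ and \eqref{eq:cond_1} holds, the line integral $S_0(x)=\int_{\T^2}\int_{l(x_*,x)}v_0\cdot d\vec l\,dx_*$ is a well-defined periodic function with $\nabla S_0=v_0$. Any other periodic phase $\tilde S$ with $\nabla\tilde S=v_0$ differs from $S_0$ by a constant, which gives uniqueness up to a constant phase. We then set $\psi_0=\sqrt{\rho_0}e^{iS_0}$.

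Because $\rho_0\ge\delta$, the following identities hold pointwise:
\[
\nabla\psi_0=\psi_0\bigl(\nabla\log\sqrt{\rho_0}+iv_0\bigr),
\qquad
\frac{\triangle\psi_0}{\psi_0}=\frac{\triangle\sqrt{\rho_0}}{\sqrt{\rho_0}}-|v_0|^2+i\bigl(\diver v_0+2v_0\cdot\nabla\log\sqrt{\rho_0}\bigr).
\]
Comparing with \eqref{eq:chem} gives
\[
-\tfrac12\RE\bigl(\triangle\psi_0/\psi_0\bigr)=\mu_0-\tfrac12|v_0|^2-f'(\rho_0)-V_0+\tfrac12|v_0|^2=\mu_0-f'(\rho_0)-V_0,
\qquad
-\tfrac12\IM\bigl(\triangle\psi_0/\psi_0\bigr)=\sigma_0 .
\]
It follows that
\[
\tfrac14\|\triangle\psi_0\|_{L^2}^2=\int\rho_0\bigl[(\mu_0-f'(\rho_0)-V_0)^2+\sigma_0^2\bigr]dx .
\]
On $\T^2$, integration by parts gives $\|\nabla^2\psi_0\|_{L^2}=\|\triangle\psi_0\|_{L^2}$. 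Expanding the square, the cross terms $\int\rho_0\mu_0(f'(\rho_0)+V_0)$ are handled by Cauchy--Schwarz and Young's inequality, which yields a two-sided bound between $\|\nabla^2\psi_0\|^2$ and $I_0$ up to a multiple of $\|\sqrt{\rho_0}(f'(\rho_0)+V_0)\|_{L^2}^2$. We also have $\|\nabla\psi_0\|_{L^2}^2=\int|\nabla\sqrt{\rho_0}|^2+\rho_0|v_0|^2\,dx\le 2E_0$.

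\emph{Main obstacle.} The hard step is to bound $\|\sqrt{\rho}(f'(\rho)+V)\|_{L^2}$ by $C_0E$, and also to match the precise constants in the form $I_0+C_0^{\pm1}E_0$. For $V$, we use $\|V\|_{L^\infty}\lesssim\|\rho-M_0\|_{L^2}\lesssim E^{1/(2n)}$ together with elliptic estimates. For $f'$, the two-dimensional Sobolev/Gagliardo--Nirenberg inequality gives
\[
\|\rho-M_0\|_{L^{4n}}^{2n-1}\lesssim \text{(powers of } E).
\]
Since the fixed parameters include $M_0$ and $n$, these polynomial dependences are absorbed into $C_0$. Where $E$ enters nonlinearly, I will use a bound $C_0E$ valid on the bounded range of $E$ under consideration; if that is not possible, I will allow the constant to depend on $E_0$. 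This is the part whose bookkeeping I expect to be delicate.

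\emph{Part (2): time-dependent solutions.} By Remark \ref{rem:avrg0}, $v(t)$ satisfies \eqref{eq:avrg0}, so \eqref{eq:defS} defines a periodic $S(t,x)$ with $\nabla S=v$. Lemma \ref{lem:phase2} shows that the averaged chemical potential is an $L^1_{loc}$ function of time, since $\rho\ge\delta$ and the energy is bounded. Differentiating \eqref{eq:defS} then gives $\partial_tS=-\mu-\frac1\tau S$ in the sense of distributions, using \eqref{eq:qvel_irrot} to handle the $x$-dependent part. We set $\psi=\sqrt\rho\,e^{iS}$.

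The relations $\rho=|\psi|^2$ and $\rho v=\IM(\bar\psi\nabla\psi)$ are immediate. The pointwise identities from Part (1) give \eqref{eq:H2_lift} for a.e. $t$, with $\psi\in L^\infty_tH^2_x$ following from the GCP bounds.

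Finally, we compute $i\partial_t\psi=\psi\bigl(i\sigma-\partial_tS\bigr)=\psi\bigl(i\sigma+\mu+\tfrac1\tau S\bigr)$. We then substitute $\mu$ and $\sigma$ expressed through $\triangle\psi/\psi$. The $\tfrac12|v|^2$ terms cancel, and we recover \eqref{eq:NLS} distributionally. This substitution is justified because $\rho\ge\delta$ makes division by $\psi$ legitimate and all the products involved lie in $L^1_{loc}$.
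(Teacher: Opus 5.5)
Your construction $\psi=\sqrt\rho\,e^{iS}$, with $S$ from Definition \ref{def:phase}, is the one the paper sketches just before the proposition; the paper itself gives no proof and defers to \cite{HaoMinE}. The following parts of your argument are sound:
the association relations, the equation $\d_tS=-\mu-\frac1\tau S$, and the derivation of \eqref{eq:NLS} from $i\d_t\psi=\psi(i\sigma+\mu+\frac1\tau S)$ together with $\triangle\psi=-2\psi(\mu-W+i\sigma)$, where $W=f'(\rho)+V$.
The last identity still has to be justified distributionally. A priori you only know $\psi\in H^1$, and only the combination $-\frac12\triangle\sqrt\rho+\frac12\sqrt\rho|v|^2$ is known to lie in $L^2$. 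Test $\triangle\psi$ against $\phi$ and use $e^{iS}\nabla\phi=\nabla(e^{iS}\phi)-iv\,e^{iS}\phi$.

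\textbf{The norm comparison is where the proposal fails.} No bookkeeping can match the constants, because with coefficient $1$ in front of $I$ the two-sided bound is false. Your own identity gives $\frac12\|\triangle\psi\|_{L^2}^2=2\int\rho[(\mu-W)^2+\sigma^2]dx$, which is about $4I$ whenever $\sqrt\rho W$ is negligible.
For instance, take $v=0$ and $\sqrt\rho=a+\eps\cos(2\pi kx_1)$. Then $\frac12\|\nabla\psi\|_{H^1}^2\ge\frac14\eps^2(2\pi k)^4$, while $I=\frac1{16}\eps^2(2\pi k)^4+O(\eps^2k^2)$ and $E=O(\eps^2k^2)$. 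So the upper bound fails as $k\to\infty$.
With $k=1$ and $M_0\approx a^2$ large, $\sqrt\rho W$ dominates $\sqrt\rho\mu$, so the lower bound $I\le\frac12\|\nabla\psi\|_{H^1}^2$ fails as well.
What is true, and all the paper actually uses (Proposition \ref{prop:bdI}, proof of Theorem \ref{thm:NLS}), is equivalence up to multiplicative constants. This follows from
$\frac12\|\triangle\psi\|_{L^2}^2\le 8I+4\int\rho W^2dx$, $I\le\frac14\|\triangle\psi\|_{L^2}^2+\int\rho W^2dx$ and $\|\nabla\psi\|_{L^2}^2\le 2E$,
once you also have $\int\rho W^2dx\le CE$ and $E\le C\|\nabla\psi\|_{L^2}^2$.

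\textbf{Your plan for the $V$ term breaks at small energy.} The chain $\|V\|_{L^\infty}\lesssim\|\rho-M_0\|_{L^2}\lesssim E^{1/(2n)}$ only yields $\int\rho V^2dx\lesssim M_0E^{1/n}$. For $n\ge2$ this bound is not $O(E)$ as $E\to0$: near the constant state $\|\rho-M_0\|_{L^2}\sim\eps$, whereas $E^{1/(2n)}\sim\eps^{1/n}$. Restricting $E$ to a bounded range does not help, since the loss occurs at small $E$, not large $E$.
Work with the gradient parts of the energy instead:
$\int\rho V^2dx\le\|\rho\|_{L^2}\|V\|_{L^4}^2\lesssim\|\rho\|_{L^2}\|\nabla V\|_{L^2}^2$.
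In the regime where the proposition is applied ($\psi\in X(T^*)$, so $|\rho-M_0|\le M_0-\delta$) one has, with $C=C(\delta,M_0,n)$:
\begin{itemize}
\item $\int\rho f'(\rho)^2dx\le 2M_0(M_0-\delta)^{2n-2}\int(\rho-M_0)^{2n}dx\le CE$;
\item $\int f(\rho)dx+\frac12\|\nabla V\|_{L^2}^2\lesssim\|\rho-M_0\|_{L^2}^2\lesssim\|\nabla\rho\|_{L^2}^2\le 4\|\rho\|_{L^\infty}\|\nabla\psi\|_{L^2}^2$.
\end{itemize}
Without an upper bound on $\rho$ you can still close with a constant depending on a bound for $E$, as you anticipated. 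This works only if the interpolation runs through $\nabla\rho=2\sqrt\rho\nabla\sqrt\rho$ rather than through $\int f(\rho)dx$ alone.

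\textbf{A smaller point on uniqueness in (1).} An arbitrary $\psi_0$ with $|\psi_0|^2=\rho_0$ has a continuous phase that could wind around $\T^2$. It is \eqref{eq:cond_1} that forces the winding numbers to vanish, so the phase is periodic and equals $S_0$ up to a constant; you should state this rather than assume periodicity.
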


\section{Time-derivative computation of the main functionals}\label{sect:dt}

In this section, we derive the time-evolution identities for the functionals $E(t)$, $I(t)$ and the entropy $H(\rho)$ defined by \eqref{eq:en_md}, \eqref{eq:GCP} and \eqref{eq:entr}, in the framework of GCP solutions in the sense of Definition \ref{def:GCPsln} and the corresponding wave function $\psi$ provided by Proposition \ref{prop:lift2}.

Let $(\rho,v)$ be a GCP solution of the QHD system \eqref{eq:QHD_md} on the time interval $[0,T)$,  such that $\inf_{t,x}\rho\geq\delta>0$ and satisfies conditions \eqref{eq:condphase} and \eqref{eq:avrg0}. Assume also that $v$ satisfies the phase compatibility conditions \eqref{eq:condphase} and \eqref{eq:avrg0}. Then, by Proposition \ref{prop:lift2} and Remark \ref{rem:avrg0},  there exists a wave function $\psi\in L^\infty_t([0,T),H^2_x(\T^2))$ associated with $(\rho,v)$ in the sense of \eqref{eq:assoc}, such that $\psi$ solves the Cauchy problem for the Schr\"odinger--Langevin equation
\begin{equation}\label{eq:cauchy_NLS_1}
\begin{cases}
i\d_t\psi+\frac12\triangle\psi=f'(|\psi|^2)\psi+\frac{1}{\tau}S\psi+V\psi,\\
-\triangle V=|\psi|^2-M_0,\quad \psi(0)=\psi_0\in H^2_x(\T^2).
\end{cases}
\end{equation}
By the Madelung representation, we have
\[
|\nabla\psi|^2=|\nabla\sqrt\rho|^2+\rho |v|^2.
\]
We also recall from \eqref{eq:chem} that the chemical potential $\mu$ and the quantity $\sigma$ are given respectively by
\begin{equation}\label{eq:mu_psi}
\mu=-\frac12\RE\left(\frac{\triangle\psi}{\psi}\right)+f'(|\psi|^2)+V,\quad\sigma=-\frac12\IM\left(\frac{\triangle\psi}{\psi}\right).
\end{equation}

First, we have the following proposition concerning the time derivative of the main functionals $M(t)$, $E(t)$ and $I(t)$ introduced in Section 2.

\begin{prop}\label{prop:3.2}
Let $\psi\in L^\infty_t([0,T);H^2_x(\T^2))$ be a solution to the Cauchy problem \eqref{eq:cauchy_NLS_1} such that $\inf_{t,x}|\psi|>0$ and let $(\rho, v)$ be the hydrodynamic variables associated with $\psi$ in the sense of \eqref{eq:assoc}. Then the associated hydrodynamic variables satisfy the following properties:
\begin{itemize}
\item[(1)] the total mass is conserved
\begin{equation}\label{eq:cons_mass}
M(t)=\int_{\T^2}\rho(t,x) dx\equiv M_0,
\end{equation}
and the total energy defined by \eqref{eq:en_md} satisfies
\begin{equation}\label{eq:en_disp}
E(t)+\frac{1}{\tau}\int_0^t\int_{\T^2} \rho |v|^2 dxds=E(0)=E_0;
\end{equation}

\item[(2)] The function $t\mapsto I(t)$ is differentiable for almost every $t\in[0,T)$ and satisfies
\begin{equation}\label{eq:rl_I}
\frac{d}{dt} I(t)+\frac{1}{\tau}\int_{\T^2}\rho \sigma^2 dx=\int_{\T^2} \mu\d_tp(\rho)dx+\int_{\T^2}\rho\mu \d_tVdx-\frac{1}{\tau}\int_{\T^2}\rho |v|^2\mu dx,
\end{equation}
where $p(\rho)$ is the pressure given by \eqref{eq:pres}.

\end{itemize}
\end{prop}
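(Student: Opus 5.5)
We will prove Proposition \ref{prop:3.2} by first working with smooth solutions and then passing to $H^2$ solutions by approximation. Concretely, we regularize $\psi_0$ to $\psi_0^\eps\in H^\infty_x$ with $\inf|\psi_0^\eps|\geq \frac12\inf|\psi_0|$. The lower bound on $|\psi|$ makes the phase $S=\frac1i\log(\psi/|\psi|)$ a smooth functional of $\psi$ (cf.\ Remark \ref{rem:equivS}), so the nonlinearity $\frac1\tau S\psi$ is locally Lipschitz in $H^s_x$ for $s\geq2$ near such data. Hence the smooth solutions exist on a common interval, stay bounded away from zero, and converge to $\psi$ in $\mathcal{C}_tH^2_x$ by a Gronwall argument in $H^2_x$ exploiting $\inf|\psi|>0$. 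All identities below are integrated in time first, so that the passage to the limit only requires convergence in $L^\infty_tH^2_x$. The differentiability a.e.\ of $I$ then follows from the integrated identity, since the right-hand side of \eqref{eq:rl_I} is in $L^1_t$: $\mu,\sigma\in L^\infty_tL^2_x$, $\rho|v|^2\in L^\infty_tL^2_x$ by $H^2\subset W^{1,4}$, and $\d_tp,\d_tV\in L^\infty_t L^2_x$.

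For part (1), mass conservation follows by multiplying the equation by $\bar\psi$ and taking imaginary parts. The potential term is real, so $S$ contributes nothing. For the energy, we use the hydrodynamic form \eqref{eq:Ham_form}. With $E=E(\rho,v)$ we have
\[
\frac{d}{dt}E=\int\mu\,\d_t\rho+J\cdot\d_tv\,dx=\int \mu(-\diver J)+J\cdot(-\nabla\mu-\tfrac1\tau v)\,dx=-\frac1\tau\int\rho|v|^2dx.
\]
Equivalently, we may compute $\RE\int \overline{\d_t\psi}(\ldots)$ directly on the Schr\"odinger side, using the identity $\RE\int \frac1\tau S\psi\overline{\d_t\psi}=\frac1\tau\int \rho S\d_tS\ldots$. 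We find the hydrodynamic route cleaner, since the term $\frac1\tau S$ in the phase equation \eqref{eq:qHJ} disappears after taking gradients.

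For part (2), we note that $\sqrt\rho\mu=\RE(\ldots)$ and $\sqrt\rho\sigma$ combine into the single complex quantity
\[
\Phi:=\sqrt\rho(\mu+i\sigma)e^{iS}=-\tfrac12\triangle\psi+f'(|\psi|^2)\psi+V\psi,
\]
with $I=\frac12\|\Phi\|_{L^2}^2$. Since $\sigma=\d_t\log\sqrt\rho$, the Schr\"odinger equation reads $\Phi=i\d_t\psi-\frac1\tau S\psi$. So we differentiate $\frac12\int|i\d_t\psi-\frac1\tau S\psi|^2$, or more conveniently we work hydrodynamically with $\sqrt\rho\mu$ and $\d_t\sqrt\rho$. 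Differentiating the equation in time and setting $w=\d_t\psi$, we get
\[
i\d_tw+\tfrac12\triangle w=\d_t\big(f'(\rho)+V\big)\psi+(f'+V)w+\tfrac1\tau(\d_tS\,\psi+Sw).
\]
The main computation pairs this with $\bar w$ and takes imaginary parts. The self-adjoint parts cancel, leaving $\int\d_t(f'(\rho)+V)\IM(\psi\bar w)$ together with the collision terms. We then use $\IM(\bar\psi w)=\ldots$ to express everything through $\rho,\mu,\sigma,v$ via $\d_tS=-\mu-\frac1\tau S$ and $\d_t\rho=2\rho\sigma$. The $\d_t f'(\rho)$ term turns into $\int\mu\,\d_tp(\rho)$ through the relation $\rho\,\d_t f'(\rho)=\d_tp(\rho)$, which follows from \eqref{eq:pres}. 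The $V$ term gives $\int\rho\mu\d_tV$.

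We expect the main obstacle to be the bookkeeping of the collision terms. The $S$ terms must combine to give exactly $-\frac1\tau\int\rho\sigma^2-\frac1\tau\int\rho|v|^2\mu$, with no leftover term involving the non-periodic or unbounded $S$ itself. To handle this, we will check that $S$ appears only through $\nabla S=v$ and $\d_tS+\frac1\tau S=-\mu$. Indeed, $\frac{d}{dt}\frac12\int\rho\mu^2$ contains $\int\rho\mu\,\d_t\mu$, and $\d_t\mu$ involves $v\cdot\d_tv=-v\cdot\nabla\mu-\frac1\tau|v|^2$, which produces $-\frac1\tau\int\rho|v|^2\mu$. The remaining terms cancel by integration by parts, using the continuity equation and the linearized quantum operator, which is symmetric with respect to the pair $(\sqrt\rho\mu,\d_t\sqrt\rho)$. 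The damping term $-\frac1\tau\int\rho\sigma^2$ arises from $\d_t\sigma$ through the continuity equation, since $\diver(\d_tJ)$ contains $-\frac1\tau\diver J=\frac2\tau\rho\sigma$.
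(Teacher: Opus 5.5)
Your proposal contains a workable route, but it differs from the paper's. The computation you label as the main one is aimed at the wrong functional.

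\textbf{Comparison of routes.} The paper never constructs smooth solutions. It mollifies the given $H^2$ solution in space and uses the multiplier $-\triangle\bar\psi$ for the energy and $\triangle^2\bar\psi_\eps$ for the higher-order identity, i.e.\ it differentiates $\frac12\|\triangle\psi_\eps\|_{L^2}^2$. It then splits $|\triangle\psi|^2=\rho^{-1}[(\RE(\bar\psi\triangle\psi)-2W\rho)^2+\IM(\bar\psi\triangle\psi)^2]-4W^2\rho+4W\RE(\bar\psi\triangle\psi)$ with $W=f'(\rho)+V$, and differentiates the lower-order $W$-corrections separately. Finally it lets $\eps\to0$ and only then converts to $\rho,\mu,\sigma,v$. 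This needs nothing beyond the regularity of the given solution.

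Your last paragraph is instead the Appendix's hydrodynamic computation, and it is correct for smooth solutions. The transport terms cancel by continuity, the cross terms $\pm\frac12\int\rho\nabla\mu\cdot\nabla\sigma\,dx$ cancel, and the damping comes from $\rho\d_t\sigma+\rho v\cdot\nabla\sigma+\frac1\tau\rho\sigma=\frac12\diver(\rho\nabla\mu)$. The paper leaves this computation formal because mollifying $\rho,v,\mu$ does not close; you sidestep that by approximating at the level of $\psi$.

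The price is an $H^s$ persistence and $H^2$ stability theory for the Schr\"odinger--Langevin equation. You need a consistent branch for the approximate phases, e.g.\ $S^\eps-S=\IM\log(\psi^\eps/\psi)$ with the principal branch, valid while $\|\psi^\eps/\psi-1\|_{L^\infty}<1$. You also need a Gronwall estimate run against the given $\psi$, which is only known in $L^\infty_tH^2_x$. The paper proves local $H^2$ existence only under \eqref{eq:ini_1} and merely sketches the contraction, so this is where your argument carries its weight.

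\textbf{The misdirected step.} Pairing the equation for $w=\d_t\psi$ with $\bar w$ computes $\frac12\frac{d}{dt}\|\d_t\psi\|_{L^2}^2$, not $\frac{d}{dt}I$. Since $\IM(\psi\bar w)=-\rho\d_tS=\rho(\mu+\frac1\tau S)$, you get $\frac12\frac{d}{dt}\|w\|_{L^2}^2=\int\rho(\mu+\frac1\tau S)\d_tW\,dx-\frac1\tau\int\rho(\mu+\frac1\tau S)^2dx$. Meanwhile $\frac12\|w\|_{L^2}^2=I+\frac1\tau\int\rho\mu S\,dx+\frac1{2\tau^2}\int\rho S^2dx$. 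So neither $-\frac1\tau\int\rho\sigma^2$ nor $-\frac1\tau\int\rho|v|^2\mu$ appears, and $S$ does not drop out. The ``bookkeeping'' you anticipate is an artifact of this pairing. ($S$ is in fact periodic here, by \eqref{eq:avrg0}, and lies in $L^\infty_tH^2_x$.)

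The quantity you had already found is the right one. From $\Phi=-\frac12\triangle\psi+W\psi$, $\d_t\psi=-i(\Phi+\frac1\tau S\psi)$ and $\frac12\triangle(S\psi)-WS\psi=-S\Phi+v\cdot\nabla\psi+\frac12(\diver v)\psi$, one gets
\[
i\d_t\Phi+\tfrac12\triangle\Phi=\big(W+\tfrac1\tau S\big)\Phi+i\,\d_tW\,\psi-\tfrac1\tau\big(v\cdot\nabla\psi+\tfrac12(\diver v)\psi\big).
\]
Pairing with $\bar\Phi$ kills the real potential $W+\frac1\tau S$. Then $\RE(\psi\bar\Phi)=\rho\mu$ and $\IM[(v\cdot\nabla\psi+\frac12(\diver v)\psi)\bar\Phi]=\rho|v|^2\mu+\rho\sigma^2$ give \eqref{eq:rl_I} directly. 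The forcing lies in $L^\infty_tL^2_x$ when $\psi\in L^\infty_tH^2_x$ and $\inf|\psi|>0$. Hence this $L^2$ identity can be justified by plain spatial mollification, with no smooth approximating solutions at all.

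Finally, in your Schr\"odinger-side energy remark, $\RE\int\frac1\tau S\psi\overline{\d_t\psi}\,dx=\frac1{2\tau}\int S\d_t\rho\,dx=\frac1{2\tau}\int\rho|v|^2dx$, not $\frac1\tau\int\rho S\d_tS\,dx$. With this correction that route closes as well.
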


\begin{proof}
Using equation \eqref{eq:cauchy_NLS_1}, one readily checks that the density $\rho=|\psi|^2$ satisfies the continuity equation
\[
\d_t\rho+\diver J=0,
\]
which directly implies the conservation of the total mass. To prove the energy relation \eqref{eq:en_disp}, we multiply the Schr\"odinger equation in \eqref{eq:cauchy_NLS_1}
by $-\triangle\bar\psi$, take the imaginary part, and integrate by parts to obtain
\[
\frac12\frac{d}{dt}\int_{\T^2}|\nabla\psi|^2dx=-\int_{\T^2}[f'(\rho)+\tau^{-1}S+V]\IM(\psi\triangle\bar\psi)dx,
\]
where
\[
\IM(\psi\triangle\bar\psi)=\diver\IM(\psi\nabla\bar\psi)=-\diver(\rho v)=\d_t\rho.
\]
Therefore
\begin{align*}
\frac{d}{dt}\int_{\T^2}\frac12|\nabla\psi|^2+f(\rho)dx=&\tau^{-1}\int_{\T^2} S\diver(\rho v) dx-\int_{\T^2}V\d_t\rho dx\\
=&-\tau^{-1}\int_{\T^2}\rho |v|^2 dx+\int_{\T^2} V\d_t(\triangle V-\mathcal{C}(x))dx\\
=&-\tau^{-1}\int_{\T^2}\rho |v|^2 dx-\frac{d}{dt}\int_{\T^2}\frac12|\nabla V|^2dx.
\end{align*}

In order to prove property (2), we first show that, for $\psi\in L^\infty_tH^2_x$ with $\inf_{t,x}|\psi|> 0$, the following identity holds: 
\begin{equation}\label{eq:prop_20_1}
\begin{aligned}
\frac12\frac{d}{dt}\int_{\T^2}\rho^{-1}[(\RE(\bar\psi\triangle\psi)-&2W\rho)^2+\IM(\bar\psi\triangle\psi)^2]dx\\
=&-\int_{\T^2}\IM[\triangle(W\bar\psi+\tau^{-1}S\bar\psi)\triangle\psi]dx\\
&+2\int_{\T^2} [\rho\d_t(W^2)+W^2\d_t \rho ]dx\\
&-2\int_{\T^2}\d_tW\RE(\bar\psi\triangle\psi)dx-4\int_{\T^2} W\RE(\d_t\bar\psi\triangle\psi)dx\\
&-4\int_{\T^2} \nabla W\cdot\RE(\nabla\bar\psi\d_t\psi)dx-2\int_{\T^2} \triangle W\RE(\bar\psi\d_t\psi)dx,
\end{aligned}
\end{equation}
where we have set $W=f'(\rho)+V$. However, to establish this identity rigorously, we need to introduce the standard sequence of mollifiers $\{\chi_\eps\}_{\eps>0}$ on $\T^2$ and let $g_\eps=g\ast \chi_\eps$ for any $g\in L^2_x(\T^2)$. We mollify equation \eqref{eq:cauchy_NLS_1}, multiply it by $\triangle^2\bar\psi_\eps$ and integrate by parts in the imaginary part, which gives
\begin{equation}\label{eq:3.16}
\frac12\frac{d}{dt}\int_{\T^2}|\triangle\psi_\eps|^2dx=-\int_{\T^2}\IM[\triangle(W\bar\psi+\tau^{-1}S\bar\psi)_\eps\triangle\psi_\eps]dx,
\end{equation}
where the function $S$ is defined by \eqref{eq:defS}. Using the algebra property of $H^2(\T^2)$, the elliptic regularity for $V$, and the regularity of $S$ given by Definition \ref{def:phase}, we have
\[
W\bar\psi+\tau^{-1}S\bar\psi\in L^\infty_tH^2_x.
\]
Now we expand the integrand in the left-hand side of \eqref{eq:3.16} as
\begin{align*}
|\triangle\psi_\eps|^2=|\psi_\eps|^{-2}|\bar\psi_\eps\triangle\psi_\eps|^2=\rho_\eps^{-1}[\RE(\bar\psi_\eps\triangle\psi_\eps)^2+\IM(\bar\psi_\eps\triangle\psi_\eps)^2],
\end{align*}
where $\rho_\eps=|\psi_\eps|^2$, which should not be confused with $\rho\ast\chi_\eps$. To match the chemical potential \eqref{eq:mu_psi}, we write
\begin{align*}
[\RE(\bar\psi_\eps\triangle\psi_\eps)]^2=&[\RE(\bar\psi_\eps\triangle\psi_\eps)-2W_\eps\rho_\eps]^2\\
&-4W_\eps^2\rho_\eps^2+4W_\eps\rho_\eps\RE(\bar\psi_\eps\triangle\psi_\eps).
\end{align*}
Thus we have
\begin{align*}
\frac12\frac{d}{dt}\int_{\T^2}|\triangle\psi_\eps|^2dx=&\frac12\frac{d}{dt}\int_{\T^2} \rho_\eps^{-1}[\RE(\bar\psi_\eps\triangle\psi_\eps)-2W_\eps\rho_\eps]^2dx\\
&+\frac12\frac{d}{dt}\int_{\T^2} \rho_\eps^{-1}\IM(\bar\psi_\eps\triangle\psi_\eps)^2dx\\
&+2\frac{d}{dt}\int_{\T^2} [W_\eps\RE(\bar\psi_\eps\triangle\psi_\eps)-W_\eps^2\rho_\eps]dx.
\end{align*}
Now we compute the time derivative of the last integral,
\begin{align*}
\frac{d}{dt}\int_{\T^2} W_\eps\RE(\bar\psi_\eps\triangle\psi_\eps)dx=&\int_{\T^2}\d_tW_\eps\RE(\bar\psi_\eps\triangle\psi_\eps)dx+\int_{\T^2} W_\eps\RE(\d_t\bar\psi_\eps\triangle\psi_\eps)dx\\
&+\int_{\T^2} W_\eps\RE(\bar\psi_\eps\triangle\d_t\psi_\eps)dx\\
=&\int_{\T^2} \d_tW_\eps\RE(\bar\psi_\eps\triangle\psi_\eps)dx+2\int_{\T^2} W_\eps\RE(\d_t\bar\psi_\eps\triangle\psi_\eps)dx\\
&+2\int_{\T^2} \nabla W_\eps\RE(\nabla\bar\psi_\eps\d_t\psi_\eps)dx+\int_{\T^2} \triangle W_\eps\RE(\bar\psi_\eps\d_t\psi_\eps)dx,
\end{align*}
and
\[
\frac{d}{dt}\int_{\T^2} W_\eps^2\rho_\eps dx=\int_{\T^2} [\d_t (W_\eps^2)\rho_\eps+W_\eps^2\d_t \rho_\eps] dx.
\]
Since $\psi\in L^\infty_tH^2_x$ with $\inf_{t,x}|\psi|>0$ and, by equation \eqref{eq:cauchy_NLS_1}, $\d_t\psi\in L^\infty_tL^2_x$, the corresponding mollified quantities converge strongly in the required spaces. We may therefore pass to the limit as $\eps\to0$ and obtain \eqref{eq:prop_20_1}.

In order to reduce \eqref{eq:prop_20_1} to \eqref{eq:rl_I}, it follows from \eqref{eq:mu_psi} that
\[
\RE(\bar\psi\triangle\psi)=-2\rho\mu+2\rho W,\quad \IM(\bar\psi\triangle\psi)=-2\rho\sigma,
\]
and therefore 
\[
\frac12\frac{d}{dt}\int_{\T^2}\rho^{-1}[(\RE(\bar\psi\triangle\psi)-2\rho W)^2+\IM(\bar\psi\triangle\psi)^2]dx=4\frac{d}{dt}I(t).
\]
We further compute that
\begin{align*}
-\IM[\triangle(W\bar\psi+\tau^{-1}S\bar\psi)\triangle\psi]=&-[\triangle W+\tau^{-1}\diver v]\IM(\bar\psi\triangle\psi)\\
&-2[\nabla W+\tau^{-1}v]\cdot\IM(\nabla\bar\psi\triangle\psi),
\end{align*}
where we use $\nabla S=v$, and 
\[
-2\d_t W\RE(\bar\psi\triangle\psi)=4\d_t W\rho\mu-2\rho\d_t (W^2).
\]
By equation \eqref{eq:cauchy_NLS_1}, we also have
\[
-4W\RE(\d_t\bar\psi\triangle\psi)=4W(W+\tau^{-1}S)\IM(\bar\psi\triangle\psi),
\]
\[
-4\nabla W \cdot\RE(\nabla\bar\psi\d_t\psi) =2\nabla W \cdot \IM(\nabla\bar\psi\triangle\psi) +4\nabla W\cdot(W+\tau^{-1}S)\IM(\bar\psi\nabla\psi),
\]
and
\[
-2\triangle W\RE(\bar\psi\d_t\psi)=\triangle W\IM(\bar\psi\triangle \psi).
\]
Notice that
\[
\IM(\bar\psi\triangle\psi)=\diver\IM(\bar\psi\nabla\psi)=\diver(\rho v)=-\d_t\rho,
\]
then by summarising the identities above and integrating by parts, we obtain
\begin{align*}
4\frac{d}{dt}I(t)=&4\int_{\T^2}\rho\mu\d_tWdx-\tau^{-1}\int_{\T^2}\diver v\diver(\rho v)dx\\
&-4\tau^{-1}\int_{\T^2}\rho |v|^2Wdx-2\tau^{-1}\int_{\T^2} v\cdot\IM(\nabla\bar\psi\triangle\psi)dx.
\end{align*}
Last, by using
\begin{align*}
\IM(\nabla\bar\psi\triangle\psi)=&\rho\IM\left(\frac{\nabla\bar\psi}{\bar\psi}\frac{\triangle\psi}{\psi}\right)\\
=&\rho\left[\IM\left(\frac{\nabla\bar\psi}{\bar\psi}\right)\RE\left(\frac{\triangle\psi}{\psi}\right)+\RE\left(\frac{\nabla\bar\psi}{\bar\psi}\right)\IM\left(\frac{\triangle\psi}{\psi}\right)\right]\\
=&\rho[v(2\mu-2W)+\frac12\rho^{-2}\nabla\rho\diver(\rho v)],
\end{align*}
it follows that
\[
\frac{d}{dt}I(t)=\int_{\T^2}\rho\mu\d_tWdx-\tau^{-1}\int_{\T^2}\rho |v|^2\mu dx-\frac{1}{4\tau}\int_{\T^2}\frac{[\diver(\rho v)]^2}{\rho}dx,
\]
which proves \eqref{eq:rl_I} by using \eqref{eq:pres} and
\[
\frac{[\diver(\rho v)]^2}{\rho}=\frac{(\d_t\rho)^2}{\rho}=4\rho\sigma^2.
\]

\end{proof}

We also need the following computation of the time derivative of the physical entropy 
\[
H(\rho)=\int_{\T^2}\rho\log\left(\frac{\rho}{M_0}\right)dx.
\]
\begin{prop}\label{prop:dtentr}
Let $(\rho,v)$ be a GCP solution of \eqref{eq:QHD_md} such that $\inf_{t,x}\rho>0$. Then the time derivative of $H(\rho)$ is given by
\begin{equation}\label{eq:dtH}
\begin{aligned}
\frac{d}{dt}H(\rho)=&-\tau\int_{\T^2}\rho|\nabla^2\log\sqrt\rho|^2 dx-4\tau\int_{\T^2}p'(\rho)|\nabla\sqrt\rho|^2dx\\
&-\tau\frac{d}{dt}\int_{\T^2}\log\rho\d_t\rho dx+4\tau\int_{\T^2}\rho\sigma^2 dx\\
&+\tau\int_{\T^2}(\rho v\otimes v):\nabla^2\log\rho dx-\tau\int_{\T^2}(\rho-M_0)^2 dx.
\end{aligned}
\end{equation}
As a consequence, we have the estimate
\begin{equation}\label{eq:ineq_entr}
\begin{aligned}
\frac{d}{dt}&[H(\rho)+\tau\int_{\T^2}\log\rho\d_t\rho dx]+\frac{\tau}{2}\int_{\T^2}\rho|\nabla^2\log\sqrt\rho|^2dx\\
+&4\tau\int_{\T^2} p'(\rho)|\nabla\sqrt\rho|^2dx+\tau\int_{\T^2}(\rho-M_0)^2dx\leq4\tau\int_{\T^2}\rho\sigma^2dx+2\tau\int_{\T^2} \rho |v|^4dx.
\end{aligned}
\end{equation}
\end{prop}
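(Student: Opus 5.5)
The plan is a direct computation: differentiate $H(\rho)$ in time, replace $J=\rho v$ using the momentum equation, and then justify the formal computation using the $H^2$ regularity supplied by Proposition \ref{prop:lift2}.

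\textbf{First step: use the momentum equation.} Mass is conserved by \eqref{eq:cons_mass}, so $\int_{\T^2}\d_t\rho\,dx=0$. Together with the continuity equation this gives
\[
\frac{d}{dt}H(\rho)=\int_{\T^2}\log\rho\,\d_t\rho\,dx=\int_{\T^2}\nabla\log\rho\cdot J\,dx .
\]
I then solve the momentum equation in \eqref{eq:QHD} for $J$:
\[
J=-\tau\Big(\d_tJ+\diver(\rho v\otimes v)+\nabla p(\rho)+\rho\nabla V-\tfrac14\diver(\rho\nabla^2\log\rho)\Big),
\]
using the stress-tensor form of the quantum term from \eqref{eq:bohm}. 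Each term is then paired with $\nabla\log\rho$ and integrated by parts.

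\textbf{Second step: the individual terms.}
\begin{itemize}
\item Time-derivative term. I write $-\tau\int\nabla\log\rho\cdot\d_tJ=-\tau\frac{d}{dt}\int\nabla\log\rho\cdot J+\tau\int\d_t\nabla\log\rho\cdot J$. Since $\int\nabla\log\rho\cdot J=\int\log\rho\,\d_t\rho$ and $\tau\int\d_t\nabla\log\rho\cdot J=\tau\int(\d_t\rho)^2/\rho=4\tau\int\rho\sigma^2$, this yields the third and fourth terms of \eqref{eq:dtH}.
\item Convection term. One integration by parts gives $\tau\int(\rho v\otimes v):\nabla^2\log\rho$.
\item Pressure term. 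It equals $-\tau\int p'(\rho)|\nabla\rho|^2/\rho=-4\tau\int p'(\rho)|\nabla\sqrt\rho|^2$.
\item Poisson term. It equals $-\tau\int\nabla\rho\cdot\nabla V=\tau\int\rho\triangle V=-\tau\int\rho(\rho-M_0)=-\tau\int(\rho-M_0)^2$, where the last equality uses $\int(\rho-M_0)=0$.
\item Quantum term. It equals $-\frac\tau4\int\rho|\nabla^2\log\rho|^2=-\tau\int\rho|\nabla^2\log\sqrt\rho|^2$.
\end{itemize}
Collecting these terms gives \eqref{eq:dtH}.

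\textbf{Third step: the inequality.} Estimate \eqref{eq:ineq_entr} follows from Cauchy--Schwarz and Young's inequality:
\[
\tau\int\rho\,v\otimes v:\nabla^2\log\rho=2\tau\int\rho\,v\otimes v:\nabla^2\log\sqrt\rho\le\frac\tau2\int\rho|\nabla^2\log\sqrt\rho|^2+2\tau\int\rho|v|^4 .
\]
The first piece is absorbed into the dissipation on the left-hand side, and $-\tau\frac{d}{dt}\int\log\rho\,\d_t\rho$ is moved to the left as well.

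\textbf{Main obstacle: rigor at GCP regularity.} The momentum equation holds only in the sense of distributions, so testing it with $\nabla\log\rho$ and moving $\d_t$ across the product must be justified. I plan to argue as follows.
\begin{itemize}
\item By Proposition \ref{prop:lift2}, $\psi\in L^\infty_tH^2_x$ with $|\psi|\ge\sqrt\delta$. Hence $\sqrt\rho\in L^\infty_tH^2_x$, and $v=\IM(\nabla\psi/\psi)\in L^\infty_tH^1_x\subset L^\infty_tL^4_x$ in two dimensions.
\item Also $\d_t\psi\in L^\infty_tL^2_x$ by \eqref{eq:cauchy_NLS_1}. Consequently $\nabla\log\rho\in L^\infty_tH^1_x$, $\rho|\nabla^2\log\sqrt\rho|^2\in L^\infty_tL^1_x$ (this uses $|\nabla\sqrt\rho|^4\in L^1$ via $H^1\subset L^4$), $\rho|v|^4\in L^1$ and $\d_t\log\rho\in L^\infty_tL^2_x$.
\end{itemize}
With these bounds, I mollify in $x$ and $t$, carry out the computation for the mollified quantities as in the proof of Proposition \ref{prop:3.2}, and pass to the limit $\eps\to0$. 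Each pairing converges by strong convergence in the dual spaces listed above, and the identity is obtained for almost every $t$, or in integrated form in time.
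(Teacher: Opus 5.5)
Your proposal is correct and follows essentially the same route as the paper. Like the paper, you substitute the momentum equation into $\frac{d}{dt}H=\int\log\rho\,\d_t\rho$ (you pair $J$ with $\nabla\log\rho$; the paper writes $-\int\log\rho_\eps\diver(\rho v)_\eps$, which differs by one integration by parts), split the $\d_t J$ term into $-\tau\frac{d}{dt}\int\log\rho\,\d_t\rho+4\tau\int\rho\sigma^2$, use the Poisson equation, and close with the same Young inequality.

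The only difference is the justification. You base the limit passage on the $H^2$ regularity of the lifted wave function, which needs the standing phase conditions \eqref{eq:condphase}--\eqref{eq:avrg0} of Section~\ref{sect:dt}. The paper instead mollifies only $\rho$ and appeals more loosely to the GCP bounds, so your version is, if anything, more explicit.
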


\begin{proof}
To rigorously establish the computation, we again need to use the standard mollifiers $\{\chi_\eps\}_{\eps>0}$. Let $\rho_{\eps}=\rho\ast\chi_\eps$, and consider the entropy $H(\rho_{\eps})$ of the mollified density. By mollifying equation \eqref{eq:QHD_md} and using \eqref{eq:bohm} to write the quantum term in logarithm form, the time derivative of $H(\rho_{\eps})$ is computed as
\begin{align*}
\frac{d}{dt}H(\rho_{\eps})=&\int_{\T^2}\log\rho_{\eps}\d_t\rho_{\eps} dx=-\int_{\T^2}\log\rho_{\eps}\diver(\rho v)_\eps dx\\
=&-\frac{\tau}{2}\int_{\T^2}\log\rho_{\eps}\diver\cdot\diver(\rho\nabla^2\log\sqrt\rho)_\eps dx+\tau\int_{\T^2}\log\rho_{\eps}\triangle p(\rho)_\eps dx\\
&+\tau\int_{\T^2}\log\rho_{\eps}\d_t\diver(\rho v)_\eps dx+\tau\int_{\T^2}\log\rho_{\eps}\diver\cdot\diver(\rho v\otimes v)_\eps dx\\
&+\tau\int_{\T^2}\log\rho_{\eps}\diver(\rho \nabla V)_\eps dx\\
=&-\tau\int_{\T^2}(\rho\nabla^2\log\sqrt\rho)_\eps :\nabla^2\log\sqrt\rho_{\eps}dx-\tau\int_{\T^2} \nabla p(\rho)_\eps\cdot\nabla\log \rho_{\eps}dx\\
&-\tau\int_{\T^2}\log\rho_{\eps}\d_t^2\rho_{\eps} dx+\tau\int_{\T^2}(\rho v\otimes v)_\eps:\nabla^2\log\rho_{\eps} dx\\
&-\tau\int_{\T^2}\frac{\nabla\rho_{\eps}}{\rho_{\eps}}\cdot(\rho \nabla V)_\eps dx.
\end{align*}
Here we can further write 
\[
-\tau\int_{\T^2}\log\rho_{\eps}\d_t^2\rho_{\eps} dx=-\tau\frac{d}{dt}\int_{\T^2}\log\rho_{\eps}\d_t\rho_{\eps} dx+\tau\int_{\T^2}\d_t\log\rho_{\eps}\d_t\rho_{\eps} dx.
\]
We now justify the passage to the limit as $\eps\to0$. By the definition of GCP solutions and the strict positivity assumption $\inf_{t,x}\rho>0$, all the quantities appearing in the right-hand side of the regularized identity are well defined as integrable functions. In particular, the continuity equation gives $\d_t\rho=-\diver(\rho v)$ in the sense of distributions, while the bounds in Definition \ref{def:GCPsln} ensure that the products obtained after mollification are locally uniformly integrable on $[0,T)\times\T^2$. Therefore, the regularized quantities converge to the corresponding non-regularized ones in the sense of distributions, and the uniform bounds allow us to pass to the limit in the integral identity. Letting $\eps\to0$, we obtain
\begin{align*}
\frac{d}{dt}H(\rho)=&-\tau\int_{\T^2}\rho|\nabla^2\log\sqrt\rho|^2 dx-\tau\int_{\T^2} \nabla p(\rho)\cdot\nabla\log \rho dx\\
&-\tau\frac{d}{dt}\int_{\T^2}\log\rho\d_t\rho dx+\tau\int_{\T^2}\d_t\log\rho\d_t\rho dx\\
&+\tau\int_{\T^2}(\rho v\otimes v):\nabla^2\log\rho dx-\tau\int_{\T^2}\nabla\rho\cdot\nabla V dx\\
=&-\tau\int_{\T^2}\rho|\nabla^2\log\sqrt\rho|^2 dx-4\tau\int_{\T^2} p'(\rho)|\nabla\sqrt\rho|^2dx\\
&-\tau\frac{d}{dt}\int_{\T^2}\log\rho\d_t\rho dx+4\tau\int_{\T^2}\rho\sigma^2 dx\\
&+\tau\int_{\T^2}(\rho v\otimes v):\nabla^2\log\rho dx+\tau\int_{\T^2}\rho\triangle V dx.
\end{align*}
Last, by using the Poisson equation 
\[
-\triangle V=\rho-M_0
\]
and $M_0=\int_{\T^2}\rho dx$, we obtain \eqref{eq:dtH}.

The inequality \eqref{eq:ineq_entr} follows from the estimate
\begin{align*}
\tau\left|\int_{\T^2}(\rho v\otimes v):\nabla^2\log\rho dx\right|\leq& 2\tau\|\rho^\frac14 v\|_{L^4_x}^2\|\sqrt\rho\nabla^2\log\sqrt\rho\|_{L^2_x}\\
\leq &2\tau\int_{\T^2} \rho |v|^4dx+\frac{\tau}{2}\int_{\T^2}\rho|\nabla^2\log\sqrt\rho|^2dx.
\end{align*}
\end{proof}

Finally, we establish the following lemma, which shows that the norms $\|\nabla^2\sqrt{\rho}\|_{L^2_x}$ and $\|\nabla\rho^{\frac14}\|_{L^4_x}$ are controlled by the dissipation term $\int_{\T^2}\rho|\nabla^2\log\sqrt{\rho}|^2dx$ appearing in \eqref{eq:dtH}.

\begin{lem}\label{lem:H^2log}
Assume that $\rho\in H^2_x(\T^2)$ and $\rho>0$. Then
\begin{equation}\label{eq:logH2}
\int_{\T^2}\frac14|\triangle\sqrt{\rho}|^2dx+\int_{\T^2}\frac12|\nabla^2\sqrt{\rho}|^2+\frac{|\nabla\sqrt\rho|^4}{4\rho}dx\leq  \int_{\T^2}\rho|\nabla^2\log\sqrt{\rho}|^2dx.
\end{equation}
\end{lem}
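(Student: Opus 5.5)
The plan is to work with $u=\sqrt\rho$, which lies in $H^2_x$ and is bounded below, so every quotient below is well defined. I would first justify the computations for smooth positive $u$ and then pass to the limit by density. Since
\[
\nabla^2\log u=\frac{\nabla^2u}{u}-\frac{\nabla u\otimes\nabla u}{u^2},
\]
the right-hand side of \eqref{eq:logH2} equals
\[
T:=\int_{\T^2}\rho|\nabla^2\log\sqrt\rho|^2dx=\int_{\T^2}\Bigl|\nabla^2u-\frac{\nabla u\otimes\nabla u}{u}\Bigr|^2dx .
\]
Introduce
\[
A=\int_{\T^2}\frac{|\nabla u|^4}{u^2}dx,\qquad B=\int_{\T^2}\frac{|\nabla u|^2\triangle u}{u}dx,\qquad D=\int_{\T^2}|\triangle u|^2dx .
\]
On the torus, two integrations by parts give $\int_{\T^2}|\nabla^2u|^2dx=D$. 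The target inequality is $T\ge \frac34 D+\frac14A$, because $\frac14 D+\frac12\int|\nabla^2u|^2=\frac34D$ and $\frac{|\nabla\sqrt\rho|^4}{4\rho}=\frac{|\nabla u|^4}{4u^2}$.

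\textbf{First identity for $T$.} Expanding the square gives $T=D-2X+A$, where $X=\int\partial_{ij}u\,\partial_iu\,\partial_ju/u\,dx$. Writing $\partial_{ij}u\,\partial_iu=\partial_j(\tfrac12|\nabla u|^2)$ and integrating by parts,
\[
X=-\frac12\int_{\T^2}|\nabla u|^2\diver\Bigl(\frac{\nabla u}{u}\Bigr)dx=-\frac12B+\frac12A .
\]
Hence $T=D+B$.

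\textbf{Second bound for $T$.} This is where two dimensions enters. For any $2\times2$ matrix $K$ one has $|K|^2\ge \frac12(\operatorname{tr}K)^2$. Applying this pointwise to $K=\nabla^2u-\nabla u\otimes\nabla u/u$, whose trace is $\triangle u-|\nabla u|^2/u$, gives
\[
T\ge\frac12\int_{\T^2}\Bigl(\triangle u-\frac{|\nabla u|^2}{u}\Bigr)^2dx=\frac12D-B+\frac12A .
\]

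\textbf{Combining.} Average the identity $T=D+B$ with the lower bound $T\ge \frac12D-B+\frac12A$. The sign-indefinite term $B$ cancels, and we get
\[
T\ge \frac34D+\frac14A ,
\]
which is exactly \eqref{eq:logH2}.

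The main obstacle is recognising that $B$ has no sign and cannot be absorbed by Young's inequality pointwise. The resolution is to pair the exact integration-by-parts identity with the dimensional trace inequality, which carries $B$ with the opposite sign. The remaining technical point is the density argument: approximate $u$ by mollified $u_\eps\ge\inf u>0$, and pass to the limit in $A$ and $B$ using the embedding $H^2\subset W^{1,4}$ on $\T^2$ together with the lower bound on $u$.
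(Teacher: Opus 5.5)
Your proof is correct and is essentially the paper's argument. The paper expands both $\int_{\T^2}\rho|\nabla^2\log\sqrt\rho|^2dx$ and $\int_{\T^2}\rho(\triangle\log\sqrt\rho)^2dx$, links their cross terms by the same integration by parts you use, and adds twice the first to the second together with the two-dimensional trace bound $\int_{\T^2}\rho(\triangle\log\sqrt\rho)^2dx\le 2\int_{\T^2}\rho|\nabla^2\log\sqrt\rho|^2dx$. This is the same combination as your averaging of $T=D+B$ with $T\ge\frac12\int_{\T^2}(\triangle u-|\nabla u|^2/u)^2dx$; the only cosmetic differences are that you merge the two second-order terms via $\int_{\T^2}|\nabla^2u|^2dx=\int_{\T^2}|\triangle u|^2dx$ while the paper keeps them separate, and that you sketch a density justification the paper omits.
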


\begin{proof}
Direct computation gives
\[
\sqrt{\rho}\nabla^2\log\sqrt{\rho}=\sqrt{\rho}\nabla\left(\frac{\nabla\sqrt\rho}{\sqrt\rho}\right)=\nabla^2\sqrt{\rho}-\frac{\nabla\sqrt\rho\otimes\nabla\sqrt\rho}{\sqrt\rho},
\]
and
\[
\sqrt{\rho}\triangle\log\sqrt{\rho}=\sqrt{\rho}\diver\left(\frac{\nabla\sqrt\rho}{\sqrt\rho}\right)=\triangle\sqrt{\rho}-\frac{|\nabla\sqrt\rho|^2}{\sqrt\rho}.
\]
Thus we have
\begin{align*}
D_1=\int_{\T^2}\rho|\nabla^2\log\sqrt{\rho}|^2dx=&\int_{\T^2}|\nabla^2\sqrt{\rho}|^2+\frac{|\nabla\sqrt\rho|^4}{\rho}dx-2\int_{\T^2}\nabla^2\sqrt{\rho}:\frac{\nabla\sqrt\rho\otimes\nabla\sqrt\rho}{\sqrt\rho}dx\\
=&A_1+B-I_1
\end{align*}
and
\begin{align*}
D_2=\int_{\T^2}\rho(\triangle\log\sqrt{\rho})^2dx=&\int_{\T^2}(\triangle\sqrt{\rho})^2+\frac{|\nabla\sqrt\rho|^4}{\rho}dx-2\int_{\T^2}\triangle\sqrt{\rho}\frac{|\nabla\sqrt\rho|^2}{\sqrt\rho}dx\\
=&A_2+B-I_2.
\end{align*}
By integrating by parts, it follows that
\begin{align*}
I_2=&-4\int_{\T^2}\nabla^2\sqrt{\rho}:\frac{\nabla\sqrt\rho\otimes\nabla\sqrt\rho}{\sqrt\rho}dx+2\int_{\T^2}\frac{|\nabla\sqrt\rho|^4}{\rho}dx\\
=&-2I_1+2B.
\end{align*}
Therefore, we have
\[
2D_1+D_2=2(A_1+B-I_1)+A_2-B+2I_1=2A_1+A_2+B.
\]
Finally, since $D_2\leq 2D_1$ in two space dimensions, we obtain \eqref{eq:logH2}.

\end{proof}

\section{Global well-posedness of GCP solutions on $\T^2$}\label{sect:exist2d}

The global well-posedness of GCP solutions to system \eqref{eq:QHD}, namely Theorem \ref{thm:glob2}, is proved by establishing the well-posedness of $H^2_x$ solutions to \eqref{eq:NLS} and by using the wave function lifting result in Proposition \ref{prop:lift2} to connect \eqref{eq:QHD} with \eqref{eq:NLS}.

In the two-dimensional case, we need the following logarithmic Sobolev-type inequality, which was proved in \cite{BG} on $\R^2$. Here we present an improved version of this inequality on $\T^2$.

\begin{lem}\label{lem:logsob}
For any $u\in H^2(\T^2)$ with $\int_{\T^2}u dx=0$, we have 
\begin{equation}\label{eq:logsob}
\|u\|_{L^\infty_x(\T^2)}\le C_0\|\nabla u\|_{L^2_x(\T^2)} \left[1+\left|\log \frac{\|\triangle u\|_{L^2_x(\T^2)}}{\|\nabla u\|_{L^2_x(\T^2)}}\right|^\frac12\right].
\end{equation}
\end{lem}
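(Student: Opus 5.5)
Since $u$ has zero mean, we can expand in Fourier series, $u(x)=\sum_{k\in\Z^2\setminus\{0\}}\hat u(k)e^{2\pi i k\cdot x}$, and use the pointwise bound $\|u\|_{L^\infty_x}\le\sum_{k\neq0}|\hat u(k)|$. Write $a=\|\nabla u\|_{L^2_x}$ and $b=\|\triangle u\|_{L^2_x}$. By Parseval, $a^2=4\pi^2\sum|k|^2|\hat u(k)|^2$ and $b^2=16\pi^4\sum|k|^4|\hat u(k)|^2$. Since every nonzero frequency has $|k|\ge1$, we get $b\ge 2\pi a$. Hence the ratio $b/a$ is bounded below by $2\pi$, so the logarithm in \eqref{eq:logsob} is positive, and the absolute value only serves to make the statement look symmetric. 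If $a=0$ then $u=0$ and there is nothing to prove, so we assume $a>0$.

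Next we split the frequency sum at a threshold $N\ge1$, to be chosen later.

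For low frequencies $1\le|k|\le N$, Cauchy--Schwarz gives
\[
\sum_{|k|\le N}|\hat u(k)|\le\Big(\sum_{|k|\le N}|k|^{-2}\Big)^{1/2}\Big(\sum|k|^2|\hat u(k)|^2\Big)^{1/2}.
\]
The lattice sum $\sum_{1\le|k|\le N}|k|^{-2}$ is bounded by $C_0(1+\log N)$, by comparing it with the integral of $|\xi|^{-2}$ over the annulus $1\le|\xi|\le N$ in $\R^2$. So the low part is at most $C_0 a(1+\log N)^{1/2}$.

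For high frequencies $|k|>N$, Cauchy--Schwarz with the weight $|k|^{-4}$ gives
\[
\sum_{|k|>N}|\hat u(k)|\le\Big(\sum_{|k|>N}|k|^{-4}\Big)^{1/2}\Big(\sum|k|^4|\hat u(k)|^2\Big)^{1/2}\le C_0 N^{-1}b,
\]
because $\sum_{|k|>N}|k|^{-4}\lesssim N^{-2}$ in two dimensions.

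It remains to choose $N$. We take $N=b/a$, which is admissible since $b/a\ge 2\pi>1$; if an integer threshold is preferred, $N=\lceil b/a\rceil$ works equally well. With this choice the high part is at most $C_0 a$. Adding the two pieces gives
\[
\|u\|_{L^\infty_x}\le C_0a\big(1+\log(b/a)\big)^{1/2}+C_0a,
\]
and the elementary inequality $(1+s)^{1/2}\le1+s^{1/2}$ converts this into \eqref{eq:logsob}.

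There is no real obstacle in this argument. The only points needing care are the uniform lattice-sum estimates, which follow by comparison with integrals since $|k|\ge1$, and the observation that zero mean is exactly what removes the $k=0$ mode and guarantees $b/a\ge 2\pi$. This is where the torus version improves on the $\R^2$ version of \cite{BG}: no separate $L^2$ term is needed.
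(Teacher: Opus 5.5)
Your proof is correct and is essentially the paper's argument: you use the $\ell^1$ bound on the Fourier coefficients, split the frequencies at a radius, apply weighted Cauchy--Schwarz with weights $|k|^{-2}$ and $|k|^{-4}$, and take a cutoff comparable to $\|\triangle u\|_{L^2_x}/\|\nabla u\|_{L^2_x}$. The paper finds its cutoff $R_0=\sqrt2\,\|\triangle u\|_{L^2_x}/\|\nabla u\|_{L^2_x}$ by minimizing $\|\nabla u\|_{L^2_x}^2\log R+\|\triangle u\|_{L^2_x}^2R^{-2}$, whereas you simply set $N=b/a$; your bookkeeping ($1+\log N$ for the low-frequency lattice sum, together with $b\ge 2\pi a$) is slightly tidier and shows directly that the paper's separate case $R_0\le 1$ never occurs.
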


\begin{proof}
The case $\nabla u=0$ is trivial, since, combining with $\int_{\T^2}u dx=0$, it implies $u=0$, and \eqref{eq:logsob} trivially holds. In the following proof, we assume $\nabla u$ is not identically $0$.

Let $\{\hat u_{j,k}\}_{(j,k)\in\Z^2}$ be the Fourier coefficients of $u$ on $\T^2$. Then, by the assumption 
\[
\hat u_{0,0}=\int_{\T^2}u dx=0.
\]
By the Plancherel theorem, it follows that 
\begin{equation}\label{eq:plan1}
\|u\|_{L^\infty(\T^2)}\leq |\{\hat u_{j,k}\}|_{l^1(\Z^2)},\quad \|\nabla u\|_{L^2(\T^2)}=\left|\{\sqrt{j^2+k^2}\,\hat u_{j,k}\}\right|_{l^2(\Z^2)}
\end{equation}
and
\begin{equation}\label{eq:plan2}
\|\triangle u\|_{L^2(\T^2)}=\left|\{(j^2+k^2)\hat u_{j,k}\}\right|_{l^2(\Z^2)},
\end{equation}
where $|\cdot|_{l^p(\Z^2)}$ is the $l^p(\Z^2)$ norm of a sequence given by
\[
|\{w_{j,k}\}|_{l^p(\Z^2)}=\left(\sum_{(j,k)\in\Z^2}|w_{j,k}|^p\right)^\frac1p.
\]
For arbitrary $R>0$, by the Cauchy-Schwarz inequality and \eqref{eq:plan1}, \eqref{eq:plan2}, we have
\begin{align*}
|\{\hat u_{j,k}\}|_{l^1(\Z^2)}=&\sum_{|(j,k)|<R,\ (j,k)\ne(0,0)}|\hat u_{j,k}|+\sum_{|(j,k)|\ge R}|\hat u_{j,k}|\\
=&\sum_{|(j,k)|<R,\ (j,k)\ne(0,0)}\frac{1}{\sqrt{j^2+k^2}}|\sqrt{j^2+k^2}\,\hat u_{j,k}|\\
&+\sum_{|(j,k)|\ge R}\frac{1}{j^2+k^2}|(j^2+k^2)\hat u_{j,k}|\\
\leq & C_0|\sqrt{j^2+k^2}\,\hat u_{j,k}|_{l^2(\Z^2)}\left(\sum_{|(j,k)|<R,\ (j,k)\ne(0,0)}\frac{1}{j^2+k^2}\right)^\frac12\\
&+C_0|(j^2+k^2)\hat u_{j,k}|_{l^2(\Z^2)}\left(\sum_{|(j,k)|\geq R}\frac{1}{(j^2+k^2)^2}\right)^\frac12\\
\leq & C_0\|\nabla u\|_{L^2(\T^2)} |\log R|^\frac12+ C_0\|\triangle u\|_{L^2(\T^2)} R^{-1}.
\end{align*}
Now consider the function
\[
g(R)=a\log R+b R^{-2},\quad a,b>0,\;R\geq 1,
\]
then by letting $a=\|\nabla u\|_{L^2(\T^2)}^2$ and $b=\|\triangle u\|_{L^2(\T^2)}^2$, the previous inequalities imply
\[
\|u\|_{L^\infty(\T^2)}\leq C_0\,g(R)^\frac12.
\]
It is straightforward to check from
\[
g'(R)=R^{-3}\left(aR^2-2b\right)
\]
that $g(R)$ has a unique minimum at $R_0$, where
\[
R_0=\sqrt{\frac{2b}{a}}=\frac{\sqrt{2}\|\triangle u\|_{L^2(\T^2)}}{\|\nabla u\|_{L^2(\T^2)}}.
\]
If $\sqrt2\,\|\triangle u\|_{L^2(\T^2)}\leq \|\nabla u\|_{L^2(\T^2)}$, namely $R_0\leq 1$, we choose $R=1$, which implies
\[
\|u\|_{L^\infty(\T^2)}\leq C_0\|\nabla u\|_{L^2(\T^2)}.
\]
If $\sqrt{2}\,\|\triangle u\|_{L^2(\T^2)}> \|\nabla u\|_{L^2(\T^2)}$, then we can choose $R=R_0>1$, which yields \eqref{eq:logsob}.
\end{proof}

As the first step to establish the well-posedness result,we consider the Cauchy problem associated with the NLS-type equation \eqref{eq:NLS},
\begin{equation}\label{eq:cauchy_NLS}
\begin{cases}
i\d_t\psi+\frac12\triangle\psi=f'(|\psi|^2)\psi+\frac{1}{\tau}S\psi+V\psi,\\
-\triangle V=|\psi|^2-\mathcal{C}(x),\quad \psi(0)=\psi_0,
\end{cases}
\end{equation}
for initial data $\psi_0\in H^2_x(\T^2)$, where $S$ denotes the phase of $\psi$, equivalently given by Definition \ref{def:phase} for the associated hydrodynamic variables. We solve for $\psi$ such that $\inf_{t,x}|\psi|\geq\delta^\frac12>0$. By Proposition \ref{prop:lift2} of the wave function lifting, $\psi$ is associated with the hydrodynamic variables
\[
\rho=|\psi|^2,\quad v=\IM\left(\frac{\nabla\psi}{\psi}\right)
\]
and the well-posedness result for \eqref{eq:cauchy_NLS}, together with the lifting and polar factorization arguments, yields the corresponding well-posedness result for the QHD system \eqref{eq:QHD_md}.

Using the logarithmic embedding inequality \eqref{eq:logsob}, we have the following estimates on the $L^\infty_x(\T^2)$ norm of $\psi$ and $\rho-M_0$, which play an essential role in this section.

\begin{lem}\label{lem:Linfty_2d}
Let $\psi\in H^2_x(\T^2)$, $\rho=|\psi|^2$ and $M_0=\int_{\T^2}\rho dx$. Then the following estimates hold:
\begin{equation}\label{eq:Linfty1}
\|\psi\|_{L^\infty_x}\leq M_0^\frac12+g_1(\|\nabla \psi\|_{L^2_x},\|\nabla\psi\|_{H^1_x})
\end{equation}
and
\begin{equation}\label{eq:Linfty2}
\|\rho-M_0\|_{L^\infty_x}\leq g_2(\|\nabla \psi\|_{L^2_x},\|\nabla\psi\|_{H^1_x}),
\end{equation}
where, for $u,w>0$,
\[
g_1(u,w)=C_0u(1+|\log (w/u)|^\frac12)
\]
and
\[
g_2(u,w)=C_0 g_1(u,w)\cdot u[1+|\log (M_0^\frac12+g_1(u,w))|+|\log (w/u)|^\frac12].
\]
\end{lem}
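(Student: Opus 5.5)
The plan is to apply Lemma \ref{lem:logsob} twice: first to the mean-zero part of $\psi$, and then to the mean-zero function $\rho-M_0$.

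\textbf{First estimate \eqref{eq:Linfty1}.} Set $\bar\psi=\int_{\T^2}\psi\,dx$ and $u=\psi-\bar\psi$. Then $\int u=0$, $\nabla u=\nabla\psi$ and $\triangle u=\triangle\psi$. Since $|\T^2|=1$, Cauchy--Schwarz gives $|\bar\psi|\le\|\psi\|_{L^2_x}=M_0^{1/2}$. Lemma \ref{lem:logsob} then yields
\[
\|\psi\|_{L^\infty_x}\le M_0^{\frac12}+C_0\|\nabla\psi\|_{L^2_x}\Bigl[1+\bigl|\log(\|\triangle\psi\|_{L^2_x}/\|\nabla\psi\|_{L^2_x})\bigr|^{\frac12}\Bigr].
\]
To replace $\|\triangle\psi\|_{L^2_x}$ by $w=\|\nabla\psi\|_{H^1_x}$, note that $\|\nabla\psi\|_{L^2_x}\le w$ and $\|\triangle\psi\|_{L^2_x}\le C w$. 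The logarithm is the only delicate point, because $|\log|$ is not monotone. In the regime $\|\triangle\psi\|\ge\|\nabla\psi\|$ we have $0\le\log(\|\triangle\psi\|/\|\nabla\psi\|)\le\log(Cw/u)\le \log C+|\log(w/u)|$. In the complementary regime $R_0\le \sqrt2$, the proof of Lemma \ref{lem:logsob} already gives the bound $C_0u$ without any logarithm. In both cases the constants are absorbed into $C_0$, which gives $g_1$.

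\textbf{Second estimate \eqref{eq:Linfty2}.} Let $u=\rho-M_0$, which has zero mean. Compute $\nabla\rho=2\RE(\bar\psi\nabla\psi)$ and $\triangle\rho=2\RE(\bar\psi\triangle\psi)+2|\nabla\psi|^2$. This gives
\[
\|\nabla\rho\|_{L^2_x}\le 2\|\psi\|_{L^\infty_x}u,\qquad \|\triangle\rho\|_{L^2_x}\le 2\|\psi\|_{L^\infty_x}\|\triangle\psi\|_{L^2_x}+2\|\nabla\psi\|_{L^4_x}^2.
\]
The last term is handled by Ladyzhenskaya on $\T^2$: $\|\nabla\psi\|_{L^4}^2\le C\|\nabla\psi\|_{L^2}\|\nabla\psi\|_{H^1}\le Cw^2$.

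Next, apply Lemma \ref{lem:logsob} to $u$. The prefactor $\|\nabla\rho\|_{L^2_x}$ is bounded by $2(M_0^{1/2}+g_1)u$, and this is where the factor $g_1\cdot u$ in $g_2$ comes from. The $M_0^{1/2}u$ part also has to be absorbed; I would do this by comparing with $g_1$, or note that the stated form of $g_2$ may implicitly allow such constants, and check this against the intended dependence.

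\textbf{Main obstacle: the logarithm in the second estimate.} As above, the regime where $\|\triangle\rho\|\le\sqrt2\|\nabla\rho\|$ gives no logarithm. Otherwise the logarithm of the ratio is positive, and I will bound it from above. The difficulty is that the ratio $\|\triangle\rho\|/\|\nabla\rho\|$ needs a lower bound on $\|\nabla\rho\|$, which is not available. To get around this, I would use the monotonicity of $s\mapsto s(1+|\log(A/s)|^{1/2})$ for $s\le A/e$. This allows the true $\|\nabla\rho\|$ to be replaced by its upper bound $2\|\psi\|_\infty u$. With that replacement the ratio is at most
\[
\frac{C(\|\psi\|_\infty w+w^2)}{\|\psi\|_\infty u}\le C\frac{w}{u}\Bigl(1+\frac{w}{\|\psi\|_\infty}\Bigr).
\]
Taking logarithms, and using $\sqrt{a+b}\le\sqrt a+\sqrt b$ together with $\sqrt{x}\le 1+x$, produces the terms $|\log(w/u)|^{1/2}$ and $|\log(M_0^{1/2}+g_1)|$ that appear in $g_2$. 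The remaining $\log w$ pieces can be absorbed into $|\log(w/u)|$ plus the $\log$ of the $L^\infty$ bound. I expect this bookkeeping of logarithms to be the only real work.
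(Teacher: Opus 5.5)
Your route is the paper's: Lemma \ref{lem:logsob} applied to $\psi-\int_{\T^2}\psi\,dx$ and to $\rho-M_0$, product-rule bounds for $\nabla\rho$ and $\triangle\rho$, Ladyzhenskaya for $\|\nabla\psi\|_{L^4_x}^2$, and then \eqref{eq:Linfty1} inserted into the prefactor. You are also more careful than the paper about the non-monotone logarithm.

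The genuine gap is the step you left open: absorbing the $M_0^{1/2}u$ part of $\|\nabla\rho\|_{L^2_x}\le 2(M_0^{1/2}+g_1)u$ into $g_1\cdot u$. This cannot be done, because \eqref{eq:Linfty2} with $g_2$ exactly as printed is false. Take $\psi_\eps(x)=1+\eps\cos(2\pi x_1)$. Then $M_0=1+\eps^2/2$ and $\rho-M_0=2\eps\cos(2\pi x_1)+\frac{\eps^2}{2}\cos(4\pi x_1)$, so $\|\rho-M_0\|_{L^\infty_x}\ge 2\eps$. On the other hand $u=\sqrt2\,\pi\eps$ and $w=c\,\eps$ with $c$ independent of $\eps$. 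Hence $w/u$ is fixed, $g_1(u,w)=O(\eps)$, the bracket in $g_2$ stays bounded, and $g_2(u,w)=O(\eps^2)$ for any value of $C_0$. Near a nonvacuum constant state, $\rho-M_0$ is linear in the perturbation, of size $M_0^{1/2}u$, not $u^2$.

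The paper's one-line substitution has the same slip. Inserting \eqref{eq:Linfty1} into $\|\nabla\rho\|_{L^2_x}\le 2\|\psi\|_{L^\infty_x}u$ only yields the prefactor $(M_0^{1/2}+g_1(u,w))\,u$. That corrected $g_2$, with $M_0^{1/2}+g_1$ in place of the first factor $g_1$, is what you should state and prove.

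For the corrected statement you can also skip the monotonicity bookkeeping you flagged as the main obstacle. Stop the proof of Lemma \ref{lem:logsob} before optimizing in $R$: for mean-zero $f\in H^2(\T^2)$ and every $R\ge 1$,
\[
\|f\|_{L^\infty_x}\le C_0\|\nabla f\|_{L^2_x}(1+\log R)^{\frac12}+C_0\|\triangle f\|_{L^2_x}R^{-1},
\]
and take $R=w/u\ge 1$.
\begin{itemize}
\item For $f=\psi-\int_{\T^2}\psi\,dx$ this gives \eqref{eq:Linfty1} at once.
\item For $f=\rho-M_0$, use $\|\nabla\rho\|_{L^2_x}\le 2\|\psi\|_{L^\infty_x}u$ and $\|\triangle\rho\|_{L^2_x}\le C_0(\|\psi\|_{L^\infty_x}+u)\,w$, the latter from $\|\nabla\psi\|_{L^4_x}^2\le C_0uw$. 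This gives $\|\rho-M_0\|_{L^\infty_x}\le C_0(\|\psi\|_{L^\infty_x}+u)\,u\,(1+|\log(w/u)|^{\frac12})$.
\item Then \eqref{eq:Linfty1} together with $u\le C_0^{-1}g_1(u,w)$ finishes the proof.
\end{itemize}
No lower bound on $\|\nabla\rho\|_{L^2_x}$ is ever needed.
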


\begin{proof}
Inequality \eqref{eq:Linfty1} is a direct consequence of \eqref{eq:logsob},
\begin{align*}
\|\psi\|_{L^\infty_x}\leq & \int_{\T^2}|\psi|dx+C\|\nabla \psi\|_{L^2_x} \left[1+\left|\log \frac{\|\triangle \psi\|_{L^2_x}}{\|\nabla \psi\|_{L^2_x}}\right|^\frac12\right]\\
\leq & \left(\int_{\T^2}|\psi|^2dx\right)^\frac12+C\|\nabla \psi\|_{L^2_x} \left[1+\left|\log \frac{\|\triangle \psi\|_{L^2_x}}{\|\nabla \psi\|_{L^2_x}}\right|^\frac12\right]\\
= & M_0^\frac12+g_1(\|\nabla \psi\|_{L^2_x},\|\nabla\psi\|_{H^1_x}).
\end{align*}
Also by \eqref{eq:logsob}, we have
\begin{equation}\label{eq:temp1}
\|\rho-M_0\|_{L^\infty_x}\leq C_0\|\nabla \rho\|_{L^2_x}(1+|\log (\|\triangle \rho\|_{L^2_x}/\|\nabla \rho\|_{L^2_x})|^\frac12).
\end{equation}
We further have
\[
\|\nabla \rho\|_{L^2_x}\leq 2\|\psi\|_{L^\infty_x}\|\nabla\psi\|_{L^2_x},
\]
and 
\begin{align*}
\|\triangle \rho\|_{L^2_x}\leq & \|\psi\|_{L^\infty_x}\|\triangle \psi\|_{L^2_x}+\|\nabla \psi\|_{L^4_x}^2\\
\leq & \|\psi\|_{L^\infty_x}\|\triangle \psi\|_{L^2_x}+C_0\|\nabla \psi\|_{L^2_x}\|\triangle \psi\|_{L^2_x}.
\end{align*}
Substituting the estimates of $\|\nabla \rho\|_{L^2_x}$, $\|\triangle \rho\|_{L^2_x}$ into \eqref{eq:temp1} and using \eqref{eq:Linfty1}, we obtain \eqref{eq:Linfty2}.
\end{proof}

Let $(\rho_0,J_0)$ be the initial data in Theorem \ref{thm:glob2}, and set $v_0=J_0/\rho_0$. We define $\psi_0\in H^2_x(\T^2)$ to be the lifted wave function associated with $(\rho_0,v_0)$, as provided by Proposition \ref{prop:lift2}. Then $\psi_0$ satisfies $\inf_x|\psi_0|\geq\delta^\frac12>0$. Now we can prove the local existence of $H^2$ solutions to \eqref{eq:cauchy_NLS} with initial data $\psi_0$ by a standard Picard iteration scheme.

\begin{prop}
There exists $C_*(M_0,\delta)>0$, with $M_0=\|\psi_0\|_{L^2_x}^2$, such that if
\begin{equation}\label{eq:ini_1}
\|\nabla\psi_0\|_{H^1_x} \leq C_*(M_0,\delta),
\end{equation}
then the Cauchy problem \eqref{eq:cauchy_NLS} has a unique local-in-time solution $\psi\in \mathcal{C}([0,T_*);H^2_x(\T^2))$, for some $T_*>0$ depending on $\tau$, $\delta$ and $\|\psi_0\|_{H^2_x}$, with
\[
\inf_{(t,x)\in[0,T_*)\times\T^2}|\psi(t,x)|\geq \delta^{1/2}.
\]
\end{prop}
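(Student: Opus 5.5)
We intend to prove local well-posedness by a contraction argument in the closed set
\[
X_T=\Big\{\psi\in \mathcal{C}([0,T];H^2_x(\T^2)):\ \|\psi-\psi_0\|_{L^\infty_tH^2_x}\le R,\ \inf_{t,x}|\psi|\ge \delta^{1/2}\Big\},
\]
using the Duhamel formulation
\[
\Phi(\psi)(t)=e^{\frac{it}{2}\triangle}\psi_0-i\int_0^te^{\frac{i(t-s)}{2}\triangle}\big[f'(|\psi|^2)\psi+\tau^{-1}S[\psi]\psi+V[\psi]\psi\big](s)\,ds .
\]
The nonlinearity must first be made well defined on $X_T$. For $|\psi|\ge\delta^{1/2}$ we set $v=\IM(\nabla\psi/\psi)$. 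Its $H^1_x$ norm is bounded by $C(\delta)(1+\|\psi\|_{H^2_x})^2$, because $H^2(\T^2)\subset L^\infty$ and $1/\psi$ is smooth on the range of $\psi$. The phase $S[\psi]$ is then given by formula \eqref{eq:defS}. Rather than working with the averaged chemical potential directly, we use that on the torus a continuous phase exists once $|\psi|\ge\delta^{1/2}$ and the zero-average conditions \eqref{eq:avrg0} hold (Remark \ref{rem:equivS}). Equivalently, $S$ is determined by $\nabla S=v$ together with the ODE $\frac{d}{dt}\int S\,dx=-\int\mu\,dx-\tau^{-1}\int S\,dx$, where $\int\mu\,dx$ is written without second derivatives through Lemma \ref{lem:phase2}. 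This gives $S\in L^\infty_tH^2_x$ with Lipschitz dependence on $\psi$:
\[
\|S[\psi_1]-S[\psi_2]\|_{L^\infty_TH^2_x}\le C(\delta,R,\tau)\big(T+1\big)\|\psi_1-\psi_2\|_{L^\infty_TH^2_x},
\]
where the spatial part comes from $v$ and the time-integrated mean contributes a factor $T$. The mean part is $H^1$-Lipschitz, so it is harmless. For the gradient part, $\|\nabla(S_1-S_2)\|_{H^1}=\|v_1-v_2\|_{H^1}$ is controlled by $\|\psi_1-\psi_2\|_{H^2}$ but with no factor $T$. This is acceptable because the Duhamel integral supplies the factor $T$.

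Next we estimate the remaining terms. By elliptic regularity, $V[\psi]$ lies in $H^4_x\subset W^{2,\infty}$ with norm at most $C\|\,|\psi|^2-M_0\|_{L^2}$. Since $f'$ is a polynomial, the algebra property of $H^2(\T^2)$ gives
\[
\|N(\psi_1)-N(\psi_2)\|_{H^2}\le C(\delta,R,\|\psi_0\|_{H^2},\tau)\|\psi_1-\psi_2\|_{H^2},
\]
where $N$ denotes the full nonlinearity. The group $e^{\frac{it}{2}\triangle}$ is unitary on $H^2$. Hence, for $T$ small, $\Phi$ maps the $H^2$-ball into itself and is a contraction there.

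We expect the main obstacle to be the lower bound $|\Phi(\psi)|\ge\delta^{1/2}$, which must be preserved with the same constant $\delta$, not a smaller one. A bare continuity estimate only gives
\[
\|\Phi(\psi)(t)-\psi_0\|_{L^\infty}\le C\big(\|e^{\frac{it}{2}\triangle}\psi_0-\psi_0\|_{H^2}+T\big),
\]
which tends to $0$ as $T\to0$; this keeps $|\psi|\ge\delta^{1/2}-\eta$ but loses a little. This is where the smallness hypothesis \eqref{eq:ini_1} should enter. By Lemma \ref{lem:Linfty_2d}, the $L^\infty$ oscillation $\|\,|\psi_0|^2-M_0\|_{L^\infty}$ is controlled by $g_2$ evaluated at $\|\nabla\psi_0\|_{L^2}$ and $\|\nabla\psi_0\|_{H^1}$. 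Choosing $C_*(M_0,\delta)$ so that $g_2\le\frac12(M_0-\delta)$, and keeping the $\nabla\psi$ norms below roughly $2C_*$ on $[0,T]$, gives
\[
|\psi(t)|^2\ge M_0-\tfrac12(M_0-\delta)>\delta
\]
throughout $[0,T_*)$. The positivity constraint is therefore automatically strict, and it can be dropped from the fixed-point set by working with a slightly enlarged threshold.

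Uniqueness and continuity in $H^2$ follow from the same contraction estimate together with Duhamel continuity. The resulting existence time $T_*$ depends on $\tau,\delta$ and $\|\psi_0\|_{H^2}$ through the Lipschitz constants above.
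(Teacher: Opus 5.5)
Your route differs from the paper's. The paper runs a Picard iteration on \emph{linear} problems. There $\psi_m$ solves a Schr\"odinger equation whose real potential $f'(|\psi_{m-1}|^2)+V_{m-1}+\tau^{-1}S_{m-1}$ multiplies $\psi_m$ itself. The phase $S_m$ comes from the auxiliary gradient system $\nabla S_m=v_m$, $\partial_tS_m=-\mu_m-\tau^{-1}S_{m-1}$, with a ``linearized'' chemical potential $\mu_m$, and the zero-average condition is propagated by the dynamics of $v_m$.

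You instead contract the fully nonlinear Duhamel map, with $S[\psi]$ an explicit functional of $\psi$. Its zero-mean part is $\triangle^{-1}\diver v$, and its mean comes from the damped ODE driven by $\int\mu\,dx$, which by Lemma \ref{lem:phase2} needs only first derivatives. This is cleaner. The Lipschitz bound for $S$ is transparent. At the fixed point one checks $\nabla(\theta-S)=\partial_t(\theta-S)=0$ for the continuous phase $\theta$ of $\psi$, so $S$ really is the phase.

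What the paper's linearization buys is exact mass conservation at every step, since each $\psi_m$ evolves under a real potential acting on itself. Your map $\Phi$ does not conserve $\|\Phi(\psi)\|_{L^2_x}$ unless $\psi$ is a fixed point, because the nonlinearity is evaluated at $\psi$ rather than at $\Phi(\psi)$.

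This has consequences you need to repair; all of them are easily fixed.

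First, the Poisson equation. For $\psi$ in your set with $\int|\psi|^2dx\neq M_0$, the equation $-\triangle V=|\psi|^2-M_0$ has no periodic solution, so the nonlinearity is undefined as written. Subtract $\int|\psi|^2dx$ instead; this agrees with the original problem at the fixed point.

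Second, positivity. Lemma \ref{lem:Linfty_2d} controls $|\psi|^2$ around its own mean, so applying it to $\Phi(\psi)(t)$ with mean $M_0$ is not justified. Use it only at $t=0$, where it gives the margin $|\psi_0|^2\ge\frac12(M_0+\delta)$. Then $L^\infty$ continuity keeps $|\Phi(\psi)|^2\ge\delta$, with the exact constant, for short time. A lower bound must stay in the set anyway, since $v[\psi]$ and $\int\mu\,dx$ involve $1/\psi$.

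Third, the dependence of $T$. Centering the ball at $\psi_0$ in $L^\infty_tH^2_x$ makes $T$ depend on the $H^2$-modulus of continuity of $t\mapsto e^{\frac{it}{2}\triangle}\psi_0$, not only on $\|\psi_0\|_{H^2_x}$ as the statement requires. Instead impose $\|\psi\|_{L^\infty_tH^2_x}\le2\|\psi_0\|_{H^2_x}$, $\psi(0)=\psi_0$, and an $L^\infty_{t,x}$-closeness constraint. The last one is controlled by norms alone, since interpolation gives $\|e^{\frac{it}{2}\triangle}\psi_0-\psi_0\|_{L^\infty_x}\lesssim t^{1/4}\|\psi_0\|_{H^2_x}$.

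Fourth, the zero-average condition. You invoke \eqref{eq:avrg0} for $v[\psi]$ without justification. It follows because continuity in time from $\psi_0$ keeps the winding numbers of $\psi$ equal to zero.

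Finally, a minor slip: elliptic regularity bounds $\|V\|_{H^4_x}$ by $\||\psi|^2-M_0\|_{H^2_x}$, not by its $L^2_x$ norm, though $V\in H^2_x$ is all you use.
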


\begin{proof}
For nonlinear Schr\"odinger equations without the Langevin term $\tau^{-1}S\psi$, local well-posedness is well established in the literature, see for example \cite{KK} and references therein. Therefore, in this proof we only present the technical details concerning the Langevin term $\tau^{-1}S\psi$.

The proof follows a standard Picard iteration scheme. We construct an approximating sequence $\{\psi_m\}$ by solving the following linear Schr\"odinger equation for $\psi_m$:
\begin{equation}\label{eq:QLS_m}
i\d_t\psi_m+\frac12\triangle\psi_m=f'(|\psi_{m-1}|^2)\psi_m+\frac{1}{\tau}S_{m-1}\psi_m+V_{m-1}\psi_m,
\end{equation}
with $\psi_{m-1}$, $S_{m-1}$ and $V_{m-1}$ given in the previous step with $\psi_{m-1}\in \mathcal{C}([0,T_*);H^2_x(\T^2))$ and $\inf_{t,x}|\psi_{m-1}|\geq \delta^\frac12$. The Duhamel formula for \eqref{eq:QLS_m} is written as
\begin{equation}\label{eq:duhamel}
\begin{aligned}
\psi_m=e^{\frac{i}{2}t\triangle}\psi_0&-i\int_0^te^{\frac{i}{2}(t-s)\triangle}[f'(|\psi_{m-1}|^2)+V_{m-1}]\psi_m(s)ds\\
&-\frac{i}{\tau}\int_0^te^{\frac{i}{2}(t-s)\triangle}S_{m-1}\psi_m(s)ds,
\end{aligned}
\end{equation}
and the electric potential $V_{m-1}$ is provided by the Poisson equation, 
\[
-\triangle V_{m-1}=|\psi_{m-1}|^2-M_0
\]
with $\int_{\T^2} V_{m-1}dx=0$. For the phase function, we first set $S_0=0$. For $m\geq 1$, we want to give a suitable definition of $S_m$ by adapting the idea of Definition \ref{def:phase} for $\psi_m$ as follows. As before, we define the velocity and the linearized chemical potential as
\begin{equation}\label{eq:mu_m}
v_m=\IM\left(\frac{\nabla\psi_m}{\psi_m}\right),\quad \mu_m=-\frac12\RE\left(\frac{\triangle\psi_m}{\psi_m}\right)+f'(|\psi_{m-1}|^2)+V_{m-1},
\end{equation}
and we will see below that this definition is consistent by the positivity of $|\psi_m|$. By \eqref{eq:QLS_m} the equation of $v_m$ is given by
\begin{equation}\label{eq:dtv_m}
\operatorname{curl}v_m=0,\quad \d_t v_m=-\nabla\left(\mu_m+\frac{1}{\tau}S_{m-1}\right),\quad v_m(0)=v_0.
\end{equation}
Thus following the idea of \eqref{eq:gradS}, we define $S_m$ to be the solution to the space-time gradient equation 
\begin{equation}\label{eq:gradS_m}
\begin{cases}
\nabla S_m =v_m,\quad\d_t S_m=-\mu_m-\frac{1}{\tau}S_{m-1}, \\
S_m(0,0)=S_*\in [0,2\pi).
\end{cases}
\end{equation}
Together with the zero-average condition in \eqref{eq:cond_1} (which is also satisfied by $v_m$ following the dynamics), the irrotationality condition \eqref{eq:dtv_m} guarantees the solvability of \eqref{eq:gradS_m} with periodic $S_m$. The phase function $S_m$ can be explicitly written as
\begin{equation}\label{eq:defS_m}
\begin{aligned}
S_m(t,x)=&S_*+\int_{\T^2}\int_{l(0,x_*)}v_0(y)\cdot d\vec{l}(y)dx_*\\
&-\int_0^t \int_{\T^2}(\mu_m+\tau^{-1}S_{m-1})(s,x_*)dx_*ds+\int_{\T^2}\int_{l(x_*,x)}v_m(t,y)\cdot d\vec{l}(y)dx_*.
\end{aligned}
\end{equation}

We claim that the sequence $\{\psi_m\}$ satisfies the following properties on $[0,T_*)\times \T^2$:
\begin{itemize}
\item[(1)] boundedness: $\|\psi_m\|_{L^\infty_tH^2_x}\leq 2\|\psi_0\|_{H^2_x}$ and $\|S_{m}\|_{L^\infty_tH^2_x}\leq C(\delta,\|\psi_0\|_{H^2_x})$;
\item[(2)] positivity: $\inf_{t,x}|\psi_m|\geq \delta^\frac12$.
\end{itemize}
By our assumption, $\psi_0$ and $S_0$ obviously satisfy the properties (1) and (2). In this paper we will only focus on the estimates related to the phase function $S_m$ and the positivity property (2), and the remaining analysis follows the standard argument for nonlinear Schr\"odinger equations.

First, we determine the bound of the initial data $\|\nabla\psi_0\|_{H^1_x}\leq C_*(M_0,\delta)$, such that property (1) implies property (2). Let us define the density $\rho_m=|\psi_m|^2$ and the velocity
\[
v_m=\IM\left(\frac{\nabla\psi_m}{\psi_m}\right).
\]
Then, from \eqref{eq:QLS_m}, we obtain 
\[
\d_t\rho_m+\diver (\rho_m v_m)=0.
\]
As a consequence, the total mass (and also the average density, since we assume $|\T^2|=1$) $\int_{\T^2}\rho_m dx\equiv M_0$ is conserved. Then, by \eqref{eq:Linfty2}, we have
\[
|\rho_m-M_0|\leq g_2(\|\nabla\psi_m\|_{L^2_x},\|\triangle\psi_m\|_{L^2_x}),
\]
and we can choose $C_*(M_0,\delta)$ small such that if $\|\nabla\psi_m\|_{H^1_x(\T^2)}\leq C_*(M_0,\delta)$ holds, we have
\[
|\rho_m-M_0|\leq g_2(\|\nabla\psi_m\|_{L^2_x},\|\triangle\psi_m\|_{L^2_x}) \leq M_0-\delta,
\]
which also implies
\[
|\psi_m|\geq \delta^\frac12.
\]

For the boundedness of $\|\nabla\psi_m\|_{L^\infty_tH^1_x}$, we can directly consider the $H^2_x$ norm of $\psi_m$, as we have shown $\|\psi_m\|_{L^2_x}^2\equiv M_0$. The last integral in \eqref{eq:duhamel} can be estimated as
\begin{align*}
\|\int_0^te^{\frac{i}{2}(t-s)\triangle}S_{m-1}\psi_m(s)ds\|_{L^\infty_tH^2_x}\leq&  T_* \|S_{m-1}\psi_m \|_{L^\infty_tH^2_x}\\
\leq & C_0T_* \|S_{m-1}\|_{L^\infty_tH^2_x}\|\psi_m \|_{L^\infty_tH^2_x}.
\end{align*}
Thus by the assumption of induction, we have
\[
\frac{1}{\tau}\|\int_0^te^{\frac{i}{2}(t-s)\triangle}S_{m-1}\psi_m(s)ds\|_{L^\infty_tH^2_x}\leq C(\delta,\|\psi_0\|_{H^2_x})\frac{T_*}{\tau} \|\psi_m \|_{L^\infty_tH^2_x},
\]
and we choose $T_*>0$ small such that $C(\delta,\|\psi_0\|_{H^2_x})\frac{T_*}{\tau}\leq \frac14$. The other terms in \eqref{eq:duhamel} are treated similarly, thus we obtain
\[
\|\psi_m\|_{L_t^\infty H^2_x}\leq \|\psi_0\|_{H^2_x}+\frac12\|\psi_m\|_{L_t^\infty H^2_x},
\]
namely the first part of property (1). 

Last, we prove the second part of property (1), namely $\|S_m\|_{L^\infty_tH^2_x}\leq C(\delta,\|\psi_0\|_{H^2_x})$. By $\nabla S_m=v_m$, we only need to control $\|S_m\|_{L^\infty_tL^2_x}$ and $\|\nabla v_m\|_{L^\infty_tL^2_x}$.
From \eqref{eq:defS_m} we obtain
\begin{align*}
|S_m(t,x)|\leq& S_*+ \int_{\T^2}|v_0|dx_*+\int_0^t\left|\int_{\T^2}\mu_mdx_*\right|ds\\
&+\int_0^t\int_{\T^2}\tau^{-1}|S_{m-1}|dx_*ds+\int_{\T^2}|v_m|dx_*.
\end{align*}
Recalling that $v_m=\IM(\frac{\nabla\psi_m}{\psi_m})$, by the bound of $\psi_m$ and (2), we choose $C(\delta,\|\psi_0\|_{H^2_x})$ such that
\[
\int_{\T^2}|v_m|dx_*\leq 2\delta^{-\frac12}\int_{\T^2} |\nabla\psi_m|dx_*\leq \frac14C(\delta,\|\psi_0\|_{H^2_x}).
\]
The same bound holds for $\int_{\T^2}|v_0|dx_*$. By the assumption of induction we have $\|S_{m-1}\|_{L^\infty_tL^2_x}\leq C(\delta,\|\psi_0\|_{H^2_x})$, therefore
\[
\int_0^t\int_{\T^2}\tau^{-1}|S_{m-1}|dx_*ds\leq \frac14C(\delta,\|\psi_0\|_{H^2_x})
\]
if we choose $T_*\leq \frac{\tau}{4}$. Last, analogous to \eqref{eq:phase2} we can choose $T_*>0$ small such that
\begin{align*}
\int_0^t\left|\int_{\T^2}\mu_mdx_*\right|ds\leq & 2\delta^{-1}\int_0^t\int_{\T^2}|\nabla\psi_m|^2dx_*ds\\
&+\int_0^t\int_{\T^2}|f'(|\psi_{m-1}|^2)|dx_*ds\leq \frac{1}{4}C(\delta,\|\psi_0\|_{H^2_x}),
\end{align*}
where in the last inequality we use $f\in C^2([0,\infty))$ and 
\[
|f'(|\psi_{m-1}|^2)|\leq C(\|f\|_{C^2},\|\psi_{m-1}\|_{L^\infty_{t,x}})\leq C(\|f\|_{C^2},\|\psi_{m-1}\|_{L^\infty_{t}H^2_x}).
\]
Thus we prove $\|S_m\|_{L^\infty_{t,x}}\leq C(\delta,\|\psi_0\|_{H^2_x})$. On the other hand, again by the definition of $v_m$, we have
\[
\|\nabla v_m\|_{L_t^\infty L^2_x}\leq \delta^{-\frac12}\|\triangle \psi_m\|_{L_t^\infty L^2_x}+\delta^{-1}\|\nabla\psi_m\|_{L_t^\infty L^4_x}^2,
\]
where by the embedding inequality,
\[
\|\nabla\psi_m\|_{L_t^\infty L^4_x}^2\leq \|\psi_m\|_{L_t^\infty H^2_x}^2\leq 4\|\psi_0\|_{H^2_x}^2.
\]
Thus we finish the proof of property (1).

By an analogous argument, applied to the difference of two equations of the form \eqref{eq:duhamel}, we can prove that $\{(\psi_m,S_m)\}$ is a contractive sequence. Since the estimates are standard and almost repetitive, we will not give the details.

\begin{lem}\label{lem:contr}
There exists $0<q<1$ such that the sequence $\{(\psi_m,S_m)\}$ satisfies
\[
\|\psi_m-\psi_{m-1}\|_{L^\infty_tH^2_x}+\|S_{m}-S_{m-1}\|_{L^\infty_tH^2_x}\leq C(\delta,\|\psi_0\|_{H^2_x})q^{m-1}.
\]
\end{lem}

Lemma \ref{lem:contr} implies $\{(\psi_m,S_m)\}$ converges strongly to $(\psi,S)$ in $L^\infty([0,T_*],H^2(\T^2))$. Moreover by the convergence and the uniform lower bound of $|\psi_m|$, we have
\[
v_m=\IM\left(\frac{\nabla\psi_m}{\psi_m}\right)\to v=\IM\left(\frac{\nabla\psi}{\psi}\right),
\]
\[
-\triangle V_m=|\psi_m|^2-\mathcal{C}(x)\to-\triangle V=|\psi|^2-\mathcal{C}(x), 
\]
and
\[
\mu_m=-\frac12\RE\left(\frac{\triangle\psi_m}{\psi_m}\right)+f'(|\psi_{m-1}|^2)+V_{m-1}\to -\frac12\RE\left(\frac{\triangle\psi}{\psi}\right)+f'(|\psi|^2)+V=\mu.
\] 
Also by writing the equation of $S_m$ in \eqref{eq:gradS_m} as
\[
\d_tS_m=-\frac{1}{\tau}S_m-\mu_m-\frac{1}{\tau}(S_{m-1}-S_m),
\]
we have
\begin{align*}
S_m(t,x)=&e^{-\frac{t}{\tau}}\left(S_*+\int_{\T^2}\int_{l(0,x_*)}v_0(y)\cdot d\vec{l}dx_*\right)\\
&-\int_0^t \int_{\T^2} e^{\frac{s-t}{\tau}}[\mu_m+\tau^{-1}(S_{m-1}-S_m)](s,x_*)dx_*ds\\
&+\int_{\T^2}\int_{l(x_*,x)}v_m(t,y)\cdot d\vec{l}(y)dx_*,
\end{align*}
then by passing to the limit we see $S$ satisfies \eqref{eq:defS}. Thus we prove $(\psi,S)$ is a solution to the Cauchy problem \eqref{eq:cauchy_NLS} on $[0,T_*)\times\T^2$, and the continuity in time follows directly from the Duhamel formula.

The proof of uniqueness follows from a standard argument for the Schr\"odinger equation, by considering the difference equation, similarly to the contraction estimate in Lemma \ref{lem:contr}.

\end{proof}

Let $T^*>0$ be the maximal time of existence for which $\psi$ solves the Cauchy problem \eqref{eq:cauchy_NLS} and belongs to the class
\begin{equation}\label{eq:spaceX}
\begin{aligned}
X(T^*)=\{\psi\in L^\infty([0,T^*);H^2_x(\T^2)); \|\nabla &\psi\|_{L^\infty_tH^1_x}\leq C_*(M_0,\delta),\\
&\||\psi|^2-M_0\|_{L^\infty_{t,x}}\leq M_0-\delta\},
\end{aligned}
\end{equation}
and let $(\rho,v)$ be the hydrodynamic variables associated with $\psi$ in the sense of \eqref{eq:assoc}. Here  $M_0=\int_{\T^2}\rho dx$, and $C_*(M_0,\delta)$ is the bound appearing in \eqref{eq:ini_1}. By the choice of $C_*(M_0,\delta)$ in the proof of the previous Proposition of local existence, the bound  
\[
\|\nabla\psi\|_{L^\infty_tH^1_x}\leq C_*(M_0,\delta)
\]
automatically implies 
\[
\left||\psi|^2-M_0\right|\leq M_0-\delta,
\]
but we still emphasize this property in the definition of the space $X(T^*)$, since it plays an essential role in our later arguments. Now we will show that, by choosing the initial data suitably small, we can extend the time of existence $T^*$ to infinity. The main tools are some apriori estimates based on the functionals $E(t)$, $I(t)$ and the entropy $H(\rho)(t)$, whose time derivatives are computed in Section \ref{sect:dt}. Moreover, these estimates should be established uniformly with respect to the relaxation time $\tau>0$, thus they allow us to rigorously justify the relaxation-time limit.

\begin{prop}\label{prop:bdI}
Let $\psi\in L^\infty(0,T^*;H^2_x(\T^2))$ be the solution considered above. Then for almost every $0\leq t<T^*$, the functional $I(t)$ satisfies 
\begin{equation}\label{eq:ineqI}
\begin{aligned}
\frac{d}{dt} & I(t)+\frac{1}{4\tau}\int_{\T^2}\rho\sigma^2dx+\frac{1}{2\tau}\int_{\T^2}\rho |v|^4 dx \\
\leq & C_0g_3(M_0,E_0,\delta)\tau \int_{\T^2}\rho\mu^2dx+C_0\delta^{-1}E_0[M_0+|f'(\delta)|+I(t)]\frac{1}{\tau}\int_{\T^2} \rho |v|^2dx,
\end{aligned}
\end{equation}
where
\begin{equation}\label{eq:g_2d}
g_3(M_0,E_0,\delta)=M_0+M_0(M_0-\delta)^{4n-2}+M_0^\frac43E_0^\frac23.
\end{equation}
\end{prop}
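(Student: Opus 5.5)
Starting from the identity \eqref{eq:rl_I} of Proposition \ref{prop:3.2}, I will bound the three terms on its right-hand side one at a time. The aim is to absorb everything into the dissipation $\frac1\tau\int\rho\sigma^2$ plus an extra dissipation $\frac{1}{2\tau}\int\rho|v|^4$, with remainders of the form $\tau\int\rho\mu^2$ and $(\cdots)\frac1\tau\int\rho|v|^2$.

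\emph{The collision term.} I will decompose $\mu=-\frac{\triangle\sqrt\rho}{2\sqrt\rho}+\frac12|v|^2+W$ with $W=f'(\rho)+V$. This turns $-\frac1\tau\int\rho|v|^2\mu$ into three pieces:
\[
-\frac1{2\tau}\int\rho|v|^4+\frac1{2\tau}\int\sqrt\rho|v|^2\triangle\sqrt\rho-\frac1\tau\int\rho|v|^2W.
\]
The first piece is a \emph{good} signed term. It supplies $\frac1{2\tau}\int\rho|v|^4$ on the left, and the factor $\frac12$ leaves room to absorb cross terms.

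For the middle piece, I will not estimate $\triangle\sqrt\rho$ directly. Instead I will rewrite $\frac{\triangle\sqrt\rho}{\sqrt\rho}$ through $\mu$ itself, or more effectively integrate by parts, using $v=\nabla S$, $\diver(\rho v)=-2\rho\sigma$ and $\nabla\sqrt\rho=\sqrt\rho\nabla\log\sqrt\rho$. This produces terms of the types $\int\rho|v|^2|\nabla\log\sqrt\rho|^2$, $\int\rho\sigma\,v\cdot\nabla\log\rho$ and $\int\rho\, v\cdot\nabla v\cdot\nabla\log\rho$. In each I will separate off $\rho|v|^4$ or $\rho\sigma^2$ with Young's inequality, absorbing them into $\frac{1}{4\tau}\int\rho\sigma^2+\frac{1}{2\tau}\int\rho|v|^4$. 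The leftovers carry a factor $\frac1\tau\int\rho|v|^2$ times $\|\nabla\log\sqrt\rho\|_{L^\infty}^2$, or times quantities controlled by $I$. Using $\rho\ge\delta$, the $H^2$ equivalence \eqref{eq:H2_lift} and the Gagliardo--Nirenberg/log-Sobolev bounds, these should give the coefficient $C_0\delta^{-1}E_0[M_0+I]$.

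For the third piece, $|W|\le|f'(\rho)|+|V|$. On $X(T^*)$ we have $|\rho-M_0|\le M_0-\delta$, so $|f'(\rho)|\le (M_0-\delta)^{2n-1}$, which I will express through $|f'(\delta)|$. Also $\|V\|_{L^\infty}\lesssim\|\rho-M_0\|_{L^2}\lesssim E_0^{1/2}$, and this is bounded by $M_0$-type constants. Together these give the $[M_0+|f'(\delta)|]$ part of the coefficient.

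\emph{The pressure and potential terms.} I will write $\d_tp(\rho)=p'(\rho)\d_t\rho=2p'(\rho)\rho\sigma$. Cauchy--Schwarz in $\rho\,dx$ then gives
\[
\Big|\int\mu\d_tp\Big|\le\frac1{8\tau}\int\rho\sigma^2+C\tau\|p'(\rho)\|_{L^\infty}^2\int\rho\mu^2 .
\]
Here $p'(\rho)=f''(\rho)\rho$, and $|\rho-M_0|\le M_0-\delta$, $\rho\le 2M_0$ give $\|p'\|_\infty^2\lesssim M_0^2(M_0-\delta)^{4n-4}$. This matches the $M_0(M_0-\delta)^{4n-2}$ term in $g_3$ after the bookkeeping, because the paper's normalization absorbs one factor of $M_0$ into $\int\rho\mu^2$.

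For $\int\rho\mu\d_tV$, note that $-\triangle\d_tV=\d_t\rho=-\diver(\rho v)$, so $\nabla\d_tV$ is a Riesz projection of $\rho v$. Hence
\[
\|\d_tV\|_{L^\infty}\lesssim\|\rho v\|_{L^4}\le\|\rho\|_{L^\infty}^{3/4}\|\rho^{1/4}v\|_{L^4},
\]
and
\[
\Big|\int\rho\mu\d_tV\Big|\le\frac1{4\tau}\int\rho|v|^4\cdot(\cdots)+C\tau M_0^{\cdots}\Big(\int\rho\mu^2\Big)^{\cdots}.
\]
The exponents must be balanced using $M_0$, $\|\rho\|_{L^1}$ and $E_0$ to produce the term $M_0^{4/3}E_0^{2/3}$. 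I expect this to come from a Hölder/Young split with exponents $4$ and $4/3$, where $E_0$ enters via $\|\rho v\|$-type interpolation with $\int\rho|v|^2\le 2E_0$.

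\emph{Main obstacle.} The hard step is the term $\frac1{2\tau}\int\sqrt\rho|v|^2\triangle\sqrt\rho$ inside the collision term. It is of order $\frac1\tau$ and cannot be placed in $\tau\int\rho\mu^2$. It therefore has to be split into absorbable dissipation and a part linear in $\frac1\tau\int\rho|v|^2$ with an $I$-dependent coefficient. I will first try the integration by parts described above, controlling $\|\nabla\log\sqrt\rho\|_{L^\infty}$ or $\|\nabla\sqrt\rho\|_{L^4}^2$ by $\delta^{-1}E_0^{1/2}\|\nabla\psi\|_{H^1}$ and \eqref{eq:H2_lift}. If that is insufficient, the fallback is to substitute $\frac{\triangle\sqrt\rho}{2\sqrt\rho}=-\mu+\frac12|v|^2+W$ back in, combined with an $L^\infty$ bound on $v$ from Lemma \ref{lem:logsob}.
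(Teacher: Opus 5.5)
\textbf{Verdict: there is a genuine gap.} Your pressure estimate and your bound on $-\frac1\tau\int\rho|v|^2W$ are the paper's. The other two terms, however, are handled in a way that cannot yield \eqref{eq:ineqI}.

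\textbf{The collision term.} The constraint you miss is that the $-\frac1{2\tau}\int\rho|v|^4$ produced by the collision term is exactly the $\frac1{2\tau}\int\rho|v|^4$ on the left of \eqref{eq:ineqI}. So there is no room to absorb anything into it. Every other term of order $1/\tau$ must either go into $\frac1\tau\int\rho\sigma^2$ or carry a factor $\frac1\tau\int\rho|v|^2$; this is also what the later Gronwall step needs, since $\frac1\tau\int_0^t\!\int\rho|v|^2\le E_0$.

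Your plan for $\frac1{2\tau}\int\sqrt\rho|v|^2\triangle\sqrt\rho$ breaks this constraint in each of its variants:
\begin{itemize}
\item A Young split of, say, $\frac1\tau\int\rho|v|^2|\nabla\log\sqrt\rho|^2$ against $\int\rho|v|^4$ leaves $\frac C\tau\int\rho|\nabla\log\sqrt\rho|^4$. This is $v$-free and $O(1/\tau)$, so it fits neither $C\tau\int\rho\mu^2$ nor the $\frac1\tau\int\rho|v|^2$ term.
\item Pulling out $\|\nabla\log\sqrt\rho\|_{L^\infty}$ or $\|v\|_{L^\infty}$ is unavailable. $\psi\in H^2(\T^2)$ only gives $\nabla\psi\in H^1\not\subset L^\infty$, and Lemma \ref{lem:logsob} applied to $v$ needs $\triangle v\in L^2$, i.e. $\psi\in H^3$.
\item Substituting $\frac{\triangle\sqrt\rho}{2\sqrt\rho}=-\mu+\frac12|v|^2+W$ back in just undoes your decomposition.
\end{itemize}

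The missing idea is the div--curl structure of $\sqrt\rho v$.
\begin{enumerate}
\item Integrate by parts: $\frac1{2\tau}\int\sqrt\rho|v|^2\triangle\sqrt\rho=-\frac1\tau\int v\cdot\nabla(\sqrt\rho v)\cdot\nabla\sqrt\rho+\frac1{2\tau}\int|v|^2|\nabla\sqrt\rho|^2$.
\item Since $\operatorname{curl}v=0$, you have $\operatorname{curl}(\sqrt\rho v)=\nabla\sqrt\rho\times v$ and $\diver(\sqrt\rho v)=-2\sqrt\rho\sigma-\nabla\sqrt\rho\cdot v$. The Helmholtz identity on $\T^2$ then gives $\|\nabla(\sqrt\rho v)\|_{L^2}\lesssim\|\sqrt\rho\sigma\|_{L^2}+\||v||\nabla\sqrt\rho|\|_{L^2}$.
\item Next, $\delta\||v||\nabla\sqrt\rho|\|_{L^2}^2\le\|\sqrt\rho v\|_{L^4}^2\|\nabla\psi\|_{L^4}^2$. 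Apply Ladyzhenskaya's inequality to both factors, so that exactly one factor $\|\sqrt\rho v\|_{L^2}$ survives.
\item Re-expand the factor $\|\nabla(\sqrt\rho v)\|_{L^2}$ by the Helmholtz bound and absorb it into $\frac1{8\tau}\int\rho\sigma^2$ and into the left-hand side itself. Control $\|\nabla\psi\|_{L^2}\|\triangle\psi\|_{L^2}$ through $E_0$ and $I+E$ via \eqref{eq:H2_lift}.
\end{enumerate}
This is where the $I$-dependent coefficient of $\frac1\tau\int\rho|v|^2$ comes from; $\rho|v|^4$ is never used for absorption.

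\textbf{The potential term.} It fails for the same reason. Writing $\d_t\rho=-\diver(\rho v)$ pairs $\sqrt\rho\mu$ with a single power of $v$. Your $4$--$\tfrac43$ Young split then gives $\tau^{1/3}\bigl(\int\rho\mu^2\bigr)^{2/3}$, not $\tau\int\rho\mu^2$. A $2$--$2$ split gives $\frac1\tau\bigl(\int\rho|v|^4\bigr)^{1/2}$, which can neither be absorbed nor placed on the right-hand side.

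Instead, pair $\mu$ with $\sigma$, exactly as you did for the pressure. Since $\d_t\rho=2\sqrt\rho\,\d_t\sqrt\rho=2\rho\sigma$, elliptic regularity gives $\|\d_tV\|_{L^3}\lesssim\|\d_t\rho\|_{L^{3/2}}\le2\|\sqrt\rho\|_{L^6}\|\sqrt\rho\sigma\|_{L^2}$. Hence $|\int\rho\mu\,\d_tV|\lesssim\|\sqrt\rho\|_{L^6}^2\|\sqrt\rho\mu\|_{L^2}\|\sqrt\rho\sigma\|_{L^2}$, and Young with weights $\tau$ and $\frac1\tau$ finishes. Bounding $\|\sqrt\rho\|_{L^6}$ by Gagliardo--Nirenberg in terms of $M_0$ and $E_0$ is what produces the $E_0$-dependent part of $g_3$.

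\textbf{A side remark on the pressure constant.} Saying a factor $M_0$ is ``absorbed into $\int\rho\mu^2$'' is not an argument. The direct pressure estimate gives the constant $M_0^2(M_0-\delta)^{4n-4}$, which is also what the paper's own proof obtains before it is summarized into $g_3$.
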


\begin{proof}
Recalling \eqref{eq:rl_I} in Proposition \ref{prop:3.2}, we have
\[
\frac{d}{dt} I(t)+\frac{1}{\tau}\int_{\T^2}\rho \sigma^2 dx=\int_{\T^2} \mu\d_tp(\rho)dx+\int_{\T^2}\rho\mu\d_tV dx-\frac{1}{\tau}\int_{\T^2}\rho |v|^2\mu dx.
\]
The right-hand side can be estimated as follows. First we have
\[
\int_{\T^2} \mu\d_tp(\rho)dx\leq 2\|p'(\rho)\|_{L^\infty_x}\|\sqrt\rho\mu\|_{L^2_x}\|\d_t\sqrt\rho\|_{L^2_x}.
\]
Recalling that
\[
p'(\rho)=(2n-1)\rho(\rho-M_0)^{2n-2},
\]
and using the definition of the space $X(T^*)$ in \eqref{eq:spaceX}, it follows that
\[
\|p'(\rho)\|_{L^\infty_x}\leq 2(2n-1)M_0(M_0-\delta)^{2n-2}.
\]
Thus we obtain
\begin{align*}
\int_{\T^2} \mu\d_tp(\rho)dx\leq & 2(2n-1)M_0(M_0-\delta)^{2n-2}\|\sqrt\rho\mu\|_{L^2_x}\|\d_t\sqrt\rho\|_{L^2_x}\\
\leq & C_0M_0^2(M_0-\delta)^{4n-4}\tau\int_{\T^2}\rho\mu^2dx+\frac{1}{4\tau}\int_{\T^2}\rho\sigma^2dx.
\end{align*}

For the integral $\int_{\T^2} \rho\mu\d_tVdx$, we have
\[
\int_{\T^2} \rho\mu\d_tVdx\leq 2\|\sqrt\rho\|_{L^6_x}\|\sqrt\rho\mu\|_{L^2_x}\|\d_tV\|_{L^3_x}.
\]
By the Poisson equation of $V$, we have $-\triangle\d_tV=\d_t\rho$, then by embedding inequalities it follows that
\[
\|\d_tV\|_{L^3_x}\leq C_0\|\d_t\rho\|_{L^\frac32_x}=2C_0\|\sqrt\rho\|_{L^6_x}\|\d_t\sqrt\rho\|_{L^2_x},
\]
and
\[
\|\sqrt\rho\|_{L^6_x}\leq C_0\|\sqrt\rho\|_{L^2_x}^\frac23\|\nabla\sqrt\rho\|_{L^2_x}^{\frac13}+C_0\|\sqrt\rho\|_{L^2_x}\leq C_0(M_0^\frac12+M_0^\frac13E_0^\frac16).
\]
Thus we obtain
\begin{align*}
\int_{\T^2} \rho\mu\d_tVdx\leq & C_0(M_0+M_0^\frac23E_0^\frac13)\|\sqrt\rho\mu\|_{L^2_x}\|\sqrt\rho\sigma\|_{L^2_x}\\
\leq & C_0(M_0^2+M_0^\frac43E_0^\frac23)\tau \|\sqrt\rho\mu\|_{L^2_x}^2+\frac{1}{4\tau}\int_{\T^2}\rho\sigma^2dx.
\end{align*}

For the last integral in the right-hand side of \eqref{eq:rl_I}, by recalling
\[
\rho\mu=-\frac14\triangle\rho+\frac12|\nabla\sqrt\rho|^2+\frac12\rho |v|^2+f'(\rho)\rho+\rho V,
\]
we write 
\begin{align*}
-\tau^{-1}\int_{\T^2}\rho |v|^2\mu dx=&\frac{1}{4\tau}\int_{\T^2} |v|^2\triangle\rho dx-\frac{1}{2\tau}\int_{\T^2} |v|^2|\nabla\sqrt\rho|^2dx\\
&-\frac{1}{2\tau}\int_{\T^2} \rho |v|^4 dx-\tau^{-1}\int_{\T^2} (f'(\rho)+V)\rho |v|^2dx.
\end{align*}
By integrating by parts, it follows that
\begin{align*}
\frac{1}{4\tau}\int_{\T^2} |v|^2\triangle\rho dx=-\frac{1}{\tau}\int_{\T^2}   \nabla\sqrt\rho\cdot(\sqrt\rho\nabla v)\cdot v dx\\
=-\frac{1}{\tau}\int_{\T^2}  v \cdot\nabla (\sqrt\rho v)\cdot\nabla\sqrt\rho dx+\frac{1}{\tau}\int_{\T^2}  |v|^2|\nabla\sqrt\rho|^2 dx
\end{align*}
Recall that $v=\nabla S$ is irrotational, which implies
\[
\operatorname{curl}(\sqrt\rho v)=\nabla\sqrt\rho\times v,
\]
then by Helmholtz decomposition, we have
\begin{align*}
\|\nabla (\sqrt\rho v)\|_{L^2_x}\leq & C_0\|\diver (\sqrt\rho v)\|_{L^2_x}+C_0\|\nabla\sqrt\rho\times v\|_{L^2_x}\\
\leq & C_0 \|\sqrt\rho \sigma\|_{L^2_x}+C_0\||\nabla\sqrt\rho| |v|\|_{L^2_x}.
\end{align*}
Therefore we obtain
\[
\frac{1}{4\tau}\int_{\T^2} |v|^2\triangle\rho dx\leq \frac{1}{8\tau}\int_{\T^2}  \rho\sigma^2 dx+\frac{C_0}{\tau}\int_{\T^2}  |v|^2|\nabla\sqrt\rho|^2 dx.
\]
Now we need to estimate the integral of $|v|^2|\nabla\sqrt\rho|^2$. Recall that $\inf_{t,x}\rho\geq\delta$, then by the Gagliardo-Nirenberg inequality,  we have
\begin{align*}
\delta\int_{\T^2}  |v|^2|\nabla\sqrt\rho|^2 dx\leq & \|\sqrt\rho v\|_{L^4_x}^2\|\nabla\psi\|_{L^4_x}^2\\
\leq & C_0\|\sqrt\rho v\|_{L^2_x}\|\nabla(\sqrt\rho v)\|_{L^2_x}\|\nabla\psi\|_{L^2_x}\|\triangle\psi\|_{L^2_x}\\
&+C_0\|\sqrt\rho v\|_{L^2_x}^2\|\nabla\psi\|_{L^2_x}^2\\
\leq & C_0\|\sqrt\rho v\|_{L^2_x}\left(\|\sqrt\rho \sigma\|_{L^2_x}+\||\nabla\sqrt\rho| |v|\|_{L^2_x}\right)\|\nabla\psi\|_{L^2_x}\|\triangle\psi\|_{L^2_x}\\
&+C_0\|\sqrt\rho v\|_{L^2_x}^2\|\nabla\psi\|_{L^2_x}^2,
\end{align*}
which implies
\begin{align*}
\int_{\T^2}  |v|^2|\nabla\sqrt\rho|^2 dx\leq & C_0\delta^{-1}\|\sqrt\rho v\|_{L^2_x}\|\sqrt\rho \sigma\|_{L^2_x}\|\nabla\psi\|_{L^2_x}\|\triangle\psi\|_{L^2_x}\\
&+C_0\delta^{-1}\|\sqrt\rho v\|_{L^2_x}^2\|\nabla\psi\|_{L^2_x}^2\|\triangle\psi\|_{L^2_x}^2+C_0\delta^{-1}\|\sqrt\rho v\|_{L^2_x}^2\|\nabla\psi\|_{L^2_x}^2.
\end{align*}
Again by Proposition \ref{prop:lift2} we have
\[
\frac12\|\triangle\psi\|_{L^2_x}^2\leq C_0\left(I(t)+E(t)\right).
\]
Thus we obtain
\begin{align*}
\frac{1}{\tau}\int_{\T^2}  |v|^2|\nabla\sqrt\rho|^2dx\leq \frac{1}{8\tau}\int_{\T^2}  \rho\sigma^2 dx+C_0\delta^{-1}E_0[1+I(t)]\tau^{-1}\int_{\T^2} \rho |v|^2dx.
\end{align*}
Last, again by the bound $|\rho-M_0|\leq M_0-\delta$, we have
\begin{align*}
\frac{1}{\tau}\int_{\T^2} (f'(\rho)+V)\rho |v|^2dx\leq&  \|f'(\rho)+V\|_{L^\infty_x}\frac{1}{\tau}\int_{\T^2} \rho |v|^2dx\\
\leq & C_0(|f'(\delta)|+M_0)\frac{1}{\tau}\int_{\T^2} \rho |v|^2dx.
\end{align*}
Summarizing the inequalities above, we obtain
\begin{align*}
\frac{d}{dt} & I(t)+\frac{1}{4\tau}\int_{\T^2}\rho\sigma^2dx+\frac{1}{2\tau}\int_{\T^2}\rho |v|^4 dx \\
\leq & C_0g_3(M_0,E_0,\delta)\tau \int_{\T^2}\rho\mu^2dx+C_0\delta^{-1}E_0[M_0+|f'(\delta)|+I(t)]\frac{1}{\tau}\int_{\T^2} \rho |v|^2dx,
\end{align*}
which finishes the proof.
\end{proof}

As a consequence of Proposition \ref{prop:3.2}, Proposition \ref{prop:dtentr} and Proposition \ref{prop:bdI}, we have the following decay estimate of a combination functional.

\begin{prop}\label{prop:F_2d}
Let
\begin{equation}\label{eq:c1_2d}
c_1=\frac{1}{4C_0nM_0E_0}\min\left\{g_3(M_0,E_0,\delta)^{-1},[\delta^{-1}E_0(M_0+|f'(\delta)|)]^{-1}\right\},
\end{equation}
where $g_3$ is given in Proposition \ref{prop:bdI}, and let us define the combination functional
\begin{equation}\label{eq:F}
F(t)=H(\rho)+E(t)+c_1I(t).
\end{equation}
Then on the time interval $[0,T^*)$ of existence for $\psi\in X(T^*)$, there exist $c_2,\tau^*>0$ such that for $0<\tau<\tau^*$, we have
\begin{equation}\label{eq:decay}
F(t)\leq F(0)\exp\left(\frac{C_0}{\tau}\int_0^t\int_{\T^2}\rho |v|^2dxds- c_2\tau t\right).
\end{equation}
\end{prop}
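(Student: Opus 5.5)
The plan is to combine the three differential relations, \eqref{eq:en_disp}, \eqref{eq:ineq_entr} and \eqref{eq:ineqI}, into a single Gronwall-type inequality for a slightly modified functional. The modification is
\[
\tilde F(t)=F(t)+\tau\int_{\T^2}\log\rho\,\d_t\rho\,dx,
\]
which absorbs the time-derivative term produced by the entropy identity.

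\textbf{Step 1: sum the three inequalities.} Add \eqref{eq:en_disp} in differential form, \eqref{eq:ineq_entr}, and $c_1$ times \eqref{eq:ineqI}. This gives
\begin{align*}
\frac{d}{dt}\tilde F+\frac{\tau}{2}\int\rho|\nabla^2\log\sqrt\rho|^2+\tau\int(\rho-M_0)^2+\frac{c_1}{4\tau}\int\rho\sigma^2+\frac{c_1}{2\tau}\int\rho|v|^4\\
\le 4\tau\int\rho\sigma^2+2\tau\int\rho|v|^4+c_1C_0g_3\tau\int\rho\mu^2\\
+\Big(c_1C_0\delta^{-1}E_0[M_0+|f'(\delta)|+I]-1\Big)\frac1\tau\int\rho|v|^2 .
\end{align*}
The pressure term $4\tau\int p'(\rho)|\nabla\sqrt\rho|^2$ has a good sign and is dropped.

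\textbf{Step 2: absorb the bad terms.} For $\tau\le\tau^*$ small enough that $4\tau\le c_1/(8\tau)$ and $2\tau\le c_1/(4\tau)$, the $\sigma^2$ and $|v|^4$ terms on the right are absorbed by the dissipation on the left.

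\textbf{Step 3: control $\mu$ by the entropy dissipation.} Write $\rho\mu$ out: $\sqrt\rho\mu$ equals $-\frac12\triangle\sqrt\rho+\frac12\sqrt\rho|v|^2+\sqrt\rho(f'(\rho)+V)$. Lemma \ref{lem:H^2log} bounds $\|\triangle\sqrt\rho\|_{L^2}^2$ by $4\int\rho|\nabla^2\log\sqrt\rho|^2$. The remaining pieces are estimated as follows:
\begin{itemize}
\item $\int\rho|v|^4$ is already dissipated;
\item $\|f'(\rho)\|_{L^\infty}\lesssim (M_0-\delta)^{2n-2}|\rho-M_0|$, using $|\rho-M_0|\le M_0-\delta$ in $X(T^*)$, so $\int\rho f'(\rho)^2\lesssim M_0(M_0-\delta)^{4n-4}\int(\rho-M_0)^2$;
\item $\int\rho V^2\lesssim M_0\|\rho-M_0\|_{L^2}^2$ by elliptic estimates.
\end{itemize}
Consequently,
\[
c_1C_0g_3\tau\int\rho\mu^2\le C c_1 g_3\tau\Big(\int\rho|\nabla^2\log\sqrt\rho|^2+\int(\rho-M_0)^2\Big)+Cc_1g_3\tau\int\rho|v|^4 .
\]
The choice \eqref{eq:c1_2d} of $c_1$, with the factor $g_3^{-1}$ and the large constant $4C_0nM_0E_0$ in the denominator, makes the first bracket at most half of the entropy dissipation. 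The last term is absorbed for small $\tau$.

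\textbf{Step 4: handle the velocity coefficient.} The coefficient of $\frac1\tau\int\rho|v|^2$ is the delicate part, because of the factor $c_1 I$. Here the definition of $c_1$ makes $c_1C_0\delta^{-1}E_0(M_0+|f'(\delta)|)\le 1$. The leftover $c_1C_0\delta^{-1}E_0 I$ is at most $C_0 F\cdot\delta^{-1}E_0$, which is at most $C_0 F$ after renaming constants. This yields
\[
\frac{d}{dt}\tilde F+\frac{\tau}{4}D\le \frac{C_0}{\tau}\Big(\int\rho|v|^2\Big)F,
\qquad
D=\int\rho|\nabla^2\log\sqrt\rho|^2+\int(\rho-M_0)^2+\int\rho\mu^2+\int\rho\sigma^2+\int\rho|v|^4 .
\]

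\textbf{Step 5: coercivity.} This is the main obstacle: I need $D\ge c\,F$, and also $|\tilde F-F|\le \frac12F$.

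For the coercivity $D\ge cF$:
\begin{itemize}
\item Entropy: on $\{\delta\le\rho\le 2M_0\}$, $H(\rho)\le C\delta^{-1}\|\rho-M_0\|_{L^2}^2$.
\item Energy: $\int|\nabla\sqrt\rho|^2\lesssim\|\nabla^2\sqrt\rho\|_{L^2}^2$ by Poincar\'e, and $\int f(\rho)+|\nabla V|^2\lesssim\int(\rho-M_0)^2$. The kinetic part $\int\rho|v|^2$ needs a Poincar\'e inequality for $v=\nabla S$. Here the zero-average condition \eqref{eq:avrg0} together with $\operatorname{curl}v=0$ gives $\|v\|_{L^2}\lesssim\|\diver v\|_{L^2}$, and $\diver v$ is controlled by $\sigma$ and $|\nabla\sqrt\rho||v|$. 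Alternatively I would simply use $\int\rho|v|^2\le M_0^{1/2}(\int\rho|v|^4)^{1/2}$. This is not linear in $F$, but it is bounded by $C(E_0)\int\rho|v|^4/\sqrt{\cdot}$; to keep the argument clean I would rather use the Poincar\'e route.
\item $I$: it is bounded by $\int\rho\mu^2+\int\rho\sigma^2$ directly.
\end{itemize}

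For the comparability of $\tilde F$ and $F$, I use
\[
\tau\Big|\int\log\rho\,\d_t\rho\Big|\le C(\delta,M_0)\,\tau\|\rho-M_0\|_{L^2}\|\sqrt\rho\sigma\|_{L^2},
\]
which is $\le\frac12F$ once $\tau$ is small.

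\textbf{Conclusion.} Gronwall applied to $\tilde F$ gives $\tilde F(t)\le\tilde F(0)\exp\big(\frac{C_0}{\tau}\int_0^t\int\rho|v|^2-c\tau t\big)$. Comparability then transfers this to $F$, after adjusting constants. If the comparison factor $\frac32/\frac12$ must be removed to get exactly $F(0)$ as the prefactor, I would absorb it into the exponent for $t$ large; otherwise I would keep a constant.
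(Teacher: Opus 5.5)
Your skeleton (sum into $F+\tau\int_{\T^2}\log\rho\,\d_t\rho\,dx$, absorb for small $\tau$, prove coercivity, apply Gronwall) matches the paper's. However, one step fails as written, and it comes from discarding $4\tau\int_{\T^2}p'(\rho)|\nabla\sqrt\rho|^2dx$ in Step~1.

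In Step~3 you use the $L^\infty$ bound of $X(T^*)$ to get $\int_{\T^2}\rho f'(\rho)^2dx\lesssim M_0(M_0-\delta)^{4n-4}\int_{\T^2}(\rho-M_0)^2dx$. Absorbing $c_1C_0g_3\tau\int_{\T^2}\rho\mu^2dx$ into $\tau\int_{\T^2}(\rho-M_0)^2dx$ then requires $c_1g_3M_0(M_0-\delta)^{4n-4}\lesssim 1$. Both sides are $O(\tau)$, so small $\tau$ does not help. But \eqref{eq:c1_2d} only gives $c_1g_3\le(4C_0nM_0E_0)^{-1}$, so you would need $(M_0-\delta)^{4n-4}\lesssim nE_0$. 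For $n\ge2$ this fails in exactly the large-mass regime the paper targets, where \eqref{eq:cond_2} forces $E_0\lesssim M_0-\delta$. Your claim that the prescribed $c_1$ makes this term at most half the dissipation is therefore not supported by your own estimate.

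The paper keeps the pressure dissipation. It bounds $\int_{\T^2}\rho f'(\rho)^2dx$ by $\int_{\T^2}p'(\rho)|\nabla\sqrt\rho|^2dx$, using Gagliardo--Nirenberg for $(\rho-M_0)^n$ together with $\int_{\T^2}(\rho-M_0)^{2n}dx\le 2nE_0$. The resulting constant depends on the energy (the paper writes $C_0nM_0E_0$) rather than on $(M_0-\delta)^{4n-4}$. That is exactly where the factor $(4C_0nM_0E_0)^{-1}$ in $c_1$ comes from. Alternatively you could shrink $c_1$ by a factor $(M_0-\delta)^{-(4n-4)}$, but then you are proving a variant, and the power of $M_0-\delta$ in \eqref{eq:cond_2} changes downstream.

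Your ``main obstacle'' in Step~5 is self-inflicted, because Step~4 uses up the entire energy dissipation. If $c_1$ makes the coefficient of $\frac1\tau\int_{\T^2}\rho|v|^2dx$ at most $\frac12$, as the paper arranges, then $\frac{1}{2\tau}\int_{\T^2}\rho|v|^2dx$ stays on the left. It dominates $c_2\tau\cdot\frac12\int_{\T^2}\rho|v|^2dx$ once $c_2\tau^2\le 1$, so no Poincar\'e inequality for $v$ is needed.

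Your Poincar\'e route does close, but only through the pointwise bound $\rho^{-1}|\nabla\sqrt\rho|^2|v|^2\le\frac12\rho^{-2}|\nabla\sqrt\rho|^4+\frac12|v|^4$, combined with Lemma \ref{lem:H^2log} and $\rho\ge\delta$. Going through $\|\nabla\sqrt\rho\|_{L^4_x}\|v\|_{L^4_x}$ instead reproduces the square-root nonlinearity you noticed.

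Your other deviations are harmless. The Taylor bound $H(\rho)\le(2\delta)^{-1}\|\rho-M_0\|_{L^2_x}^2$ is even simpler than the log-Sobolev route. For comparability, the paper's identity $\tau|\int_{\T^2}\log\rho\,\d_t\rho\,dx|=2\tau|\int_{\T^2}\nabla\sqrt\rho\cdot\sqrt\rho v\,dx|\le 2\tau E(t)$ is cleaner than your bound, since it involves only the energy. Finally, the factor $3$ produced by \eqref{eq:equivF} is also implicitly present in the paper's Gronwall step. It cannot be absorbed uniformly in $t$, so simply keep it.
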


\begin{proof}
By using \eqref{eq:en_disp}, \eqref{eq:ineq_entr}, \eqref{eq:ineqI} and our choice of $c_1$, the time derivative of $F(t)$ satisfies
\begin{equation}\label{eq:dtF_1}
\begin{aligned}
\frac{d}{dt}[F(t)+&\tau\int_{\T^2}\log\rho\d_t\rho dx]+\frac{1}{\tau}\int_{\T^2}\rho |v|^2 dx+\left(\frac{c_1}{\tau}-4\tau\right)\int_{\T^2}\rho\sigma^2dx\\
&+\left(\frac{c_1}{2\tau}-\frac{2\tau}{\delta}\right)\int_{\T^2} \rho |v|^4 dx+\frac{\tau}{2}\int_{\T^2}\rho|\nabla^2\log\sqrt\rho|^2dx\\
&+4\tau\int_{\T^2} p'(\rho)|\nabla\sqrt\rho|^2dx+\tau\int_{\T^2}(\rho-M_0)^2dx\\
\leq & \frac{\tau}{4C_0nM_0E_0}\int_{\T^2}\rho\mu^2dx+c_1I(t)\frac{C_0}{\tau}\int_{\T^2} \rho |v|^2dx.
\end{aligned}
\end{equation}
To prove the exponential decay \eqref{eq:decay}, we need to control the right-hand side and $F(t)$ itself by the dissipative terms.

From \eqref{eq:chem} we can write
\[
2\sqrt\rho\mu=-\triangle\sqrt\rho+\sqrt\rho |v|^2+2(f'(\rho)+V)\sqrt\rho,
\]
which implies
\[
2\int_{\T^2}\rho\mu^2 dx\leq \int_{\T^2}(\triangle\sqrt\rho)^2dx+\int_{\T^2} \rho |v|^4 dx+4\int_{\T^2} (f'(\rho)^2+V^2)\rho dx.
\]
For the last integral, recalling that $f(\rho)=(2n)^{-1}(\rho-M_0)^{2n}$, we have
\[
\int_{\T^2} f'(\rho)^2\rho dx=\int_{\T^2} \rho(\rho-M_0)^{4n-2} dx\leq 2M_0 \int_{\T^2} (\rho-M_0)^{4n-2} dx.
\]
On the other hand, by 
\[
p'(\rho)=\rho f''(\rho)=(2n-1)\rho(\rho-M_0)^{2n-2},
\]
we have
\begin{align*}
4\int_{\T^2} p'(\rho)|\nabla\sqrt\rho|^2dx=&\int_{\T^2} (2n-1)(\rho-M_0)^{2n-2}|\nabla\rho|^2dx\\
=&(2n-1)n^{-2}\int_{\T^2}|\nabla(\rho-M_0)^n|^2dx.
\end{align*}
Then by combining it with the bound of the internal energy and using Gagliardo-Nirenberg inequality, it follows that
\begin{align*}
\int_{\T^2} f'(\rho)^2\rho dx\leq & C_0 M_0 \left[\int_{\T^2}(\rho-M_0)^{2n}\right]^{2-\frac{1}{n}-\beta_n}\left[\int_{\T^2}|\nabla(\rho-M_0)^n|^2\right]^{\beta_n},
\end{align*}
where
\[
\beta_n=1-\frac{1}{n}.
\]
Combining it with the energy dissipation \eqref{eq:en_disp}, we obtain
\[
\int_{\T^2} f'(\rho)^2\rho dx\leq C_0nM_0E_0\int_{\T^2} p'(\rho)|\nabla\sqrt\rho|^2dx.
\]
Also, by the Poisson equation of $V$ and Poincar\'e inequality, we have
\[
\int_{\T^2}\rho V^2dx\leq \|\rho\|_{L^\infty_x}\int_{\T^2} V^2dx\leq C_0M_0\int_{\T^2} (\rho-M_0)^2dx.
\]
Thus by our choice of $c_1$, we obtain
\begin{align*}
 &\frac{1}{4C_0nM_0E_0}\int_{\T^2}\rho\mu^2 dx\\
\leq & \frac{1}{4C_0nM_0E_0}\left[\int_{\T^2}(\triangle\sqrt\rho)^2dx+\int_{\T^2} \rho |v|^4 dx\right.\\
&\hspace{2.5cm}\left.+4\int_{\T^2} f'(\rho)^2\rho dx+4\int_{\T^2}\rho V^2dx\right]\\
\leq& \frac14\int_{\T^2}(\triangle\sqrt\rho)^2dx+\frac{1}{4}\int_{\T^2} \rho |v|^4 dx\\
&+2\int_{\T^2} p'(\rho)|\nabla\sqrt\rho|^2dx+\frac12\int_{\T^2}(\rho-M_0)^2dx.
\end{align*}
Using \eqref{eq:logH2}, we can rewrite \eqref{eq:dtF_1} as
\begin{equation}\label{eq:dtF_2}
\begin{aligned}
\frac{d}{dt}[F(t)+&\tau\int_{\T^2}\log\rho \d_t\rho  dx]+\frac{1}{2\tau}\int_{\T^2}\rho |v|^2 dx+\frac{\tau}{4} \int_{\T^2}\rho\mu^2 dx\\
&+\left(\frac{c_1}{2\tau}-4\tau\right)\int_{\T^2}\rho \sigma ^2dx+\left(\frac{c_1}{2\tau}-\frac{2\tau}{\delta}-\frac{\tau}{4}\right)\int_{\T^2} \rho  |v| ^4 dx\\
&+\frac{\tau}{4}\int_{\T^2}|\nabla^2\sqrt\rho|^2dx
+\frac{8\tau}{3}\int_{\T^2}|\nabla\rho^\frac14|^4dx
+2\tau\int_{\T^2} p'(\rho )|\nabla\sqrt\rho|^2dx\\
&+\frac{\tau}{2}\int_{\T^2}(\rho-M_0)^2dx\leq  c_1I(t)\frac{C_0}{\tau}\int_{\T^2} \rho |v|^2dx,
\end{aligned}
\end{equation}
where in the left-hand side we have
\[
\frac{\tau}{4}\int_{\T^2}\rho\mu^2 dx+\left(\frac{c_1}{2\tau}-4\tau\right)\int_{\T^2}\rho\sigma^2dx\geq \frac{\tau}{2} I(t)
\]
when $\tau\leq \tau^*\leq \frac{c_1^\frac12}{4}$.

It remains to control $H(\rho)+E(t)$ by the dissipative terms in \eqref{eq:dtF_2}. We recall the following technical lemma.

\begin{lem}[Logarithmic Sobolev inequality \cite{DGJ}]\label{lem:logineq}
Let $u\in H^1_x(\T^2)$ and $\overline{u^2}=|\T^2|^{-1}\int_{\T^2} u^2 dx$. Then
\begin{equation}\label{eq:logineq}
\int_{\T^2} u^2\log(u^2/\overline{u^2})dx\leq C_0\int_{\T^2} |\nabla u|^2dx.
\end{equation}
\end{lem}

By letting $u=\sqrt\rho$ in Lemma \ref{lem:logineq} and applying again Poincar\'e inequality, we obtain
\[
H(\rho)\leq C_0\int_{\T^2} |\nabla \sqrt\rho|^2dx\leq C_0\int_{\T^2} |\nabla^2 \sqrt\rho|^2dx.
\]
For the total energy $E(t)$, the quantum part $\int_{\T^2}|\nabla\sqrt\rho|^2dx$ also follows from the inequality above. On the other hand, using the embedding inequality and the interpolation, the internal energy can be bounded by
\begin{align*}
\int_{\T^2} f(\rho)dx=&\frac{1}{2n}\int_{\T^2}(\rho-M_0)^{2n}dx\\
\leq &\frac{C_0}{n}\int_{\T^2}|\nabla(\rho-M_0)^n|^2dx+\frac{C_0}{n}\left|\int_{\T^2}(\rho-M_0)^n dx\right|^2\\
\leq & C_0 \int_{\T^2} p'(\rho )|\nabla\sqrt\rho |^2dx+C(M_0,E_0)\int_{\T^2}(\rho -M_0)^2dx.
\end{align*}
We also have the bound for the electric potential:
\[
\int_{\T^2}|\nabla V|^2dx\leq C_0\int_{\T^2}(\rho-M_0)^2dx.
\]
Thus there exists $c_2>0$ such that
\begin{align*}
\frac{1}{2\tau}\int_{\T^2}\rho |v|^2 dx+\frac{\tau}{4}\int_{\T^2}|\nabla^2\sqrt\rho |^2dx&+2\tau\int_{\T^2} p'(\rho )|\nabla\sqrt\rho |^2dx\\
&+\frac{\tau}{2}\int_{\T^2}(\rho -M_0)^2dx\geq c_2\tau[H(\rho)+E(t)].
\end{align*}
Then by letting $\tau$ small such that
\[
\frac{c_1}{2\tau}-\frac{2\tau}{\delta}-\frac{\tau}{4}\geq 0,
\]
namely $\tau\leq \left(\frac{2c_1\delta}{8+\delta}\right)^\frac12$, it follows from \eqref{eq:dtF_2} that
\begin{equation}\label{eq:dtF_3}
\frac{d}{dt}[F(t)+\tau\int_{\T^2}\log\rho\d_t\rho dx]+c_2\tau F(t)\leq \frac{C_0}{\tau}F(t)\int_{\T^2} \rho |v|^2dx.
\end{equation}
Finally, by integrating by parts,
\begin{align*}
\tau\left|\int_{\T^2}\log\rho\,\d_t\rho\,dx\right|
=\tau\left|\int_{\T^2}\log\rho\,\diver J\,dx\right|
=2\tau\left|\int_{\T^2} \nabla \sqrt\rho\cdot (\sqrt\rho v)\,dx\right|
\leq 2\tau E(t).
\end{align*}
Then, we have for $\tau< \frac14$ that
\begin{equation}\label{eq:equivF}
\frac12F(t)\leq F(t)+\tau\int_{\T^2}\log\rho\d_t\rho dx\leq \frac32 F(t).
\end{equation}
Thus combining \eqref{eq:dtF_3}, \eqref{eq:equivF} and using the Gronwall inequality, we finish the proof of \eqref{eq:decay}.

\end{proof}

As a consequence of Proposition \ref{prop:bdI}, we can extend the local existence result for the Cauchy problem \eqref{eq:cauchy_NLS} to a global one, which proves the global well-posedness of the Schr\"odinger--Langevin equation stated in Theorem \ref{thm:NLS}.

\begin{proof}[Proof of Theorem \ref{thm:NLS}]
By the decay estimate \eqref{eq:decay}, on the time interval $[0,T^*)$ of existence, we have
\[
F(t)\leq F(0)\exp\left(\frac{C_0}{\tau}\int_0^t\int_{\T^2}\rho |v|^2dxds\right),
\]
where the combined functional $F(t)$ is given by \eqref{eq:F}. The energy dissipation \eqref{eq:en_disp} implies
\[
\frac{1}{\tau}\int_0^t\int_{\T^2}\rho |v|^2dxds\leq E_0.
\]
Then it follows from \eqref{eq:H2_lift} that
\begin{align*}
\|\nabla\psi(t,\cdot)\|_{H^1_x}\leq & \sqrt{2}\,[I(t)+C_0 E(t)]^\frac12\\
\leq & [2c_1^{-1} F(t)]^\frac12\leq \sqrt{2}\,c_1^{-\frac12} F(0)^\frac12 e^{C_0E_0}.
\end{align*}
Therefore, for initial data for which $F(0)$ is suitably small, we have
\[
\|\nabla\psi\|_{L^\infty_tH^1_x}\leq C_*(M_0,\delta)
\]
for all $t\in[0,T^*)$, which indicates that we can extend the local well-posedness result to a global one, and the decay estimate \eqref{eq:decay} always holds.

Moreover, we can check the dependence of the initial functional norm $(E_0,F(0))$ on $M_0$. By the $L^\infty_{x}$ bound \eqref{eq:Linfty2} and our argument of the well-posedness result, the bound of $(E_0,F(0))$ needs to guarantee that
\[
g_2(E_0^\frac12,c_1^{-\frac12} F(0)^\frac12 e^{C_0E_0})\leq M_0-\delta,
\]
where $g_2$ is given in Lemma \ref{lem:Linfty_2d}. A sufficient condition for this inequality is
\begin{equation}\label{eq:dep_2d}
C_0 E_0\left[1+|\log c_1^{-\frac12}|+|\log F(0)|\right]\leq M_0-\delta.
\end{equation}
By the definition of $c_1$ in Proposition \ref{prop:F_2d}, we have
\[
c_1^{-\frac12}\leq C_0E_0\delta^{-\frac12}\sqrt{n}(M_0-\delta)^{2n}.
\]
For fixed $\delta>0$ and $n\in\N$, the left-hand side of \eqref{eq:dep_2d} depends logarithmically on $M_0$, while the right-hand side is linear in $M_0$. For example, we can choose an $\eps_0>0$ small and independent of the norms of initial data, such that condition \eqref{eq:cond_2} in Theorem \ref{thm:glob2}: 
\[
e^{E_0}\cdot(1+ I_0)\leq \eps_0\frac{e^{M_0-\delta}}{(M_0-\delta)^{2n}},
\]
then \eqref{eq:dep_2d} holds. Using the equivalence between the hydrodynamic energy functionals and the Sobolev norms of $\psi$,
\[
E_0 \sim \|\nabla\psi_0\|_{L^2}^2,
\qquad
I_0 \sim \|\nabla\psi_0\|_{H^1}^2,
\]
we obtain the following equivalent smallness condition in terms of the Sobolev norms of $\psi_0$:
\[
e^{C_0\|\nabla\psi_0\|_{L^2_x}^2}
\left(1+\|\nabla\psi_0\|_{H^1_x}^2\right)
\le
\eps_0\frac{e^{M_0-\delta}}{(M_0-\delta)^{2 n}},
\]
for a suitable constant $C_0>0$, which is precisely the smallness assumption in Theorem \ref{thm:NLS}. 
Moreover, when $M_0$ is large, this condition allows both the initial total energy $E_0$ and the initial value $F(0)$ to be large.
\end{proof}

We are now ready to prove the hydrodynamic well-posedness result, Theorem \ref{thm:glob2}.

\begin{proof}[Proof of Theorem \ref{thm:glob2}]
Let the initial data $(\rho_0,J_0)$ be as in Theorem \ref{thm:glob2}. Using the lifting construction developed in Proposition \ref{prop:lift2}, we define a wave function $\psi_0\in H^2_x(\T^2)$ associated with the initial hydrodynamic data $(\rho_0,v_0)$ with $v_0=J_0/\rho_0$. Moreover,
\[
\inf_x|\psi_0|=\inf_x\sqrt\rho_0\geq \delta^\frac12,
\]
and
\[
\|\nabla\psi_0\|_{H^1_x} \leq C_*(M_0,\delta).
\]
Thus by setting $\psi_0$ as initial data and applying Theorem \ref{thm:NLS}, we solve the Cauchy problem for the Schr\"odinger--Langevin equation \eqref{eq:cauchy_NLS} to obtain a global solution $\psi\in X(T)$ for any $0<T<\infty$, where $X(t)$ is defined by \eqref{eq:spaceX}. 

Now we define the hydrodynamic variables associated with $\psi$ as 
\[
\rho=|\psi|^2,\quad J=\rho v=\IM(\bar\psi\nabla\psi),
\]
and prove $(\rho, J)$ is a weak solution to \eqref{eq:QHD} in the sense of Definition \ref{def:GCPsln}. Using \eqref{eq:cauchy_NLS}, direct computation shows
\[
\d_t\rho=2\RE(\bar\psi\d_t\psi)=-\IM(\bar\psi\triangle\psi)=-\diver\IM(\bar\psi\nabla\psi)=-\diver J,
\]
namely the continuity equation holds. To prove the momentum equation, we again need to use the standard mollifiers $\{\chi_\eps\}_{\eps>0}$ and define 
\[
\psi_\eps=\psi\ast\chi_\eps,\quad J_\eps=\IM(\bar\psi_\eps\nabla\psi_\eps).
\]
Again we define the potential $W=f'(\rho)+V$. Then $J_\eps$ satisfies the equation
\begin{align*}
\d_tJ_\eps=&\IM(\d_t\bar\psi_\eps\nabla\psi_\eps)+\IM(\bar\psi_\eps\nabla\d_t\psi_\eps)\\
=& -\frac12\RE(\nabla\bar\psi_\eps\triangle\psi_\eps)+\RE[(W\bar\psi+\tau^{-1}S\bar\psi)_\eps\nabla\psi_\eps]\\
&+\frac12\RE(\bar\psi_\eps\nabla\triangle\psi_\eps)-\RE[\bar\psi_\eps\nabla(W\psi+\tau^{-1}S\psi)_\eps].
\end{align*}
Take $\zeta\in\mathcal C^\infty_0([0, T)\times\T^2;\R^2)$ to be an arbitrary test function. Then, by integrating by parts, we have
\begin{align*}
\int_0^T\int_{\T^2} J_\eps \cdot\d_t\zeta dxdt=&-\int_{\T^2} J_\eps(0)\cdot\zeta(0)dx-\frac12\int_0^T\int_{\T^2}\RE(\nabla\bar\psi_\eps\otimes\nabla\psi_\eps):\nabla\zeta dxdt\\
&+\frac12\int_0^T\int_{\T^2}\RE(\bar\psi_\eps\nabla^2\psi_\eps):\nabla \zeta\, dxdt\\
&-\int_0^T\int_{\T^2}\zeta\cdot\RE[(W\bar\psi+\tau^{-1}S\bar\psi)_\eps\nabla\psi_\eps] dxdt\\
&+\int_0^T\int_{\T^2} \zeta\cdot\RE[\bar\psi_\eps\nabla(W\psi+\tau^{-1}S\psi)_\eps]dxdt.
\end{align*}
Since
\[
\psi\in L^\infty_tH^2_x,
\quad
S,V,W\in L^\infty_tH^2_x,
\]
all mollified quantities converge strongly in the corresponding Sobolev spaces; hence, we may pass to the limit \(\eps\to0\). It follows that
\begin{align*}
\int_0^T\int_{\T^2} J\cdot\d_t\zeta dxdt=&-\int_{\T^2} J_0\cdot\zeta(0)dx-\frac12\int_0^T\int_{\T^2}\RE(\nabla\bar\psi\otimes\nabla\psi):\nabla\zeta\, dxdt\\
&+\frac12\int_0^T\int_{\T^2}\RE(\bar\psi\nabla^2\psi):\nabla\zeta\, dxdt\\
&-\int_0^T\int_{\T^2} \zeta\cdot\RE[(W\bar\psi+\tau^{-1}S\bar\psi)\nabla\psi] dxdt\\
&+\int_0^T\int_{\T^2} \zeta\cdot\RE[\bar\psi\nabla(W\psi+\tau^{-1}S\psi)]dxdt.
\end{align*}
We further compute
\[
\RE(\bar\psi\nabla^2\psi)=\frac12\nabla^2\rho-\RE(\nabla\bar\psi\otimes\nabla\psi),
\]
and
\begin{align*}
-\RE[(W\bar\psi+\tau^{-1}S\bar\psi)\nabla\psi]+\RE[&\bar\psi\nabla(W\psi+\tau^{-1}S\psi)]\\
=&\rho\nabla [f'(\rho)+V]+\tau^{-1}\rho\nabla S\\
=&\nabla p(\rho)+\rho\nabla V+\tau^{-1}J,
\end{align*}
where in the last identity we have used 
\[
p(\rho)=f'(\rho)\rho-f(\rho),\quad \nabla S=v.
\]
Thus we obtain
\begin{align*}
\int_0^T\int_{\T^2} J \cdot\d_t\zeta dxdt=&-\int_{\T^2} J_0\cdot\zeta(0)dx-\int_0^T\int_{\T^2}\RE(\nabla\bar\psi\otimes\nabla\psi):\nabla\zeta\, dxdt\\
&+\frac14\int_0^T\int_{\T^2}\nabla^2\rho:\nabla \zeta\, dxdt-\int_0^T\int_{\T^2} \zeta \cdot(\nabla p(\rho)+\rho\nabla V+\tau^{-1}J)dxdt,
\end{align*}
then by using the polar factorization \eqref{eq:assoc},
\[
\RE(\nabla\bar\psi\otimes\nabla\psi)=\nabla\sqrt\rho\otimes\nabla\sqrt\rho+\rho v\otimes v,
\]
we prove the momentum equation. Moreover, by Proposition \ref{prop:3.2} and Proposition \ref{prop:bdI}, we see that $(\rho,v)$ is a GCP solution as in Definition \ref{def:GCPsln} and satisfies the properties in Theorem \ref{thm:glob2}.

Finally, to prove uniqueness, let $(\rho_1, v_1)$ be an arbitrary GCP solution to \eqref{eq:QHD} on $[0,T]$ satisfying the properties of Theorem \ref{thm:glob2}. Then, by the wave function lifting result in Proposition \ref{prop:lift2}, there exists $\psi_1\in X(T)$ associated with $(\rho_1, v_1)$, and $\psi_1$ solves the NLS equation \eqref{eq:cauchy_NLS}. Since $(\rho_1,v_1)$ and $(\rho,v)$ have the same initial data, the corresponding lifted wave functions have the same modulus and velocity at $t=0$. We choose the same constant $S_*$ in the lifting construction of Proposition \ref{prop:lift2}; then the lifted initial wave function associated with $(\rho_1,v_1)$ coincides with $\psi_0$. By the uniqueness result for the Schr\"odinger--Langevin equation \eqref{eq:cauchy_NLS}, we obtain $\psi_1=\psi$. Consequently, the associated hydrodynamic variables coincide, and the uniqueness of GCP solutions to the Cauchy problem for \eqref{eq:QHD} follows.

\end{proof}

\section{Rescaling of the QHD system and the relaxation-time limit}\label{sect:rs}

The focus of this section is the rescaled QHD system \eqref{eq:QHD_rs_intro} and its relaxation-time limit. Recall the scaling
\begin{equation}\label{eq:rs}
t'=\tau t,\quad (\rho_\tau,v_\tau)(t',x)=\left(\rho,\frac{1}{\tau}v\right)\left(\frac{t'}{\tau},x\right).
\end{equation}
Then system \eqref{eq:QHD} can be rewritten as
\begin{equation}\label{eq:QHD_rs}
\left\{\begin{aligned}
&\d_{t'}\rho_\tau+\diver (\rho_\tau v_\tau)=0\\
&\tau^2\d_{t'} (\rho_\tau v_\tau)+\tau^2\diver(\rho_\tau v_\tau\otimes v_\tau)+\nabla p(\rho_\tau)+\rho_\tau\nabla V_\tau=\frac{1}{2}\rho_\tau\nabla\left(\frac{\triangle\sqrt\rho_\tau}{\sqrt\rho_\tau}\right)-\rho_\tau v_\tau\\
&-\triangle V_\tau=\rho_\tau-\mathcal{C}(x),\quad (\rho_\tau,v_\tau)(0,x)=(\rho_0,v_{\tau,0})(x).
\end{aligned}\right.
\end{equation}
and we focus on the case $\mathcal{C}(x)=M_0$. 
For simplicity of notation, in the remainder of this paper we still use $t$ to denote the rescaled time.

After the scaling, the total mass and energy functionals introduced in Section \ref{sect:def} are reformulated in the new coordinates as
\begin{equation}\label{eq:M_rs}
M_\tau(t)=\int_{\T^2} \rho_\tau(t)dx\equiv M_0,
\end{equation}
and
\begin{equation}
E_\tau(t)=\int_{\T^2} e_\tau(t)dx,
\end{equation}
where the rescaled energy density is given by
\begin{equation}\label{eq:endens_rs}
e_\tau=\frac{\tau^2}{2}\rho_\tau |v_\tau|^2+\frac12|\nabla\sqrt{\rho_\tau}|^2+f(\rho_\tau)+\frac12|\nabla V_\tau|^2.
\end{equation}
Also, we have the rescaled chemical potential
\begin{equation}\label{eq:chem_rs}
\mu_\tau=-\frac{\triangle\sqrt\rho_\tau}{2\sqrt\rho_\tau}+\frac{\tau^2}{2}|v_\tau|^2+f'(\rho_\tau)+V_\tau,\quad \sigma_\tau=\d_t\log\sqrt{\rho_\tau}=
-\frac{\diver(\rho_\tau v_\tau)}
{2\rho_\tau}.
\end{equation}
Since the original quantity $\sigma$ and the rescaled quantity $\sigma_\tau$ satisfy
\[
\sigma=\tau\sigma_\tau,
\]
the rescaled GCP quantities are given by
\begin{equation}\label{eq:higher_rs}
I_\tau(t)=\int_{\T^2}\frac12\rho_\tau(\mu_\tau^2+\tau^2\sigma_\tau^2)dx.
\end{equation}
Last, the physical entropy in the rescaled coordinates is defined as
\begin{equation}\label{eq:entr_1}
H(\rho_\tau)=\int_{\T^2}\rho_\tau\log\left(\frac{\rho_\tau}{M_0}\right)dx.
\end{equation}

We now present the estimates satisfied by the rescaled functionals. They are the direct rescaled counterparts of the estimates obtained in Section \ref{sect:exist2d}, and we record them here for completeness.

\begin{prop}\label{prop:rs_est}
\begin{itemize}
\item[(1)] $E_\tau(t)$ satisfies the  rescaled energy balance law
\begin{equation}\label{eq:en_rs}
E_\tau(t)+\int_0^t\int_{\T^2} \rho_\tau |v_\tau|^2dxds=E_0.
\end{equation}

\item[(2)]  The rescaled GCP functional $I_\tau(t)$ satisfies
\begin{equation}\label{eq:ineqI_rs}
\begin{aligned}
\frac{d}{dt} & I_\tau(t)+\frac{1}{4}\int_{\T^2}\rho_\tau\sigma_\tau^2dx+\frac{\tau^2}{2}\int_{\T^2}\rho_\tau |v_\tau|^4 dx \\
\leq & C_0g_3(M_0,E_0,\delta)\int_{\T^2}\rho_\tau\mu_\tau^2dx+C_0\delta^{-1}E_0[M_0+|f'(\delta)|+I_\tau(t)]\int_{\T^2} \rho_\tau |v_\tau|^2dx,
\end{aligned}
\end{equation}
where $g_3(M_0,E_0,\delta)$ is given by \eqref{eq:g_2d}.

\item[(3)] The time derivative of $H(\rho_\tau)$ satisfies
\begin{equation}\label{eq:ineq_entr_rs}
\begin{aligned}
\frac{d}{dt}&\left[H(\rho_\tau)+\tau^2\int_{\T^2}\log\rho_\tau\d_t\rho_\tau\,dx\right]
+\frac{1}{2}\int_{\T^2}\rho_\tau|\nabla^2\log\sqrt\rho_\tau|^2\,dx\\
&+4\int_{\T^2} p'(\rho_\tau)|\nabla\sqrt\rho_\tau|^2\,dx
+\int_{\T^2}(\rho_\tau-M_0)^2\,dx\\
&\leq4\tau^2\int_{\T^2}\rho_\tau\sigma_\tau^2\,dx
+2\tau^4\int_{\T^2} \rho_\tau |v_\tau|^4\,dx.
\end{aligned}
\end{equation}
\end{itemize} 
\end{prop}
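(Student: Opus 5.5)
The plan is to derive all three estimates from their unscaled counterparts: \eqref{eq:en_disp} in Proposition \ref{prop:3.2}, \eqref{eq:ineqI} in Proposition \ref{prop:bdI}, and \eqref{eq:ineq_entr} in Proposition \ref{prop:dtentr}. We substitute the scaling \eqref{eq:rs} and track the powers of $\tau$. Write $s=t'/\tau$ for the original time. Then $\rho(s)=\rho_\tau(t')$, $v(s)=\tau v_\tau(t')$, $\d_s=\tau\d_{t'}$ and $\sigma=\tau\sigma_\tau$. For the energy and the chemical potential, $\frac12\rho|v|^2=\frac{\tau^2}{2}\rho_\tau|v_\tau|^2$, so $E(s)=E_\tau(t')$ and $\mu(s)=\mu_\tau(t')$ by \eqref{eq:chem_rs}. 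Hence $I(s)=I_\tau(t')$ by \eqref{eq:higher_rs}. All pointwise hypotheses ($\rho\geq\delta$, membership in $X(T^*)$) are invariant under the scaling, so the constants $C_0$ and $g_3$ do not change.

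For (1), \eqref{eq:en_disp} gives $E(s)+\tau^{-1}\int_0^s\int\rho|v|^2=E_0$. We change variables $s'=\tau^{-1}r$, so that $ds'=\tau^{-1}dr$, and use $\rho|v|^2=\tau^2\rho_\tau|v_\tau|^2$. The factor becomes $\tau^{-1}\cdot\tau^2\cdot\tau^{-1}=1$, which yields \eqref{eq:en_rs}.

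For (2), we write $\frac{d}{ds}I=\tau\frac{d}{dt'}I_\tau$ in \eqref{eq:ineqI} and translate each term:
\begin{itemize}
\item $\tau^{-1}\rho\sigma^2=\tau\rho_\tau\sigma_\tau^2$;
\item $\tau^{-1}\rho|v|^4=\tau^3\rho_\tau|v_\tau|^4$;
\item $\tau\rho\mu^2=\tau\rho_\tau\mu_\tau^2$;
\item $\tau^{-1}\rho|v|^2=\tau\rho_\tau|v_\tau|^2$.
\end{itemize}
Dividing by $\tau$ gives \eqref{eq:ineqI_rs}, with coefficients $\frac14$, $\frac{\tau^2}{2}$ and $C_0g_3$. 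This works because every term carries exactly one overall factor of $\tau$.

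For (3), we rescale \eqref{eq:ineq_entr} in the same way. The left side contains $\frac{d}{ds}[H+\tau\int\log\rho\,\d_s\rho]$, and $\tau\,\d_s\rho=\tau^2\d_{t'}\rho_\tau$, so this is $\tau\frac{d}{dt'}[H+\tau^2\int\log\rho_\tau\d_{t'}\rho_\tau]$. The spatial dissipation terms keep their single factor $\tau$. On the right, $4\tau\rho\sigma^2=4\tau^3\rho_\tau\sigma_\tau^2$ and $2\tau\rho|v|^4=2\tau^5\rho_\tau|v_\tau|^4$. Dividing by $\tau$ gives \eqref{eq:ineq_entr_rs}.

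The only delicate point is the rigor of the differentiation identities. They hold for almost every time, so they transfer under the affine change of time variable. The hypotheses of the unscaled propositions (GCP solution, $\inf\rho\geq\delta$, the bounds defining $X(T^*)$) are scale-invariant, so no extra work is needed. If we want (2) to hold without assuming smallness of $\tau$, we simply note that Proposition \ref{prop:bdI} does not use $\tau\leq\tau^*$. The main step to check is therefore the bookkeeping of powers of $\tau$ in each term, which is the part most prone to error.
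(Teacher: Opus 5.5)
Your proposal is correct and follows the paper's route exactly. The paper records Proposition~\ref{prop:rs_est} as the direct rescaled counterpart of \eqref{eq:en_disp}, \eqref{eq:ineqI} and \eqref{eq:ineq_entr}, with no further detail. Your bookkeeping ($v=\tau v_\tau$, $\sigma=\tau\sigma_\tau$, $\mu=\mu_\tau$, $E=E_\tau$, $I=I_\tau$, $\d_s=\tau\d_{t'}$, then dividing by the common factor $\tau$) checks out term by term. Your remark that Propositions~\ref{prop:bdI} and~\ref{prop:dtentr} use no smallness of $\tau$ is also accurate.
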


Using the rescaled formulas \eqref{eq:en_rs}, \eqref{eq:ineqI_rs}, and \eqref{eq:ineq_entr_rs}, we obtain the corresponding estimates for the combined functional
\[
F_\tau(t)=H(\rho_\tau)+ E_\tau(t)+c_1 I_\tau(t),
\]
where $c_1>0$ is chosen as in Proposition \ref{prop:F_2d}, after rewriting the estimates in the rescaled variables. Finally, under the assumptions of Theorem \ref{thm:glob2}, the functional $I_\tau(t)$ remains uniformly bounded for all $0<\tau\leq\tau^*$.

\subsection{The relaxation-time limit of the rescaled QHD system for GCP solutions}

In this section, we will rigorously prove the relaxation-time limit as $\tau\to 0$ in the framework of GCP solutions with positive density. Moreover, an explicit convergence rate is obtained under the assumption that the functional $I_\tau(t)$ is uniformly bounded, and no additional smallness assumption is required for the relaxation-time limit, beyond the uniform bounds imposed below.

Assume that $\{(\rho_\tau,v_\tau)\}_{\tau\in\mathcal I}$ is a sequence of GCP solutions to the rescaled QHD system \eqref{eq:QHD_rs} satisfying the following conditions on $[0,T)\times\T^2$:
\begin{itemize}
\item[(1)] $\inf_{t,x}\rho_\tau\geq \delta>0$;
\item[(2)] for all $t\in[0,T)$, $(\rho_\tau,v_\tau)$ satisfies the uniform mass-energy bounds
\begin{equation}\label{eq:bd_me_rxl}
M_\tau(t)=M_0,\quad E_\tau(t)+\int_0^t\int_{\T^2} \rho_\tau |v_\tau|^2dxds\leq E_0,
\end{equation}
and the uniform higher-order energy bounds:
\begin{equation}\label{eq:bd_I_rxl}
I_\tau(t)+\int_0^t\int_{\T^2} \rho_\tau\sigma_\tau^2 dxds+\tau^2\int_0^t\int_{\T^2} \rho_\tau |v_\tau|^4 dxds\leq C(\delta,M_0,E_0,I_0),
\end{equation}
where $M_\tau(t)$, $E_\tau(t)$ and $I_\tau(t)$ are rescaled functionals given in the first part of Section \ref{sect:rs}.
\end{itemize}

We first prove the following Proposition concerning the weak relaxation-time limit.

\begin{prop}
Let $\{(\rho_\tau,v_\tau)\}_{\tau\in\mathcal{I}}$ be a sequence of GCP solutions satisfying conditions $(1)$ and $(2)$, and set $J_\tau=\rho_\tau v_\tau$. Then there exists a subsequence, denoted by $\{(\rho_{\tau_n},J_{\tau_n})\}$, and a limiting density $\bar\rho$ and momentum density $\bar J$ such that the relaxation-time limit holds in the sense
\begin{equation}\label{eq:weaklim}
\sqrt\rho_{\tau_n}\rightharpoonup\sqrt{\bar\rho}\quad\textrm{in }L^2_{t,loc}H^2_x,\quad J_{\tau_n}\rightharpoonup \bar J\quad\textrm{in }L^2_{t,loc}L^2_x,
\end{equation}
where $\bar\rho$ is a weak solution of the QDD equation \eqref{eq:qdde} in the sense of Definition \ref{def:qdde_ws}, and $\bar J$ satisfies the consistency relation
\begin{equation}\label{eq:cons_J}
\bar J=\frac14\diver(\bar\rho\nabla^2\log\bar\rho)-\nabla p(\bar\rho)-\bar\rho \nabla\bar V.
\end{equation}
\end{prop}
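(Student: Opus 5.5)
The plan is to extract uniform-in-$\tau$ bounds, obtain compactness for $\sqrt{\rho_\tau}$ by an Aubin--Lions argument, and then pass to the limit term by term in the weak formulation of \eqref{eq:QHD_rs}.

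\emph{Uniform bounds.} Integrate the rescaled entropy inequality \eqref{eq:ineq_entr_rs} in time. The correction term is harmless: $\tau^2|\int\log\rho_\tau\d_t\rho_\tau dx|\le 2\tau^2\|\nabla\sqrt{\rho_\tau}\|_{L^2}\|\sqrt{\rho_\tau}v_\tau\|_{L^2}$, which is bounded by $\tau E_0$ since $\tau^2\|\sqrt{\rho_\tau} v_\tau\|_{L^2}^2\le 2E_0$. The right-hand side is $O(\tau^2)$ by \eqref{eq:bd_I_rxl}. Lemma \ref{lem:H^2log} then gives
\[
\int_0^T\int_{\T^2}|\nabla^2\sqrt{\rho_\tau}|^2+|\nabla\rho_\tau^{1/4}|^4\,dxdt\le C(T).
\]
Together with $\|\sqrt{\rho_\tau}\|_{L^\infty_tH^1_x}\le C$ from the energy, this yields $\sqrt{\rho_\tau}$ bounded in $L^2_tH^2_x$. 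Moreover, $\d_t\sqrt{\rho_\tau}=\sqrt{\rho_\tau}\sigma_\tau$ is bounded in $L^2_{t,x}$ by \eqref{eq:bd_I_rxl}. The energy dissipation in \eqref{eq:bd_me_rxl} gives $\sqrt{\rho_\tau}v_\tau$ bounded in $L^2_{t,x}$, so $J_\tau=\sqrt{\rho_\tau}\cdot\sqrt{\rho_\tau}v_\tau$ is bounded in $L^2_{t,x}$, using $\|\rho_\tau\|_{L^\infty}$ bounded via the $L^\infty_tH^1_x$ bound of $\sqrt{\rho_\tau}$ on $\T^2$ and the Sobolev/Agmon embedding.

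\emph{Compactness.} By Aubin--Lions, $\sqrt{\rho_{\tau_n}}\to\sqrt{\bar\rho}$ strongly in $L^2_tH^1_x$ and in $C_tL^2_x$, weakly in $L^2_tH^2_x$, and $J_{\tau_n}\rightharpoonup\bar J$ in $L^2_{t,x}$. The lower bound $\rho_\tau\ge\delta$ passes to $\bar\rho$, and $\bar\rho(0)=\rho_0$ from the $C_tL^2_x$ convergence. Then $\rho_{\tau_n}\to\bar\rho$ strongly, $V_{\tau_n}\to\bar V$ in $L^2_tH^2_x$ by elliptic regularity, and $p(\rho_{\tau_n})\to p(\bar\rho)$ strongly, since $\rho_\tau$ is uniformly bounded and $p$ is a polynomial.

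\emph{Passage to the limit.} This is the main point. The continuity equation passes directly. In the momentum equation, write the quantum term as in \eqref{eq:bohm}:
\[
\tfrac14\nabla\triangle\rho_\tau-\diver(\nabla\sqrt{\rho_\tau}\otimes\nabla\sqrt{\rho_\tau}).
\]
The quadratic term converges strongly in $L^1_{t,x}$ because $\nabla\sqrt{\rho_\tau}\to\nabla\sqrt{\bar\rho}$ strongly in $L^2_{t,x}$; this is precisely where the $H^2$ bound from the entropy is indispensable. The term $\rho_\tau\nabla V_\tau$ converges as a product of a strongly convergent factor with a bounded one.

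The inertial terms vanish in the limit:
\begin{itemize}
\item $\tau^2\d_t J_\tau\to0$ in distributions, since $J_\tau$ is bounded in $L^2_{t,x}$;
\item $\tau^2\diver(\rho_\tau v_\tau\otimes v_\tau)\to0$, since $\tau^2\|\rho_\tau v_\tau\otimes v_\tau\|_{L^1}\le\tau^2\|\sqrt{\rho_\tau} v_\tau\|_{L^2_{t,x}}^2\le\tau^2E_0$.
\end{itemize}
Hence $\bar J$ satisfies \eqref{eq:cons_J} in $\mathcal D'$. Substituting into the limit continuity equation $\d_t\bar\rho+\diver\bar J=0$ gives \eqref{eq:qdde} in distributions, with $\sqrt{\bar\rho}\in L^2_{t,loc}H^1_x$ as Definition \ref{def:qdde_ws} requires.
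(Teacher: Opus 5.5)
Your route is the paper's: integrate \eqref{eq:ineq_entr_rs}, use Lemma \ref{lem:H^2log} to get $\sqrt{\rho_\tau}$ bounded in $L^2_tH^2_x$, apply Aubin--Lions with $\d_t\sqrt{\rho_\tau}=\sqrt{\rho_\tau}\sigma_\tau\in L^2_{t,x}$, and pass to the limit in the weak formulation with the inertial terms of size $O(\tau^2)$. Your energy-only bound $O(\tau E_0)$ for the correction term $\tau^2\int\log\rho_\tau\,\d_t\rho_\tau\,dx$ is fine.

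\textbf{The step that fails: the uniform upper bound on $\rho_\tau$.} You obtain $\sup_{t,x}\rho_\tau\le C$ ``via the $L^\infty_tH^1_x$ bound of $\sqrt{\rho_\tau}$ and the Sobolev/Agmon embedding.'' On $\T^2$ the space $H^1$ does not embed into $L^\infty$. Agmon's inequality $\|u\|_{L^\infty}^2\le C\|u\|_{L^2}\|u\|_{H^2}$ needs $H^2$. Fed with what you have actually proved, namely $\sqrt{\rho_\tau}\in L^2_tH^2_x$, it gives only $\|\rho_\tau\|_{L^\infty_x}\in L^2_t$ (or $L^p_t$ for every $p<\infty$ via Lemma \ref{lem:logsob}).

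Consequently $J_\tau=\sqrt{\rho_\tau}\cdot\sqrt{\rho_\tau}v_\tau$ is bounded only in $L^q_tL^2_x$ for $q<2$, or in $L^2_tL^r_x$ for $r<2$ (using $H^1\subset L^s$ for all $s<\infty$). It is not bounded in $L^2_{t,x}$, so the convergence $J_{\tau_n}\rightharpoonup\bar J$ in $L^2_{t,loc}L^2_x$ asserted in \eqref{eq:weaklim} does not follow. The distributional limit itself survives without an $L^\infty$ bound. So do your pressure and $\rho_\tau\nabla V_\tau$ terms: $\rho_\tau$ is bounded in $L^\infty_tL^s_x$ for all $s<\infty$, and Vitali's theorem applies.

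\textbf{The fix: use hypothesis $(2)$, not only the energy.} In the original variables one has $I_\tau(t')=I(t'/\tau)$ and $E_\tau(t')=E(t'/\tau)$. Hence the uniform GCP bound, together with $\rho_\tau\ge\delta$, controls $\sqrt{\rho_\tau}$ (equivalently the lifted wave function) in $L^\infty_tH^2_x$ uniformly in $\tau$, for instance through \eqref{eq:H2_lift}. Then $H^2(\T^2)\subset L^\infty$, or Lemma \ref{lem:Linfty_2d}, gives $\sup_{t,x}\rho_\tau\le C(\delta,M_0,E_0,I_0)$. This is the ``uniform lower and upper bounds on $\rho_\tau$'' that the paper invokes. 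With this bound inserted, the rest of your argument goes through as written.
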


\begin{proof}
The assumptions above imply that $(\rho_\tau,v_\tau)$ satisfies the uniform bounds
\begin{equation}\label{eq:prop29.1}
\|\sqrt\rho_\tau\|_{L^\infty_tH^1_x}\leq C(M_0,E_0),\quad \|\sqrt\rho_\tau v_\tau\|_{L^2_{t,x}}\leq E_0^\frac12
\end{equation}
on $[0,T)\times \T^2$.
The endpoint contribution of the correction term is bounded by $C\tau^2$ thanks to the uniform lower and upper bounds on $\rho_\tau$ and the estimate \eqref{eq:bd_I_rxl}. Therefore, by \eqref{eq:ineq_entr_rs} and Lemma \ref{lem:H^2log} we have
\begin{equation}\label{eq:7-1}
\begin{aligned}
H(\rho_\tau)(t)&+\int_0^t\int_{\T^2}|\nabla^2\sqrt\rho_\tau|^2+|\nabla\rho_\tau^\frac14|^4dxds\\
&+\int_0^t\int_{\T^2} p'(\rho_\tau)|\nabla\sqrt\rho_\tau|^2dxds\leq H(\rho_0)+C(M_0,E_0,I_0,\delta)\tau^2.
\end{aligned}
\end{equation}
Since our internal energy $f(\rho)=(2n)^{-1}(\rho-M_0)^{2n}$ is convex, we have
\[
p'(\rho)=\rho f''(\rho)\geq 0.
\]
For the initial entropy $H(\rho_0)$, we have
\[
H(\rho_0)=\int_{\T^2} \rho_0\log\left(\frac{\rho_0}{M_0}\right)dx\leq C(M_0,E_0,\delta).
\]
Hence, on $[0,T)\times\T^2$, we obtain
\[
\|\nabla^2\sqrt\rho_\tau\|_{L^2_{t,x}}\leq C(M_0,E_0,\delta)+C(M_0,E_0,I_0,\delta)\tau.
\]
Therefore we can choose a converging subsequence $\{(\rho_{\tau_n},v_{\tau_n})\}$, $\tau_n\to 0$, and a limiting function $\xi\geq 0$, such that
\[
\sqrt\rho_{\tau_n}\rightharpoonup\xi\quad\textrm{in }L^2_{t,loc}H^2_x.
\]
Moreover, since $\d_t\sqrt{\rho_\tau}=\sqrt{\rho_\tau}\sigma_\tau$ is uniformly bounded in $L^2_{t,x}$, the Aubin--Lions compactness lemma yields the strong convergence
\[
\sqrt\rho_{\tau_n}\to\xi\quad\textrm{in }L^2_{t,loc}H^1_x.
\]
This implies that $\rho_{\tau_n}\to \xi^2$, and we can therefore write $\xi=\sqrt{\bar\rho}$. Now let $\eta_1\in\mathcal C^\infty_0([0, T)\times\T^2)$ and $\eta_2\in\mathcal C^\infty_0([0, T)\times\T^2;\R^2)$ be test functions. The weak formulation of the rescaled QHD system \eqref{eq:QHD_rs} reads
\[
\int_0^T\int_{\T^2}\rho_{\tau_n}\d_t\eta_1+J_{\tau_n}\cdot\nabla\eta_1\,dxdt+\int_{\T^2}\rho_0(x)\eta_1(0, x)\,dx=0;
\]
and
\begin{align*}
\int_0^T\int_{\T^2}\tau_n^2J_{\tau_n}\cdot\d_t\eta_2&+(\tau_n^2\Lambda_{\tau_n}\otimes\Lambda_{\tau_n}+p(\rho_{\tau_n})I_d+\nabla\sqrt\rho_{\tau_n}\otimes\nabla\sqrt\rho_{\tau_n}):\nabla\eta_2
\\
&-\eta_2\cdot\rho_{\tau_n}\nabla V_{\tau_n}-\frac{1}{4}\rho_{\tau_n}\triangle\diver\eta_2-J_{\tau_n}\cdot\eta_2\,dxdt+\tau_n\int_{\T^2}J_{0}(x)\cdot\eta_2(0, x)\,dx=0,
\end{align*}
where $J_{\tau_n}=\sqrt\rho_{\tau_n}\Lambda_{\tau_n}=\rho_{\tau_n} v_{\tau_n}$. 
The bound \eqref{eq:prop29.1} on $\sqrt\rho_{\tau_n}v_{\tau_n}$ and the strong convergence of $\sqrt\rho_{\tau_n}$ imply the convergence 
\[
J_{\tau_n}=\rho_{\tau_n} v_{\tau_n}\rightharpoonup \bar J\quad \textrm{in }L^2_{t,loc}L^2_x,\quad V_{\tau_n}\to\bar V\quad \textrm{in }L^2_{t,loc}H^1_x.
\]
Thus we can pass to the limit ${\tau_n}\to 0$ to obtain
\begin{equation}\label{eq:weak_QDD_1}
\int_0^T\int_{\T^2}\bar\rho\d_t\eta_1+\bar J\cdot\nabla\eta_1 dxdt+\int_{\T^2} \rho_0\eta_1(0)dx=0
\end{equation}
and
\begin{equation}\label{eq:weak_QDD2}
\int_0^T\int_{\T^2}\bar J\cdot\eta_2dxdt=\int_0^T\int_{\T^2} [\nabla\sqrt{\bar\rho}\otimes\nabla\sqrt{\bar\rho}+p(\bar\rho)I_d]:\nabla\eta_2-\bar\rho\nabla\bar V\cdot\eta_2-\frac14\bar\rho\triangle\diver\eta_2 dxdt.
\end{equation}
Since $\eta_2$ is an arbitrary test vector function, \eqref{eq:weak_QDD2} implies \eqref{eq:cons_J}. Last, by choosing $\eta_2=\nabla\eta_1$ in \eqref{eq:weak_QDD2} and substituting it into \eqref{eq:weak_QDD_1}, we obtain the weak formulation of equation \eqref{eq:qdde},
\begin{align*}
\int_0^T\int_{\T^2}&\bar\rho\d_t\eta_1+[\nabla\sqrt{\bar\rho}\otimes\nabla\sqrt{\bar\rho}+p(\bar\rho)I_d]:\nabla^2\eta_1\\
&-\bar\rho\nabla\bar V\cdot\nabla\eta_1-\frac14\bar\rho\triangle^2\eta_1 dxdt+\int_{\T^2} \rho_0\eta_1(0)dx=0,
\end{align*}
as in Definition \ref{def:qdde_ws}.
\end{proof}

As a consequence of the weak relaxation-time limit, the solution $\bar\rho$ of the QDD equation \eqref{eq:qdde} obtained above satisfies the following properties.

\begin{prop}\label{prop:barrho}
The limiting density $\bar\rho$ has the lower bound $\inf\bar\rho\geq\delta>0$, and it satisfies the following bounds:
\begin{equation}\label{eq:bd_barrho_1}
\|\sqrt{\bar\rho}\|_{L^\infty_tH^1_x}\leq C(M_0,E_0),
\end{equation}
and 
\begin{equation}\label{eq:bd_barrho_2}
\|\sqrt{\bar\rho}\|_{L^2_{t}H^4_x}+\|\nabla\bar\rho^\frac14\|_{L^4_{t,x}}\leq C(\delta,M_0,E_0,I_0).
\end{equation}
\end{prop}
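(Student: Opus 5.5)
The plan is to get every bound by passing to the limit in the uniform estimates \eqref{eq:bd_me_rxl}, \eqref{eq:bd_I_rxl} and \eqref{eq:7-1} along the subsequence of the previous Proposition. The fourth-order bound \eqref{eq:bd_barrho_2} does not follow from semicontinuity alone; for it I will use the consistency relation \eqref{eq:cons_J} and elliptic bootstrapping.

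\textbf{Lower bound and first-order bounds.} The lower bound is immediate. Since $\sqrt\rho_{\tau_n}\to\sqrt{\bar\rho}$ strongly in $L^2_{t,loc}H^1_x$, a further subsequence converges a.e. in $(t,x)$. Hence $\bar\rho\geq\delta$ a.e. follows from $\inf\rho_{\tau}\geq\delta$. For \eqref{eq:bd_barrho_1}, I use the uniform bound $\|\sqrt\rho_\tau\|_{L^\infty_tH^1_x}\le C(M_0,E_0)$ from \eqref{eq:prop29.1}. Weak-$*$ lower semicontinuity in $L^\infty_tH^1_x$ then gives the same bound for $\sqrt{\bar\rho}$. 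Likewise, \eqref{eq:7-1} bounds $\|\nabla\rho_\tau^{1/4}\|_{L^4_{t,x}}$ uniformly in $\tau$. Since $\rho_\tau^{1/4}\to\bar\rho^{1/4}$ strongly, by the lower bound and Lipschitz continuity of $s\mapsto s^{1/2}$ on $[\delta,\infty)$, I get $\nabla\rho_{\tau_n}^{1/4}\rightharpoonup\nabla\bar\rho^{1/4}$ in $L^4_{t,x}$, and lower semicontinuity gives the $L^4$ part of \eqref{eq:bd_barrho_2}.

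\textbf{Second- and third-order bounds.} For the higher regularity I first identify the limiting chemical potential
\[
\bar\mu=-\frac{\triangle\sqrt{\bar\rho}}{2\sqrt{\bar\rho}}+f'(\bar\rho)+\bar V.
\]
The bound $I_\tau\le C$ gives $\sqrt\rho_\tau\mu_\tau$ bounded in $L^\infty_tL^2_x$. The term $\frac{\tau^2}{2}\sqrt{\rho_\tau}|v_\tau|^2$ has $L^2_{t,x}$ norm at most $C\tau$, by the bound on $\tau^2\int\rho_\tau|v_\tau|^4$ in \eqref{eq:bd_I_rxl}, so it vanishes in the limit. Hence $\sqrt{\bar\rho}\bar\mu\in L^\infty_tL^2_x$. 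Combined with $\bar\rho\ge\delta$ and the bound $\|\bar\rho-M_0\|_{L^\infty}\le M_0-\delta$ (obtained the same way), this gives $\triangle\sqrt{\bar\rho}\in L^\infty_tL^2_x$, that is, $\sqrt{\bar\rho}\in L^\infty_tH^2_x$. Next, using \eqref{eq:bohm}, the relation \eqref{eq:cons_J} reads $\bar J=-\bar\rho\nabla\bar\mu$. Since $\bar J\in L^2_{t,x}$ as a weak limit and $\bar\rho\ge\delta$, I obtain $\nabla\bar\mu\in L^2_{t,x}$. Expanding $\nabla\bar\mu$, the top-order term is $\nabla\triangle\sqrt{\bar\rho}/(2\sqrt{\bar\rho})$. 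The remaining terms are products such as $\triangle\sqrt{\bar\rho}\,\nabla\sqrt{\bar\rho}/\bar\rho$, $\nabla f'(\bar\rho)$ and $\nabla\bar V$, all in $L^2_{t,x}$ by the $H^2$ bound, 2D Sobolev embedding and elliptic regularity for $\bar V$. This yields $\sqrt{\bar\rho}\in L^2_tH^3_x$.

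\textbf{Fourth-order bound.} For the last derivative I use the time derivative. The bound on $\int_0^t\int\rho_\tau\sigma_\tau^2$ gives $\d_t\sqrt{\rho_\tau}$ bounded in $L^2_{t,x}$, so $\d_t\bar\rho=-\diver\bar J\in L^2_{t,x}$. Expanding $\diver(\bar\rho\nabla\bar\mu)$,
\[
\bar\rho\,\triangle\bar\mu=-\diver\bar J-\nabla\bar\rho\cdot\nabla\bar\mu .
\]
The product term is controlled by
\[
\|\nabla\bar\rho\cdot\nabla\bar\mu\|_{L^2}\le\|\nabla\bar\rho\|_{L^4}\|\nabla\bar\mu\|_{L^4}\le C\|\nabla\bar\mu\|_{L^2}^{1/2}\|\nabla\bar\mu\|_{H^1}^{1/2},
\]
with the constant supplied by $\sqrt{\bar\rho}\in L^\infty_tH^2_x$. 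Young's inequality then absorbs it into the left-hand side, giving $\triangle\bar\mu\in L^2_{t,x}$. Writing $\triangle\bar\mu$ in terms of $\sqrt{\bar\rho}$, the leading term is $-\triangle^2\sqrt{\bar\rho}/(2\sqrt{\bar\rho})$. The lower-order products involve at most third derivatives of $\sqrt{\bar\rho}$ paired with first and second ones. These are in $L^2_{t,x}$ by the $L^2_tH^3_x\cap L^\infty_tH^2_x$ bounds and the embedding $H^1\subset L^p$ in 2D; the pressure and potential terms are harmless. This gives $\sqrt{\bar\rho}\in L^2_tH^4_x$.

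The main obstacle is this last bootstrap step: it needs careful bookkeeping of the nonlinear products in $\triangle\bar\mu$ to check that each one lies in $L^2_{t,x}$ using only the bounds already obtained. A second delicate point is the rigorous justification of $\bar J=-\bar\rho\nabla\bar\mu$ for the weak solution. I would do this by testing \eqref{eq:weak_QDD2} and using the $H^2$ regularity already established to integrate by parts.
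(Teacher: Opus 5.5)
Your argument is correct in substance and shares the paper's skeleton. Both proofs get the lower bound from a.e.\ convergence, and \eqref{eq:bd_barrho_1} and the $L^4_{t,x}$ bound on $\nabla\bar\rho^{1/4}$ by lower semicontinuity, the latter from \eqref{eq:7-1}. Both get third derivatives from $\bar J\in L^2_{t,x}$ through \eqref{eq:cons_J}, and fourth derivatives from $\d_t\bar\rho\in L^2_{t,x}$ through the limit equation.

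The genuine difference is your intermediate step. You pass to the limit in $\sqrt{\rho_\tau}\mu_\tau$, using the bound on $I_\tau$ and the fact that $\tau^2\sqrt{\rho_\tau}|v_\tau|^2=O(\tau)$ in $L^2_{t,x}$, and obtain $\sqrt{\bar\rho}\in L^\infty_tH^2_x$. You then run the bootstrap on $\bar\mu$ through $\bar J=-\bar\rho\nabla\bar\mu$ and $\d_t\bar\rho=\diver(\bar\rho\nabla\bar\mu)$.

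The paper uses only $\sqrt{\bar\rho}\in L^2_tH^2_x$ from the entropy estimate. It rewrites \eqref{eq:cons_J} and \eqref{eq:qdde} via \eqref{eq:bohm} as equations for $\nabla\triangle\bar\rho$ and $\triangle^2\bar\rho$, and then invokes elliptic estimates. Its displayed bound for $\nabla\triangle\bar\rho$ does not account for the quadratic term $\diver(\nabla\sqrt{\bar\rho}\otimes\nabla\sqrt{\bar\rho})$. That term is not evidently in $L^2_{t,x}$ under $L^2_tH^2_x\cap L^\infty_tH^1_x$ alone. Your uniform-in-time $H^2$ bound is exactly what makes such products tractable. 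It also supplies the upper bound on $\bar\rho$ and the $L^\infty_tL^4_x$ control of $\nabla\log\bar\rho$ that the paper uses later in the relative-entropy argument. The cost is one extra limit identification for $\sqrt{\rho_\tau}\mu_\tau$.

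Two details need tightening.

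First, $\|\bar\rho-M_0\|_{L^\infty}\le M_0-\delta$ is not among the hypotheses (1)--(2) of this section, but you do not need it. The embedding $H^1(\T^2)\subset L^p$ for every $p<\infty$ already places $\sqrt{\bar\rho}\,(f'(\bar\rho)+\bar V)$ in $L^\infty_tL^2_x$. Boundedness of $\bar\rho$ then follows from the $H^2$ bound.

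Second, some products are not in $L^2$ from the bounds you list, since $H^1(\T^2)\not\subset L^\infty$. This applies to $\triangle\sqrt{\bar\rho}\,\nabla\sqrt{\bar\rho}/\bar\rho$ in $\nabla\bar\mu$, and likewise to $\nabla\triangle\sqrt{\bar\rho}\cdot\nabla\sqrt{\bar\rho}$ in $\triangle\bar\mu$. In addition, the Young absorption in your fourth-order step presupposes that $\|\triangle\bar\mu\|_{L^2_{t,x}}$ is already finite, which a weak solution does not give you.

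Both points are handled without absorption by taking $g=\triangle\sqrt{\bar\rho}/\sqrt{\bar\rho}=2(f'(\bar\rho)+\bar V-\bar\mu)$ as the unknown.
\begin{itemize}
\item Third order: $g\in L^\infty_tL^2_x$ with $\nabla g\in L^2_{t,x}$. Then $\triangle\sqrt{\bar\rho}=\sqrt{\bar\rho}\,g\in L^2_tH^1_x$ via $\|ab\|_{H^1}\le C\|a\|_{H^2}\|b\|_{H^1}$ in two dimensions.
\item Fourth order: $\diver(\bar\rho\nabla\bar\mu)=\d_t\bar\rho$ first yields $\triangle\bar\mu\in L^2_tL^{4/3}_x$. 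Hence $\nabla\bar\mu\in L^2_tL^4_x$ by Sobolev embedding, and hence $\triangle\bar\mu\in L^2_{t,x}$. So $g\in L^2_tH^2_x$, and $\sqrt{\bar\rho}\,g\in L^2_tH^2_x$ because $H^2(\T^2)$ is an algebra.
\end{itemize}
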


\begin{proof}
The lower bound for $\bar\rho$ follows from the strong convergence of $\rho_{\tau_n}$ and the uniform lower bound $\rho_{\tau_n}\geq\delta$. By passing to the limit in the energy balance law \eqref{eq:bd_me_rxl}, we obtain \eqref{eq:bd_barrho_1}.

For \eqref{eq:bd_barrho_2}, we first notice that the $L^2_{t,x}$ control of $\nabla^2\sqrt{\bar\rho}$, together with the bound of $\nabla\bar\rho^\frac14$, follows from the rescaled entropy estimate \eqref{eq:7-1}. To obtain the bound of the higher derivative of $\sqrt{\bar\rho}$, we first recall that the consistent momentum density $\bar J$ given by \eqref{eq:cons_J} inherits the $L^2_{t,x}$ bound of $J_\tau$ by the weak limit \eqref{eq:weaklim}, which together with \eqref{eq:bohm} implies
\begin{align*}
\|\nabla\triangle\bar\rho\|_{L^2_{t,x}}\leq & C(\|\bar J\|_{L^2_{t,x}}+\|\nabla p(\bar\rho)\|_{L^2_{t,x}}+\|\bar\rho\nabla \bar V\|_{L^2_{t,x}})\\
\leq & C(\|\bar J\|_{L^2_{t,x}},\|\bar\rho\|_{L^2_{t}H^2_x})\leq C(\delta, M_0,E_0,I_0).
\end{align*}
Then, by standard elliptic estimate and the lower bound of $\bar\rho$, we obtain
\[
\|\sqrt{\bar\rho}\|_{L^2_{t}H^3_x}\leq C(\delta, M_0,E_0,I_0).
\]

Moreover, by the uniform bound of $\d_t\sqrt{\rho_\tau}=\sqrt\rho_\tau\sigma_\tau$, it follows that
\begin{equation}\label{eq:bd_barrho_3}
\|\d_t\sqrt{\bar\rho}\|_{L^2_{t,x}}\leq C(\delta,M_0,E_0,I_0).
\end{equation}
By \eqref{eq:bohm}, we can write the quantum drift-diffusion equation as
\[
\d_t\bar\rho+\frac14\triangle^2\bar\rho+\diver\cdot\diver(\nabla\sqrt{\bar\rho}\otimes\nabla\sqrt{\bar\rho})-\triangle p(\bar\rho)-\diver(\bar\rho\nabla\bar V)=0.
\]
Thus we have
\[
\|\triangle^2\bar\rho\|_{L^2_{t,x}}
\leq C\bigl(\|\d_t\bar\rho\|_{L^2_{t,x}},\|\sqrt{\bar\rho}\|_{L^2_tH^3_x}\bigr)
\leq C(\delta,M_0,E_0,I_0).
\]
By elliptic estimates, this gives $\bar\rho\in L^2_tH^4_x$. Since $\bar\rho\geq\delta>0$, the composition $\sqrt{\bar\rho}$ also belongs to $L^2_tH^4_x$, and therefore \eqref{eq:bd_barrho_2} follows.
\end{proof}

We now prove a relative entropy estimate which, in particular, yields convergence of the whole family $\{\rho_\tau\}$ to the limiting QDD solution $\bar\rho$, and an explicit convergence rate is also obtained. As the main technical tool of this part, we use the method of relative entropy. Recall that the physical entropy $H(\rho)(t)$ is given by
\[
H(\rho)(t)=\int_{\T^2} g(\rho(t))dx,
\]
where $g(s)=s \log(s/M_0)$. For $\rho_\tau$ and the limiting density $\bar\rho$, we define the relative entropy as
\begin{equation}\label{eq:rl_entr}
\begin{aligned}
H(\rho_\tau|\bar\rho)(t)=&H(\rho_\tau)(t)-H(\bar\rho)(t)-\int_{\T^2}  g'(\bar\rho)(\rho_\tau-\bar\rho)dx\\
=&\int_{\T^2}  g(\rho_\tau)-g(\bar\rho)-g'(\bar\rho)(\rho_\tau-\bar\rho)dx,
\end{aligned}
\end{equation} 
where
\[
g'(s)=\log\left(\frac{s}{M_0}\right)+1.
\]
By Taylor's formula, we have
\[
g(\rho_\tau)-g(\bar\rho)-g'(\bar\rho)(\rho_\tau-\bar\rho)=\frac12 g''(\rho_*)(\rho_\tau-\bar\rho)^2
\]
for some $\rho_*\in [\min\{\rho_\tau,\bar\rho\},\max\{\rho_\tau,\bar\rho\}]$, and $g''(s)=s^{-1}$. Moreover, both $\rho_\tau$ and $\bar\rho$ are bounded from below by $\delta>0$ and from above by a constant depending on $(M_0,E_0,I_0,\delta)$. Thus the relative entropy $H(\rho_\tau|\bar\rho)$ is equivalent to the $L^2_x$ norm of $\rho_\tau-\bar\rho$ in the sense
\begin{equation}\label{eq:equiv_entr}
2 \delta H(\rho_\tau|\bar\rho)(t)\leq\|\rho_\tau-\bar\rho\|_{L^2_x}^2(t)\leq 2 C(M_0,E_0,I_0,\delta)H(\rho_\tau|\bar\rho)(t).
\end{equation}

To estimate the relative entropy $H(\rho_\tau|\bar\rho)$, we need the following result, which gives the time derivative of $H(\rho_\tau|\bar\rho)$. Here again, a rigorous computation requires a mollification argument of $\rho_\tau$ as in the proof of Proposition \ref{prop:dtentr}, while the regularity of the limiting density $\bar\rho$ is guaranteed by Proposition \ref{prop:barrho}. Since the method is similar, we omit this step for simplicity.

\begin{prop}\label{prop:6.2}
Let $\{(\rho_\tau,v_\tau)\}$ be a sequence of solutions to \eqref{eq:QHD_rs}, and let $\bar\rho$ be the limiting density obtained above, satisfying the bounds in Proposition \ref{prop:barrho}. For almost every $t\in(0,T)$, the time derivative of $H(\rho_\tau|\bar\rho)$ is given by
\begin{equation}\label{eq:dtrlentr}
\begin{aligned}
\frac{d}{dt}H(\rho_\tau|\bar\rho)=&-\int_{\T^2} \rho_\tau\left|\nabla^2\log\sqrt\rho_\tau-\nabla^2\log\sqrt{\bar\rho}\right|^2dx\\
&+2\int_{\T^2} \rho_\tau\nabla^2\log\sqrt{\bar\rho}:(\nabla\log\sqrt\rho_\tau-\nabla\log\sqrt{\bar\rho})^2dx\\
&-\int_{\T^2} \rho_\tau[p'(\rho_\tau)\nabla\log\rho_\tau-p'(\bar\rho)\nabla\log\bar\rho]\cdot(\nabla\log\rho_\tau-\nabla\log\bar\rho) dx\\
&-\int_{\T^2}(\rho_\tau-\bar\rho)^2dx+\int_{\T^2}(\rho_\tau-\bar\rho)\nabla\log\bar\rho\cdot\nabla(V_\tau-\bar V)dx\\
&+\tau^2[R_1(\rho_\tau)-R_2(\rho_\tau,\bar\rho)],
\end{aligned}
\end{equation}
where we use the shorthand notation $\mathbf w^2=\mathbf w\otimes\mathbf w$ for a vector $\mathbf w$, and the remaining terms are given by
\[
R_1(\rho_\tau)=-\frac{d}{dt}\int_{\T^2}\log\rho_\tau\d_t\rho_\tau dx+4\int_{\T^2}\rho_\tau\sigma_\tau^2 dx+\int_{\T^2}\rho_\tau (v_\tau\otimes v_\tau):\nabla^2\log\rho_\tau dx,
\]
and
\[
R_2(\rho_\tau,\bar\rho)=-\frac{d}{dt}\int_{\T^2}\log\bar\rho\d_t\rho_\tau dx+\int_{\T^2}\d_t\log\bar\rho\d_t\rho_\tau dx+\int_{\T^2}\rho_\tau (v_\tau\otimes v_\tau):\nabla^2\log\bar\rho dx.
\]
\end{prop}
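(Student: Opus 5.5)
The plan is to differentiate the relative entropy directly and then substitute the two equations: the rescaled QHD system \eqref{eq:QHD_rs} for $\rho_\tau$ and the QDD equation \eqref{eq:qdde}, with $\bar J$ given by \eqref{eq:cons_J}, for $\bar\rho$. The identity is first derived for mollified $\rho_\tau$, exactly as in Proposition \ref{prop:dtentr}. The regularity of $\bar\rho$ from Proposition \ref{prop:barrho} ($\sqrt{\bar\rho}\in L^2_tH^4_x$, $\d_t\sqrt{\bar\rho}\in L^2_{t,x}$, $\bar\rho\ge\delta$) makes every pairing with $\log\bar\rho$ and its derivatives up to order four legitimate. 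Since $g''(s)=s^{-1}$, differentiating \eqref{eq:rl_entr} gives
\[
\frac{d}{dt}H(\rho_\tau|\bar\rho)=\int_{\T^2}(\log\rho_\tau-\log\bar\rho)\,\d_t\rho_\tau\,dx-\int_{\T^2}\Big(\frac{\rho_\tau}{\bar\rho}-1\Big)\d_t\bar\rho\,dx .
\]
I treat the three resulting pieces separately.

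First, the term $\int\log\rho_\tau\,\d_t\rho_\tau\,dx=\frac{d}{dt}H(\rho_\tau)$ is the rescaled version of \eqref{eq:dtH}. It produces $-\int\rho_\tau|\nabla^2\log\sqrt{\rho_\tau}|^2$, the pressure term $-\int\rho_\tau p'(\rho_\tau)|\nabla\log\rho_\tau|^2$, the Poisson contribution, and exactly $\tau^2R_1(\rho_\tau)$.

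Second, for $-\int\log\bar\rho\,\d_t\rho_\tau\,dx=-\int\nabla\log\bar\rho\cdot J_\tau\,dx$, I use the rescaled momentum equation in the form
\[
J_\tau=\tfrac14\diver(\rho_\tau\nabla^2\log\rho_\tau)-\nabla p(\rho_\tau)-\rho_\tau\nabla V_\tau-\tau^2\big(\d_tJ_\tau+\diver(\rho_\tau v_\tau\otimes v_\tau)\big).
\]
The $\tau^2$ part, after integrating by parts in $x$ and moving the time derivative off $J_\tau$ (using $\diver J_\tau=-\d_t\rho_\tau$), gives precisely $-\tau^2R_2(\rho_\tau,\bar\rho)$. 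The remaining part contributes the cross terms
\[
\int\rho_\tau\nabla^2\log\sqrt{\rho_\tau}:\nabla^2\log\bar\rho,\qquad \int\rho_\tau p'(\rho_\tau)\nabla\log\rho_\tau\cdot\nabla\log\bar\rho,\qquad \int\rho_\tau\nabla V_\tau\cdot\nabla\log\bar\rho .
\]

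Third, for $-\int(\rho_\tau/\bar\rho-1)\d_t\bar\rho\,dx=-\int\nabla(\rho_\tau/\bar\rho)\cdot\bar J\,dx$, I insert \eqref{eq:cons_J} and use $\nabla(\rho_\tau/\bar\rho)=\frac{\rho_\tau}{\bar\rho}(\nabla\log\rho_\tau-\nabla\log\bar\rho)$. The pressure and potential pieces then combine with the previous ones into
\[
-\int\rho_\tau\big[p'(\rho_\tau)\nabla\log\rho_\tau-p'(\bar\rho)\nabla\log\bar\rho\big]\cdot(\nabla\log\rho_\tau-\nabla\log\bar\rho)\,dx .
\]
For the Poisson terms, I write them as $-\int\rho_\tau\nabla(V_\tau-\bar V)\cdot(\nabla\log\rho_\tau-\nabla\log\bar\rho)\,dx$. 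Splitting $\rho_\tau\nabla\log\rho_\tau-\rho_\tau\nabla\log\bar\rho=\nabla(\rho_\tau-\bar\rho)-(\rho_\tau-\bar\rho)\nabla\log\bar\rho$ and using $-\triangle(V_\tau-\bar V)=\rho_\tau-\bar\rho$ produces $-\int(\rho_\tau-\bar\rho)^2$ plus the stated cross term.

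The main obstacle is the quantum (fourth-order) part. It has to be assembled into
\[
-\int\rho_\tau|\nabla^2\log\sqrt{\rho_\tau}-\nabla^2\log\sqrt{\bar\rho}|^2\,dx+2\int\rho_\tau\nabla^2\log\sqrt{\bar\rho}:(\nabla\log\sqrt{\rho_\tau}-\nabla\log\sqrt{\bar\rho})^2\,dx .
\]
I would first integrate the $\bar J$ quantum term $-\frac14\int\nabla(\rho_\tau/\bar\rho)\cdot\diver(\bar\rho\nabla^2\log\bar\rho)\,dx$ by parts once. This gives $\frac14\int\bar\rho\nabla^2(\rho_\tau/\bar\rho):\nabla^2\log\bar\rho\,dx$. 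Next I expand $\bar\rho\nabla^2(\rho_\tau/\bar\rho)=\rho_\tau[\nabla^2\log\rho_\tau-\nabla^2\log\bar\rho+(\nabla\log\rho_\tau-\nabla\log\bar\rho)^{\otimes2}]$. Adding the two other quantum contributions, the $-|\cdot|^2$ term appears by completing the square: the diagonal pieces come from the $H(\rho_\tau)$ step, and the mixed piece comes from the $J_\tau$ step. What is left is $\int\rho_\tau\nabla^2\log\bar\rho:(\nabla\log\rho_\tau-\nabla\log\bar\rho)^{\otimes2}$ with coefficient $\frac14$, which equals the stated $2\int\rho_\tau\nabla^2\log\sqrt{\bar\rho}:(\cdots)^2$ after converting $\log\to\log\sqrt{\cdot}$. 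The difficulty here is bookkeeping of factors $\frac14$, $\frac12$ between $\log$ and $\log\sqrt{\cdot}$, and checking that no extra third-order terms survive. I would verify the latter by comparing with the known DLSS relative-entropy identity.
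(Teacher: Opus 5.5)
Your proposal is correct and follows essentially the same route as the paper. Both split $\frac{d}{dt}H(\rho_\tau|\bar\rho)$ into three pieces and then expand $\nabla^2(\rho_\tau/\bar\rho)$ and complete the square:
\begin{itemize}
\item $\frac{d}{dt}H(\rho_\tau)$, given by the rescaled Proposition \ref{prop:dtentr};
\item $-\int_{\T^2}\log\bar\rho\,\d_t\rho_\tau\,dx$, handled with the rescaled momentum equation, which produces $-\tau^2R_2$;
\item $-\int_{\T^2}\rho_\tau\d_t\log\bar\rho\,dx$, handled with the QDD equation.
\end{itemize}

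One small bookkeeping correction concerns where the quantum pieces come from. The diagonal term $-\frac14\int_{\T^2}\rho_\tau|\nabla^2\log\bar\rho|^2dx$ comes from the $\bar J$ step, not the $H(\rho_\tau)$ step. Likewise, half of the mixed term comes from the $\bar J$ step and only half from the $J_\tau$ step. The total is nevertheless exactly as you claim. Since each quantum term is integrated by parts only once, no third-order terms survive, so the comparison with the DLSS identity is not needed.
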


\begin{proof}
The time derivative of $H(\rho_\tau|\bar\rho)$ is computed by
\[
\frac{d}{dt}H(\rho_\tau|\bar\rho)=\frac{d}{dt}H(\rho_\tau)-\int_{\T^2} \d_t\rho_\tau\log\bar\rho dx-\int_{\T^2} \rho_\tau\d_t\log\bar\rho dx,
\]
where the terms containing $\int_{\T^2}\d_t\rho_\tau$ and $\int_{\T^2}\d_t\bar\rho$ vanish by conservation of mass. The time derivative $\frac{d}{dt}H(\rho_\tau)$ can be obtained by applying the scaling \eqref{eq:rs} to Proposition \ref{prop:dtentr}, which is given by
\begin{align*}
\frac{d}{dt}H(\rho_\tau)=&-\int_{\T^2}\rho_\tau|\nabla^2\log\sqrt\rho_\tau|^2 dx-\int_{\T^2} p'(\rho_\tau)\rho_\tau|\nabla\log\rho_\tau|^2dx\\
&-\int_{\T^2}(\rho_\tau-M_0)^2dx+\tau^2 R_1(\rho_\tau).
\end{align*}
For the remaining terms, by using \eqref{eq:QHD_rs} and \eqref{eq:qdde}, we have
\begin{align*}
-\int_{\T^2}\d_t\rho_\tau\log\bar\rho dx=&\frac12\int_{\T^2}\log\bar\rho\diver\cdot\diver(\rho_\tau\nabla^2\log\sqrt\rho_\tau)dx-\int_{\T^2}\log\bar\rho\triangle p(\rho_\tau)dx\\
&-\int_{\T^2}\log\bar\rho\diver(\rho_\tau\nabla V_\tau) dx+\tau^2\int_{\T^2}\log\bar\rho\d_t^2\rho_\tau dx\\
&-\tau^2\int_{\T^2}\log\bar\rho\diver\cdot\diver(\rho_\tau v_\tau\otimes v_\tau)dx\\
=&\int_{\T^2}\rho_\tau\nabla^2\log\sqrt{\bar\rho}:\nabla^2\log\sqrt\rho_\tau dx+\int_{\T^2} p'(\rho_\tau)\rho_\tau\nabla\log\rho_\tau\cdot\nabla\log\bar\rho dx\\
&+\int_{\T^2}\rho_\tau\nabla\log\bar\rho\cdot\nabla V_\tau dx-\tau^2 R_2(\rho_\tau,\bar\rho),
\end{align*}
where 
\begin{align*}
R_2(\rho_\tau,\bar\rho)=&\int_{\T^2}\rho_\tau (v_\tau\otimes v_\tau):\nabla^2\log\bar\rho dx-\int_{\T^2}\log\bar\rho\d_t^2\rho_\tau dx\\
=&\int_{\T^2}\rho_\tau (v_\tau\otimes v_\tau):\nabla^2\log\bar\rho dx-\frac{d}{dt}\int_{\T^2}\log\bar\rho\d_t\rho_\tau dx+\int_{\T^2}\d_t\log\bar\rho\d_t\rho_\tau dx,
\end{align*}
and
\begin{align*}
-\int_{\T^2} \rho_\tau\d_t\log\bar\rho dx=&\frac12\int_{\T^2} \frac{\rho_\tau}{\bar\rho}\diver\cdot\diver(\bar\rho\nabla^2\log\sqrt{\bar\rho})dx-\int_{\T^2} \frac{\rho_\tau}{\bar\rho}\triangle p(\bar\rho)dx\\
&-\int_{\T^2} \frac{\rho_\tau}{\bar\rho}\diver(\bar\rho\nabla\bar V)dx\\
=&\frac12\int_{\T^2} \bar\rho\nabla^2\log\sqrt{\bar\rho}:\nabla^2\left(\frac{\rho_\tau}{\bar\rho}\right)dx+\int_{\T^2} \nabla p(\bar\rho)\cdot\nabla\left(\frac{\rho_\tau}{\bar\rho}\right)dx\\
&+\int_{\T^2} \rho_\tau(\bar\rho-\mathcal{C}(x))dx-\int_{\T^2} \rho_\tau\nabla\log\bar\rho\cdot\nabla\bar Vdx.
\end{align*}
By substituting 
\[
\nabla\left(\frac{\rho_\tau}{\bar\rho}\right)=\frac{\rho_\tau}{\bar\rho}(\nabla\log\rho_\tau-\nabla\log\bar\rho)
\]
and
\begin{align*}
\nabla^2\left(\frac{\rho_\tau}{\bar\rho}\right)=&\frac{\rho_\tau}{\bar\rho}(\nabla^2\log\rho_\tau-\nabla^2\log\bar\rho)+\frac{\rho_\tau}{\bar\rho}(\nabla\log\rho_\tau-\nabla\log\bar\rho)^2
\end{align*}
into the previous identity, we obtain
\begin{align*}
-\int_{\T^2} \rho_\tau\d_t\log\bar\rho dx=&\int_{\T^2} \rho_\tau\nabla^2\log\sqrt{\bar\rho}:(\nabla^2\log\sqrt\rho_\tau-\nabla^2\log\sqrt{\bar\rho})dx\\
&+2\int_{\T^2} \rho_\tau\nabla^2\log\sqrt{\bar\rho}:(\nabla\log\sqrt\rho_\tau-\nabla\log\sqrt{\bar\rho})^2dx\\
&+\int_{\T^2} p'(\bar\rho)\rho_\tau\nabla\log\bar\rho\cdot(\nabla\log\rho_\tau-\nabla\log\bar\rho)dx.
\end{align*}
To obtain \eqref{eq:dtrlentr}, we summarize the computations above, and notice that the integrals containing $V_\tau$ and $\bar V$ are
\begin{align*}
&-\int_{\T^2}\rho_\tau(\rho_\tau-\bar\rho)dx+\int_{\T^2}\rho_\tau\nabla\log\bar\rho\cdot\nabla(V_\tau-\bar V)dx\\
=&-\int_{\T^2}\rho_\tau(\rho_\tau-\bar\rho)dx+\int_{\T^2}(\rho_\tau-\bar\rho)\nabla\log\bar\rho\cdot\nabla(V_\tau-\bar V)dx+\int_{\T^2}\nabla\bar\rho\cdot\nabla(V_\tau-\bar V)dx\\
=&-\int_{\T^2}\rho_\tau(\rho_\tau-\bar\rho)dx+\int_{\T^2}(\rho_\tau-\bar\rho)\nabla\log\bar\rho\cdot\nabla(V_\tau-\bar V)dx-\int_{\T^2}\bar\rho\triangle(V_\tau-\bar V)dx\\
=&-\int_{\T^2}(\rho_\tau-\bar\rho)^2dx+\int_{\T^2}(\rho_\tau-\bar\rho)\nabla\log\bar\rho\cdot\nabla(V_\tau-\bar V)dx.
\end{align*}
\end{proof}

\begin{proof}[Proof of Theorem \ref{thm:rlxlimit}]
It remains to control the difference terms on the right-hand side of \eqref{eq:dtrlentr} by the relative entropy $H(\rho_\tau|\bar\rho)$ and the dissipation. For simplicity of notation, we denote by $C$ a generic positive constant independent of $\tau$. Throughout this proof, $C$ is chosen sufficiently large depending only on the right-hand side of \eqref{eq:bd_I_rxl}.

We first write
\begin{align*}
\int_0^t\int_{\T^2} \rho_\tau\nabla^2\log\sqrt{\bar\rho}&:(\nabla\log\sqrt\rho_\tau-\nabla\log\sqrt{\bar\rho})^2dxds\\
\leq & \delta^{-\frac12}\|\rho_\tau\|_{L^\infty_tL^4_x}\|\sqrt{\bar\rho}\nabla^2\log\sqrt{\bar\rho}\|_{L^2_tL^4_x}\|\nabla\log\sqrt\rho_\tau-\nabla\log\sqrt{\bar\rho}\|^2_{L^2_tL^4_x}.
\end{align*}
It follows from the interpolation inequality that
\begin{equation}\label{eq:bd_log1}
\begin{aligned}
\|\rho_\tau\|_{L^\infty W^{1,4}_x}\leq & C_0\|\sqrt\rho_\tau\|_{L^\infty H^2_x}^2\leq C,\\
\|\sqrt{\bar\rho}\nabla^2\log\sqrt{\bar\rho}\|_{L^2_tL^4_x}\leq &C_0\|\nabla\sqrt{\bar\rho}\|_{L^2H^2_x}\leq C,\\
\|\nabla\log\sqrt\rho_\tau-\nabla\log\sqrt{\bar\rho}\|^2_{L^2_tL^4_x}\leq & C_0\|\log\sqrt\rho_\tau-\log\sqrt{\bar\rho}\|_{L^2_{t,x}}^2\\
&+C_0^{-1}\|\nabla^2\log\sqrt\rho_\tau-\nabla^2\log\sqrt{\bar\rho}\|_{L^2_{t,x}}^2.
\end{aligned}
\end{equation}
Moreover, by using
\begin{equation}\label{eq:bd_log2}
|\log\sqrt\rho_\tau-\log\sqrt{\bar\rho}|\leq \frac{|\rho_\tau-\bar\rho|}{2\min\{\rho_\tau,\bar\rho\}}\leq \frac{|\rho_\tau-\bar\rho|}{2\delta},
\end{equation}
and the equivalence \eqref{eq:equiv_entr} of $H(\rho_\tau|\bar\rho)$ and $\|\rho_\tau-\bar\rho\|_{L^2_x}^2$, we obtain
\begin{align*}
\int_0^t\int_{\T^2} \rho_\tau\nabla^2\log\sqrt{\bar\rho}&:(\nabla\log\sqrt\rho_\tau-\nabla\log\sqrt{\bar\rho})^2dxds\\
\leq & C\int_0^t H(\rho_\tau|\bar\rho)ds+\frac14\int_0^t\int_{\T^2}\rho_\tau|\nabla^2\log\sqrt\rho_\tau-\nabla^2\log\sqrt{\bar\rho}|^2 dxds.
\end{align*}

To estimate the pressure term in the right-hand side of \eqref{eq:dtrlentr}, we write
\begin{align*}
p'(\rho_\tau)\nabla\log\rho_\tau-p'(\bar\rho)\nabla\log\bar\rho=&[p'(\rho_\tau)-p'(\bar\rho)]\nabla\log\rho_\tau\\
&+p'(\bar\rho)(\nabla\log\rho_\tau-\nabla\log\bar\rho),
\end{align*}
which gives
\begin{align*}
-\int_0^t\int_{\T^2} \rho_\tau&[p'(\rho_\tau)\nabla\log\rho_\tau-p'(\bar\rho)\nabla\log\bar\rho]\cdot(\nabla\log\rho_\tau-\nabla\log\bar\rho) dxds\\
\leq & \int_0^t\int_{\T^2} |[p'(\rho_\tau)-p'(\bar\rho)]\nabla\rho_\tau\cdot(\nabla\log\rho_\tau-\nabla\log\bar\rho) |dxds\\
&+\int_0^t\int_{\T^2}\rho_\tau|p'(\bar\rho)||\nabla\log\rho_\tau-\nabla\log\bar\rho|^2dxds=\textrm{I}+\textrm{II}.
\end{align*}
Since $p\in C^2(0,+\infty)$, we can write
\[
p'(\rho_\tau)-p'(\bar\rho)=p''(\rho_*)(\rho_\tau-\bar\rho),
\]
for some pointwise value $\rho_*$ between $\rho_\tau$ and $\bar\rho$. 
We estimate the integral $\textrm{I}$ as
\begin{align*}
\textrm{I}\leq & \|\nabla\rho_\tau\|_{L^\infty_tL^4_x}\|p''(\rho_*)\|_{L^\infty_{t,x}}\|\rho_\tau-\bar\rho\|_{L^2_{t,x}}\|\nabla\log\rho_\tau-\nabla\log\bar\rho\|_{L^2_tL^4_x}\\
\leq & C\|\rho_\tau-\bar\rho\|_{L^2_{t,x}}^2+\frac14\|\sqrt\rho_\tau(\nabla^2\log\sqrt\rho_\tau-\nabla^2\log\sqrt{\bar\rho})\|_{L^2_{t,x}}^2\\
\leq & C\int_0^tH(\rho_\tau|\bar\rho)ds+\frac14\int_0^t\int_{\T^2}\rho_\tau\left|\nabla^2\log\sqrt\rho_\tau-\nabla^2\log\sqrt{\bar\rho}\right|^2dxds,
\end{align*}
where we again use \eqref{eq:bd_log1} and \eqref{eq:bd_log2}. 
Finally, for the term $\textrm{II}$, using $p\in C^2(0,+\infty)$, \eqref{eq:bd_log1}, and \eqref{eq:bd_log2}, we argue similarly to obtain
\begin{align*}
\textrm{II}\leq & C\int_0^t H(\rho_\tau|\bar\rho)ds+\frac14\int_0^t\int_{\T^2}\rho_\tau|\nabla^2\log\sqrt\rho_\tau-\nabla^2\log\sqrt{\bar\rho}|^2 dxds.
\end{align*}
For the integral of electric potentials, we control it by
\begin{align*}
\int_0^t\int_{\T^2}(\rho_\tau-\bar\rho)\nabla\log\bar\rho\cdot\nabla(V_\tau-\bar V)dxds\leq & \|\nabla\log\bar\rho\|_{L^\infty_tL^4_x}\|\rho_\tau-\bar\rho\|_{L^2_{t,x}}\|\nabla(V_\tau-\bar V)\|_{L^2_tL^4_x}\\
\leq& C\|\rho_\tau-\bar\rho\|_{L^2_{t,x}}^2\leq C\int_0^t H(\rho_\tau|\bar\rho)ds,
\end{align*}
where we use the Poisson equation for $V_\tau$, $\bar V$ and Poincar\'e inequality
\[
\|\nabla(V_\tau-\bar V)\|_{L^4_x}\leq C\|\triangle(V_\tau-\bar V)\|_{L^2_x}=C\|\rho_\tau-\bar\rho\|_{L^2_x}.
\]

Summarizing the inequalities above and integrating in time, since $(\rho_\tau,\bar\rho)$ share the same initial data, we have
\begin{align*}
H(\rho_\tau|\bar\rho)(t)+\frac14\int_0^t\int_{\T^2}\rho_\tau &|\nabla^2\log\sqrt\rho_\tau-\nabla^2\log\sqrt{\bar\rho}|^2dxds\\
\leq & C\int_0^t H(\rho_\tau|\bar\rho)ds+\tau^2\int_0^t[R_1(\rho_\tau)-R_2(\rho_\tau,\bar\rho)]ds.
\end{align*}

Now we estimate the last term $\tau^2\int_0^t[R_1(\rho_\tau)-R_2(\rho_\tau,\bar\rho)]ds$. Recall from Proposition \ref{prop:6.2} that 
\begin{equation}\label{eq:6.13}
\begin{aligned}
\tau^2\int_0^t[R_1(\rho_\tau)-R_2(\rho_\tau,\bar\rho)]ds=&-\tau^2\left.\int_{\T^2}(\log\rho_\tau-\log\bar\rho)\d_t\rho_\tau dx\right|_{s=0}^{s=t}\\
&+\tau^2\int_0^t\int_{\T^2}(\nabla^2\log\rho_\tau-\nabla^2\log\bar\rho):(\rho_\tau v_\tau\otimes v_\tau )dxds\\
&+\tau^2\int_0^t\int_{\T^2}(4\rho_\tau\sigma^2_\tau+\d_t\log\bar\rho\d_t\rho_\tau) dxds.
\end{aligned}
\end{equation}
By the Cauchy--Schwarz inequality, the first term is controlled by
\begin{align*}
\tau^2\left|\int_{\T^2}(\log\rho_\tau-\log\bar\rho)\d_t\rho_\tau dx\right|\leq & \kappa^{-1}\tau^4\|\sqrt\rho_\tau\sigma_\tau\|_{L^2_x}^2+\kappa \|\sqrt\rho_\tau(\log\rho_\tau-\log\bar\rho)\|_{L^2_x}^2.
\end{align*}
It follows from \eqref{eq:bd_I_rxl} that
\[
\tau^4\|\sqrt\rho_\tau\sigma_\tau\|_{L^2_x}^2\leq C\,\tau^2,
\]
and by the upper and lower bounds of $\rho_\tau$ and $\bar\rho$, we have
\[
\|\sqrt\rho_\tau(\log\rho_\tau-\log\bar\rho)\|_{L^2_x}^2\sim\|\rho_\tau-\bar\rho\|_{L^2_x}^2\sim H(\rho_\tau|\bar\rho).
\]
Therefore, we can choose $\kappa$ small such that
\[
\tau^2\left|\int_{\T^2}(\log\rho_\tau-\log\bar\rho)\d_t\rho_\tau dx\right|\leq C\,\tau^2+\frac12 H(\rho_\tau|\bar\rho).
\]
For the second term in the right-hand side of \eqref{eq:6.13}, we have
\begin{align*}
\tau^2\int_0^t\int_{\T^2}&(\nabla^2\log\rho_\tau-\nabla^2\log\bar\rho):\rho_\tau v_\tau\otimes v_\tau dxds\\
\leq&\frac18\int_0^t\int_{\T^2}\rho_\tau |\nabla^2\log\sqrt\rho_\tau-\nabla^2\log\sqrt{\bar\rho}|^2dxds+8\tau^4\int_0^t\int_{\T^2}\rho_\tau |v_\tau|^4dxds,
\end{align*}
and by \eqref{eq:bd_I_rxl}, the last term is bounded by 
\[
\tau^4\int_0^t\int_{\T^2}\rho_\tau |v_\tau|^4dxds\leq C\,\tau^2.
\]
The last integral on the right-hand side of \eqref{eq:6.13} is controlled by \eqref{eq:bd_I_rxl} and \eqref{eq:bd_barrho_3} as follows:
\begin{align*}
\tau^2\int_0^t\int_{\T^2}&(4\rho_\tau\sigma^2_\tau+\d_t\log\bar\rho\d_t\rho_\tau) dxds\\
\leq & C\,\tau^2\int_0^t(\|\sqrt\rho_\tau\sigma_\tau\|_{L^2_x}^2+\|\sqrt\rho_\tau\sigma_\tau\|_{L^2_x}\|\d_t\sqrt{\bar\rho}\|_{L^2_x})ds\leq C\,\tau^2.
\end{align*}

Summarizing the arguments above, we obtain
\begin{align*}
\frac12H(\rho_\tau|\bar\rho)(t)+\frac18\int_0^t\int_{\T^2}\rho_\tau &\left|\nabla^2\log\sqrt\rho_\tau-\nabla^2\log\sqrt{\bar\rho}\right|^2dxds\\
\leq & C\int_0^t H(\rho_\tau|\bar\rho)ds+C\,\tau^2,
\end{align*}
where we have absorbed all small fractions of the dissipation into the left-hand side. 
Then, by the integral form of Gronwall's inequality and $H(\rho_\tau|\bar\rho)(0)=0$, we obtain
\[
H(\rho_\tau|\bar\rho)(t)+\int_0^t\int_{\T^2}\rho_\tau \left|\nabla^2\log\sqrt\rho_\tau-\nabla^2\log\sqrt{\bar\rho}\right|^2dxds\leq C\,\tau^2
\]
for all $t\in[0,T)$. By the equivalence \eqref{eq:equiv_entr} between $H(\rho_\tau|\bar\rho)$ and $\|\rho_\tau-\bar\rho\|_{L^2_x}^2$, this yields
\[
\|\rho_\tau-\bar\rho\|_{L^\infty(0,T;L^2_x)}\leq C\tau,
\]
and the proof is complete.
\end{proof}

\section*{Appendix: Hydrodynamic derivation of energy and higher-order balance law}

The energy balance law \eqref{eq:en_disp} and the time derivative \eqref{eq:rl_I} of the higher-order functional $I(t)$ can be derived by a purely hydrodynamic argument based on the dynamics of the QHD system \eqref{eq:QHD_md}. We present these computations in this appendix for completeness.

\begin{prop}\label{lem:dte_apdx}
Let $(\rho, v)$ be a GCP solution to \eqref{eq:QHD_md} with finite energy, such that $\rho>0$. Then the energy density 
\[
e(t,x)=\frac12|\nabla\sqrt\rho|^2+\frac12\rho |v|^2+f(\rho)+\frac12|\nabla V|^2
\]
satisfies the following distributional equation
\begin{equation}\label{eq:en_cons_apdx}
\d_te+\diver(\rho v\mu-\d_t\sqrt{\rho}\nabla\sqrt{\rho}-V\nabla\d_tV)+\frac{1}{\tau}\rho |v|^2=0.
\end{equation}
As a consequence, the total energy $E(t)=\int_{\T^2} e(t,x)dx$ satisfies the energy balance law
\begin{equation}\label{eq:en_disp_apdx}
E(t)+\frac{1}{\tau}\int_0^t\int_{\T^2}\rho |v|^2dxds=E_0.
\end{equation}
\end{prop}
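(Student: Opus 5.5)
We compute $\d_t e$ term by term from \eqref{eq:QHD_md}, using the continuity equation $\d_t\rho=-\diver(\rho v)$ and the velocity equation $\d_t v=-\nabla\mu-\frac1\tau v+\nabla(\frac12|v|^2+f'(\rho)+V-\mu)+\dots$. More precisely, we first record that the velocity equation can be written as $\d_t v+\nabla\mu=-\frac1\tau v$ after using $(v\cdot\nabla)v=\nabla\frac12|v|^2$. This uses $\operatorname{curl}v=0$, which holds for our class by Remark \ref{rem:avrg0}; in general one keeps the term $v\times\operatorname{curl}v$, which is orthogonal to $\rho v$ and drops out.

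The four pieces of $e$ are handled as follows.
\begin{itemize}
\item \emph{Kinetic part.} We compute $\d_t(\frac12\rho|v|^2)=\frac12|v|^2\d_t\rho+\rho v\cdot\d_t v=-\frac12|v|^2\diver(\rho v)-\rho v\cdot\nabla\mu-\frac1\tau\rho|v|^2$.
\item \emph{Internal energy.} We have $\d_t f(\rho)=-f'(\rho)\diver(\rho v)$.
\item \emph{Quantum part.} Write $\d_t\frac12|\nabla\sqrt\rho|^2=\diver(\d_t\sqrt\rho\nabla\sqrt\rho)-\d_t\sqrt\rho\,\triangle\sqrt\rho$. Since $\d_t\sqrt\rho=\frac{\d_t\rho}{2\sqrt\rho}$, the last term equals $-\frac{\triangle\sqrt\rho}{2\sqrt\rho}\d_t\rho=\frac{\triangle\sqrt\rho}{2\sqrt\rho}\diver(\rho v)$.
\item \emph{Electric part.} Using $-\triangle\d_tV=\d_t\rho$, we get $\d_t\frac12|\nabla V|^2=\diver(V\nabla\d_tV)-V\triangle\d_tV=\diver(V\nabla\d_tV)-V\diver(\rho v)$.
\end{itemize}
Summing, the non-divergence terms collect as $-\mu\,\diver(\rho v)-\rho v\cdot\nabla\mu=-\diver(\rho v\mu)$ by the definition \eqref{eq:chem} of $\mu$. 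This yields \eqref{eq:en_cons_apdx}, with the signs of the flux terms as stated once the divergence terms are moved to the left. Integrating over $\T^2$ kills the divergence and gives $\frac{d}{dt}E+\frac1\tau\int\rho|v|^2=0$, hence \eqref{eq:en_disp_apdx} after time integration.

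The main obstacle is rigor at the GCP regularity level, where the individual products must be meaningful as distributions. Each flux is integrable: $\rho v\mu=\sqrt\rho v\cdot\sqrt\rho\mu\in L^1$ by Definition \ref{def:GCPsln}; $\d_t\sqrt\rho\nabla\sqrt\rho\in L^1$; and $V\nabla\d_tV\in L^1$ by elliptic regularity, since $\d_t\rho=2\sqrt\rho\,\d_t\sqrt\rho\in L^\infty_tL^{3/2}_x$. To justify the computation we mollify in $x$ as in Proposition \ref{prop:dtentr}. We then write the identity for the regularized quantities, with commutator errors, and pass to the limit $\eps\to0$. The lower bound $\rho>0$ makes $\frac{\triangle\sqrt\rho}{\sqrt\rho}$ and $\d_t\sqrt\rho$ legitimate, and the bounds on $\sqrt\rho\mu$ and $\d_t\sqrt\rho$ in $L^2$ let the commutators vanish in $L^1_{loc}$.

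Alternatively, under the hypotheses of Section \ref{sect:dt}, one can bypass the commutator estimates: lift to $\psi$ by Proposition \ref{prop:lift2} and invoke the already rigorous Schr\"odinger computation of Proposition \ref{prop:3.2}(1).
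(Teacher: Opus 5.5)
Your computation is correct and is essentially the paper's proof. The paper likewise rewrites the velocity equation as $\d_t v+\nabla\mu+\frac1\tau v=0$ using $\operatorname{curl}v=0$, splits off $\diver(\nabla\sqrt\rho\,\d_t\sqrt\rho)$ and $\diver(V\nabla\d_tV)$, and collects the remaining terms into $-\mu\diver(\rho v)-\rho v\cdot\nabla\mu=-\diver(\rho v\mu)$. The paper does this formally, reading $\nabla\mu$ in $H^{-1}_x$ without mollification, so your justification remarks go beyond it; note, however, that your fallback through Proposition~\ref{prop:3.2}(1) only yields the integrated law \eqref{eq:en_disp_apdx}, not the local identity \eqref{eq:en_cons_apdx}.
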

\begin{proof}
Since we are dealing with solutions with positive density $\rho>0$, we can write system \eqref{eq:QHD_md} as
\begin{equation}\label{eq:QHD_apdx}
\left\{\begin{aligned}
&\d_t\rho +\diver(\rho  v )=0,\\
&\d_tv + (v\cdot \nabla)v +\nabla f'(\rho )+\nabla V 
=\frac12\nabla\left(\frac{\triangle\sqrt{\rho }}{\sqrt{\rho }}\right)-\frac{1}{\tau}v.
\end{aligned}\right.
\end{equation}
We notice that, by the irrotationality condition $\operatorname{curl}v=0$, the equation for the velocity can be equivalently written as
\begin{equation*}
\d_tv +\nabla\mu +\frac{1}{\tau}v =0,
\end{equation*}
where $\mu$ is the chemical potential defined in \eqref{eq:chem}, and $\nabla\mu$ is interpreted in the $H^{-1}_x$ sense. 
By using the expression of $e(t,x)$, we can differentiate it with respect to time and find
\begin{equation*}
\begin{aligned}
\d_te =&\nabla\sqrt{\rho }\cdot\d_{t}\nabla\sqrt{\rho }+\left(\frac{1}{2}|v|^2 +f'(\rho )\right)\d_t\rho +\rho  v\cdot \d_tv +\nabla V\cdot \nabla\d_tV\\
=&\diver\left(\nabla\sqrt{\rho }\d_t\sqrt{\rho }\right)
+\left(-\frac12\frac{\triangle\sqrt{\rho }}{\sqrt{\rho }}+\frac{1}{2}|v| ^2+f'(\rho )+V \right)\d_t\rho \\
&+\rho  v \cdot\d_tv +V \triangle\d_tV +\nabla V\cdot \nabla\d_tV.
\end{aligned}
\end{equation*}
Using the QHD equations above and definition \eqref{eq:chem}, we then have
\begin{equation*}\begin{aligned}
\d_te =&\diver\left(\nabla\sqrt{\rho }\d_t\sqrt{\rho }\right)-\mu \diver(\rho  v )\\
&+\diver(V \nabla\d_tV )-\rho  v\cdot \nabla\mu -\frac{1}{\tau}\rho  |v| ^2\\
=&\diver\left(\nabla\sqrt{\rho }\d_t\sqrt{\rho }-\rho  v \mu +V \nabla\d_tV \right)-\frac{1}{\tau}\rho  |v|^2 .
\end{aligned}
\end{equation*}
Last, the energy balance law \eqref{eq:en_disp_apdx} is obtained by integrating \eqref{eq:en_cons_apdx} on the time interval $[0,t]$.
\end{proof}

We now turn to the time derivative of the functional $I(t)$ defined in \eqref{eq:higher}. In contrast with the results obtained in Section \ref{sect:exist2d}, which were rigorously established through the mollification of wave functions, the regularity of GCP solutions is not sufficient to justify the limiting procedure when one regularizes hydrodynamic variables such as $\rho$, $v$, and $\mu$. Whether there is an intrinsic distinction between the wave-function formulation and the purely hydrodynamic formulation is an interesting question. Here we restrict ourselves to a formal computation under the assumption of smooth solutions.

\begin{prop}\label{prop:dtI_apdx}
Let $(\rho , v )$ be a smooth solution to \eqref{eq:QHD_md} such that $\rho >0$. Then the time derivative of $I (t)$ is given by
\begin{align*}
\frac{d}{dt}I (t)+\frac{1}{\tau}\int_{\T^2} \rho \sigma ^2 dx=&\int_{\T^2}\mu \d_tp(\rho )dx+\int_{\T^2}\rho \mu \d_tV  dx-\frac{1}{\tau}\int_{\T^2}\rho  |v| ^2\mu \,dx.
\end{align*}
\end{prop}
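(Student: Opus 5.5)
We prove the identity by a direct hydrodynamic computation for smooth solutions with $\rho>0$. Write $I=\frac12\int\rho\mu^2+\frac12\int\rho\sigma^2$ and use the velocity equation in the form $\d_tv+\nabla\mu+\frac1\tau v=0$, which holds since $\operatorname{curl}v=0$ (as in the proof of Proposition \ref{lem:dte_apdx}), together with $\d_t\rho=-\diver(\rho v)=2\rho\sigma$. Differentiating,
\[
\frac{d}{dt}I=\int\rho\mu\,\d_t\mu\,dx+\int\frac12\mu^2\d_t\rho\,dx+\int\rho\sigma\,\d_t\sigma\,dx+\int\frac12\sigma^2\d_t\rho\,dx .
\]
We compute $\d_t\mu$ term by term from \eqref{eq:chem}. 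The quantum part gives $\d_t\bigl(-\frac{\triangle\sqrt\rho}{2\sqrt\rho}\bigr)$. The kinetic part gives $v\cdot\d_tv=-v\cdot\nabla\mu-\frac1\tau|v|^2$. The remaining part gives $f''(\rho)\d_t\rho+\d_tV$. Since $\rho f''(\rho)\d_t\rho=\d_tp(\rho)$, the pieces $\int\rho\mu(f''\d_t\rho+\d_tV)$ produce exactly the two terms $\int\mu\d_tp(\rho)+\int\rho\mu\d_tV$. The collisional piece $-\frac1\tau\int\rho|v|^2\mu$ of the claimed right-hand side also appears directly. It remains to show that everything else cancels, apart from the extra $-\frac1\tau\int\rho\sigma^2$.

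For the quantum contribution, set $\phi=\sqrt\rho$, so that $\d_t\phi=\phi\sigma$ and
\[
\rho\,\d_t\Bigl(-\frac{\triangle\phi}{2\phi}\Bigr)=-\frac12\phi\triangle(\phi\sigma)+\frac12\sigma\phi\triangle\phi .
\]
Integrating against $\mu$ and moving the Laplacian onto $\phi\mu$ yields $-\frac12\int\phi\sigma\triangle(\phi\mu)+\frac12\int\sigma\mu\phi\triangle\phi$. For the $\sigma$ part we use the continuity equation: $\d_t\sigma=\d_t(-\diver(\rho v)/(2\rho))$, with $\d_tv=-\nabla\mu-\frac1\tau v$. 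This gives
\[
\rho\,\d_t\sigma=\frac12\diver(\rho\nabla\mu)+\frac1{2\tau}\diver(\rho v)+\frac12\diver(\d_t\rho\, v)+\sigma\,\diver(\rho v)\cdot(\ldots),
\]
which we expand carefully using $\d_t\rho=2\rho\sigma$. The term $\frac1{2\tau}\int\sigma\diver(\rho v)=-\frac1\tau\int\rho\sigma^2$ is exactly the damping term on the left. The term $\frac12\int\sigma\diver(\rho\nabla\mu)$ should cancel against $-\frac12\int\phi\sigma\triangle(\phi\mu)+\frac12\int\sigma\mu\phi\triangle\phi$. Indeed,
\[
\phi\triangle(\phi\mu)-\mu\phi\triangle\phi=\diver(\phi^2\nabla\mu)=\diver(\rho\nabla\mu),
\]
so this cancellation is an exact pointwise identity.

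The main obstacle is the bookkeeping of the remaining cubic transport terms. These are $-\int\rho\mu v\cdot\nabla\mu$, $\frac12\int\mu^2\d_t\rho$, $\frac12\int\sigma^2\d_t\rho$, and the leftover pieces from $\rho\d_t\sigma$ involving $v\cdot\nabla\sigma$ and $\sigma\diver(\rho v)$. We plan to show that they integrate to zero. First, $\frac12\int\mu^2\d_t\rho=-\frac12\int\mu^2\diver(\rho v)=\int\rho\mu v\cdot\nabla\mu$, which cancels the first term. Second, writing $\rho\d_t\sigma+\frac12\sigma\d_t\rho$ as a divergence plus terms proportional to $\frac1\tau$ and $\nabla\mu$, the $\sigma^2$ terms should combine to $\int\rho\sigma^2\cdot 2\sigma-\int\rho\sigma^2\cdot2\sigma=0$. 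We will verify this by expanding $\d_t\sigma=\d_t^2\log\sqrt\rho$ via $\d_t^2\rho=-\diver(\d_t(\rho v))$ and $\d_t(\rho v)=2\rho\sigma v-\rho\nabla\mu-\frac1\tau\rho v$. If a direct expansion becomes messy, the fallback is to use the Madelung-transformed computation in the proof of Proposition \ref{prop:3.2}, where this identity was already obtained via $\psi$. For smooth positive solutions, $\psi=\sqrt\rho e^{iS}$ is smooth and solves \eqref{eq:NLS}, which gives the result at once.
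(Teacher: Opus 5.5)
Your proposal is correct and follows essentially the paper's route: both derive transport-type evolution laws for $\mu$ and $\sigma$ (in particular $\rho\d_t\sigma+\rho v\cdot\nabla\sigma+\frac1\tau\rho\sigma=\frac12\diver(\rho\nabla\mu)$, while the quantum part of the $\mu$-law equals $-\frac12\diver(\rho\nabla\sigma)$), cancel the transport terms against $\frac12\int(\mu^2+\sigma^2)\d_t\rho$, and cancel the two cross terms by symmetry. The only real difference is that you get the $\mu$-law by differentiating $\mu$ term by term, whereas the paper derives it from $\rho\mu=-\frac14\triangle\rho+e+p(\rho)-\frac12|\nabla V|^2+\rho V$ and the local energy balance; one slip to fix is that in your expansion of $\rho\d_t\sigma$ the term is $-\frac12\diver(\d_t\rho\, v)$ (not $+$) and the last term is exactly $\sigma\diver(\rho v)$, after which the cubic $\sigma$-terms cancel as you predict, so the Madelung fallback is not needed.
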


\begin{proof}
By using the formula 
\begin{equation*}
\rho \mu =-\frac14\triangle\rho +e +p(\rho )-\frac12|\nabla V |^2+\rho  V 
\end{equation*}
together with \eqref{eq:en_cons_apdx}, we have
\begin{equation*}
\begin{aligned}
\d_t(\rho \mu )=&\d_t\left(e -\frac14\triangle\rho +p(\rho )-\frac12|\nabla V |^2+\rho  V \right)\\
=&\diver(\nabla\sqrt{\rho }\d_t\sqrt{\rho }-\rho  v \mu +V \nabla\d_tV )\\
&-\frac14\triangle\d_t\rho -\frac{1}{\tau}\rho  |v|^2 +\d_tp(\rho )-\nabla V \cdot \nabla\d_tV +V \d_t\rho +\rho \d_tV .
\end{aligned}
\end{equation*}
Again by using the continuity equation for $\rho$ and the Poisson equation for $V $, we can write
\begin{equation}\label{eq:mu_evol_apdx}
\begin{aligned}
\rho \d_t\mu +\rho  v \cdot\nabla\mu =\diver(\nabla\sqrt{\rho }\d_t\sqrt{\rho })-\frac14\triangle\d_t\rho \\
-\frac{1}{\tau}\rho  |v|^2 +\d_tp(\rho )+\rho \d_tV .
\end{aligned}
\end{equation}
Now, to write the equation for $\sigma $ we may proceed in the following way. By writing the continuity equation as below
\begin{equation*}
\d_t\rho +v \cdot\nabla\rho +\rho \diver v =0,
\end{equation*}
we find the equation for $\log\sqrt{\rho }$, namely
\begin{equation*}
\d_t\log\sqrt{\rho }+v\cdot\nabla\log\sqrt{\rho }+\frac12\diver v =0.
\end{equation*}
Differentiating the last equation with respect to time and using $\d_tv=-\nabla\mu-\frac{1}{\tau}v$, we obtain
\begin{equation*}
\d_t\sigma +v \cdot \nabla\sigma -\left(\nabla\mu +\frac{1}{\tau}v\right)\cdot\nabla\log\sqrt{\rho }-\frac12\diver\left(\nabla\mu +\frac{1}{\tau}v\right)=0.
\end{equation*}
By multiplying this by $\rho $ and using
\[
\rho  v \cdot\nabla\log\sqrt\rho +\frac12\rho \diver v =\frac12\diver(\rho  v )=-\rho \sigma ,
\]
we get
\begin{equation}\label{eq:sigma_evol_apdx}
\rho \d_t\sigma +\rho  v\cdot \nabla\sigma +\frac{1}{\tau}\rho \sigma =\frac12\diver(\rho \nabla\mu ).
\end{equation}
Now we can use the equations \eqref{eq:mu_evol_apdx} and \eqref{eq:sigma_evol_apdx} to compute the time derivative of the functional $I (t)$. After integrating by parts we obtain
\begin{align*}
\frac{d}{dt}I (t)=&\frac{1}{2}\int_{\T^2} (\mu ^2+\sigma ^2)\d_t\rho  dx-\frac{1}{2}\int_{\T^2} \rho  v\cdot \nabla(\mu ^2+\sigma ^2)dx\\
&-\frac{1}{2}\int_{\T^2} \mu \diver(\rho\nabla\sigma )dx+\frac{1}{2}\int_{\T^2}\sigma \diver(\rho \nabla\mu )dx\\
&+\int_{\T^2} \mu \d_tp(\rho )dx+\int_{\T^2} \rho \mu \d_tV  dx-\frac{1}{\tau}\int_{\T^2} \rho \sigma ^2dx-\frac{1}{\tau}\int_{\T^2}\rho  |v| ^2\mu  dx\\
=&\int_{\T^2} \mu \d_tp(\rho )dx+\int_{\T^2} \rho \mu \d_tV  dx-\frac{1}{\tau}\int_{\T^2} \rho \sigma ^2dx-\frac{1}{\tau}\int_{\T^2}\rho  |v| ^2\mu dx.
\end{align*}
\end{proof}

\section*{Acknowledgments}

This work was partially supported by the Strategic Priority Research Program of the Chinese Academy of Sciences under Grant No. XDB0510201 (H.Z.).


\begin{thebibliography}{100}

\bibitem{AI} M. Ancona, G. Iafrate, \emph{Quantum correction to the equation of state of an electron gas in a semiconductor}, Phys. Rev. B {\bf 39} (1989), 9536-9540.

\bibitem{AT} M.G. Ancona, H.F. Tiersten, \emph{Macroscopic physics of the silicon inversion layer}, Phys. Rev. B {\bf 35}, no. 15 (1987), 7959-7965.


\bibitem{ACLS} P. Antonelli, G. Cianfarani Carnevale, C. Lattanzio, S. Spirito., \emph{ Relaxation Limit from the Quantum Navier-Stokes Equations to the Quantum Drift-Diffusion Equation}, J. Nonlinear Sci., {\bf 31}, 71 (2021).


\bibitem{AM1} P. Antonelli, P. Marcati, \emph{On the finite energy weak solutions to a system in Quantum Fluid Dynamics}, Comm. Math. Phys. {\bf 287} (2009), no 2, 657--686.

\bibitem{AM2} P. Antonelli, P. Marcati, \emph{The quantum hydrodynamics system in two space dimensions}, Arch. Rat. Mech. Anal. \textbf{203} (2012), 499--527.

\bibitem{AM_b} P. Antonelli, P. Marcati, \emph{An introduction to the mathematical theory of quantum fluids}, to appear UMI Springer Lecture Notes.


\bibitem{AMZ1} P. Antonelli, P. Marcati, H. Zheng,  \textit{Genuine Hydrodynamic Analysis to the 1-D QHD system: Existence, Dispersion and Stability},  Comm. Math. Phys. {\bf 383}, (2021), 2113--2161.

\bibitem{AMZ2} P. Antonelli, P. Marcati, H. Zheng,  \textit{An Intrinsically Hydrodynamic Approach to Multidimensional QHD Systems},  Arch. Rat. Mech. Anal. {\bf 247}, no 24, (2023).

\bibitem{AMZ3} P. Antonelli, P. Marcati, H. Zheng,  \textit{The relaxation-time limit for weak solutions to the quantum hydrodynamics system},  Arch. Rat. Mech. Anal. {\bf 249}, no 73, (2025).




\bibitem{BG} S. Benzoni-Gavage, \emph{Propagating phase boundaries and capillary fluids}, available online at \url{http://math.univ-lyon1.fr/~benzoni/Levico.pdf}.

\bibitem{BW} G. Baccarani, M.R. Wordeman \emph{An investigation of steady-state velocity overshoot in silicon}, Solid-State Electronics, {\bf 28} (1985),407--416.


\bibitem{Ber} A. L. Bertozzi, \emph{The mathematics of moving contact lines in thin liquid films}, Notices Amer. Math. Soc., {\bf 45} (1998), 689--697.


\bibitem{BP1} A. L. Bertozzi, M. C. Pugh, \emph{Long-wave instabilities and saturation in thin film equations}, Comm. Pure Appl. Math., {\bf 51} (1998),  625--661.

\bibitem{DLSS1} B. Derrida, J. Lebowitz, E. Speer, and H. Spohn, \emph{Dynamics of an anchored Toom interface}, J. Phys. A, {\bf 24} (1991), 4805--4834.

\bibitem{DLSS2} B. Derrida, J. Lebowitz, E. Speer, and H. Spohn, \emph{Fluctuations of a stationary nonequilibrium interface}, Phys. Rev. Lett., {\bf 67} (1991), 165--168.

\bibitem{DM} D. Donatelli, P. Marcati, \emph{Convergence of Singular Limits for Multi-D Semilinear Hyperbolic Systems to Parabolic Systems}, Transactions of the American Mathematical Society, {\bf 365} No.5 (2004), 2093--2121.

\bibitem{DGJ} J. Dolbeault, I. Gentil, A. J\"ungel, \emph{A logarithmic fourth-order parabolic equation and related logarithmic Sobolev inequalities}, Comm. Math. Sci. {\bf 4} (2006),  No.2, 275--290


\bibitem{G} C. Gardner, \emph{The quantum hydrodynamic model for semiconductor devices}, SIAM J. Appl. Math. {\bf 54} (1994), 409-427.

\bibitem{Guo} B-L. Guo, \emph{Quantum Hydrodynamic Equation and Its Mathematical Theory}, World Scientific, (2023).

\bibitem{GST} U. Gianazza, G. Savar\'e, G. Toscani, \emph{The Wasserstein gradient flow of the fisher information and the quantum drift-diffusion equation}, Arch. Ration. Mech. Anal., {\bf 194} (2009), 133--220.

\bibitem{H} F. Haas, \emph{Quantum plasmas: An hydrodynamic approach}, New York: Springer (2011).

\bibitem{HC} E. Heifetz, E. Cohen, \emph{Toward a Thermo-hydrodynamic Like Description of Schr\"odinger Equation via the Madelung Formulation and Fisher Information}, Found. Phys. {\bf 45} (2015), 1514--1525.

\bibitem{RaHong} H. Hong, S. Ra \emph{The existence, uniqueness and exponential decay of global solutions in the full quantum hydrodynamic equations for semiconductors}, Z. Angew. Math. Phys. {\bf 72} (2021), Article ID 107, 32 pp.

\bibitem{HL} L. Hsiao, T.P. Liu, \emph{Convergence to nonlinear diffusion waves for solutions of a system of hyperbolic
conservation laws with damping}, Comm. Math. Phys. 143 (1992) 599-–605.

\bibitem{HP} F. Huang, R. Pan\emph{Asymptotic behavior of the solutions to the damped compressible Euler equations with vacuum}, J. Diff. Equa. {\bf 220} (2006), 207--233.

\bibitem{J} A. J\"ungel, \emph{Quasi-Hydrodynamic Semiconductor Equations}, Progress in Nonlinear Differential Equations, Birkh\"auser, Basel, 2001.

\bibitem{JP} A. J\"ungel and Y.-J. Peng, \emph{A hierarchy of hydrodynamic models for plasmas: zero relaxation-time limits}, Comm. Part. Diff. Eqs., 24 (1999), 1007–-1033.

\bibitem{JLi} A. J\"ungel, H.-L. Li, \emph{Quantum Euler-Poisson systems: Global existence and exponential decay}, Quart. Appl. Math. {\bf 62} (2004), 569--600. 

\bibitem{JLM} A. J\"{u}ngel, H-L. Li, A. Matsumura, \emph{The relaxation-time limit in the quantum hydrodynamic equations for semiconductors}, J. Diff. Equa. {\bf 225} (2) (2006), 440--464.

\bibitem{JMR} A. J\"ungel, M.C. Mariani, D. Rial, \emph{Local existence of solutions to the transient quantum hydrodynamic equations}, Math. Mod. Meth. Appl. Sci. {\bf 12} (2002), 485.

\bibitem{LiMarcati} H. Li, P. Marcati, \emph{Existence and asymptotic behavior of multi-dimensional quantum hydrodynamic model for semiconductors}, Comm. Math. Phys. {\bf 245} (2004), 215--247.

\bibitem{JM} A. J\"{u}ngel, D. Matthes, \emph{The Derrida-Lebowitz-Speer-Spohn Equation: Existence, NonUniqueness, and Decay Rates of the Solutions}, SIAM Journal on Mathematical Analysis, {\bf 39}(6), (2008), 1996--2015.

\bibitem{K} I. Khalatnikov, \emph{An Introduction to the theory of Superfluidity}, (2000).

\bibitem{KK} B. Kwak, S. Kwon, \emph{Critical local well-posedness of the nonlinear Schr\"odinger equation on the torus}, Annales de l'Institut Henri Poincar\'e C, (2024)

\bibitem{Kos} M. D. Kostin, \emph{On the Schr\"odinger--Langevin equation}, J. Chem. Phys. 57:3589 (1972); J. Stat. Phys. 12:145 (1975).

\bibitem{LL} L.D. Landau, E.M. Lifshitz, \emph{Course of Theoretical Physics, vol. 6. Fluid Mechanics}, 2nd edition, Elsevier 1987.

\bibitem{LT} C. Lattanzio, A. Tzavaras, \emph{From gas dynamics with large friction to gradient flows describing diffusion theories}, Commun. Part. Diff. Equ. {\bf 42}, no. 2 (2017), 261--290.

\bibitem{LZZ} H.-L. Li, G. Zhang, K. Zhang, \emph{Semiclassical and relaxation limits of bipolar quantum hydrodynamic model for semiconductors}, J. Diff. Equ. {\bf 245} (2008), 1433-–1453.

\bibitem{Mad} E. Madelung, \textit{Quantentheorie in hydrodynamischer form}, Z. Physik {\bf 40} (1927), 322.

\bibitem{MN} P. Marcati, R. Natalini, \emph{Weak solutions to a hydrodynamic model for semiconductors and relaxation to the drift-diffusion equation}, Arch. Rational Mech. Anal. {\bf 129}, 129--145 (1995).

\bibitem{MMS} P. Marcati, A.J. Milani, P. Secchi, \emph{Singular convergence of weak solutions for a quasilinear nonhomogeneous hyperbolic system}, Manuscripta Math {\bf 60}, 49--69 (1988).

\bibitem{Na} A. B. Nassar, \emph{Fluid formulation of a generalized Schrodinger-Langevin equation} J. Phys. A: Math. Gen. {\bf 18} (1985), L509.

\bibitem{PS} L. Pitaevskii, S. Stringari, \emph{Bose-Einstein condensation and superfluidity}, Clarendon Press, Oxford, (2016).


\bibitem{R} W.V. Roosbroeck, \emph{Theory of flow of electrons and holes in germanium and other semiconductors}, Bell. Syst. Techn. J., {\bf 29} (1950), 560--607.

\bibitem{Y} K. Yasue, \emph{A note on the derivation of the Schr\"odinger--Langevin equation}. J. Stat. Phys. {\bf 16}, 113-116 (1977).

\bibitem {HaoMinE} H. Zheng, \emph{The Pauli problem and wave function lifting: reconstruction of quantum states from physical observables}, Math. in Eng. {\bf 6}, no. 4 (2024), 648--675.

\end{thebibliography}
\end{document}